\newcommand{\fmtext}{}%{}%{}
\newcommand{\maartenbluefont}{}%{\color{black}}%{}
\newcommand{\fmrtext}{}%{\color{black}}%{}
\newcommand{\corrtext}{}%{}%{}
\newcommand{\Lambdamap}{X} %{{\fmtext \bf x}}
\newcommand{\Lambdamapc}{{\fmtext X}_c}
\newcommand{\randomYvariable}{{\bf y}}%{{\mathfrak y}}%{{\fmtext \bf Y}}
\newcommand{\yvariable}{y}%{{\fmtext \bf y}}
\newcommand{\randomZvariable}{{\bf z}} %{\mathfrak z}
\newcommand{\newrandomMvariable}{{\bf M}}% 
\newcommand{\newrandomXvariable}{{\bf X}}%
\newcommand{\xxtext}{}
\newcommand{\ytext}{}
\newcommand{\ztext}{}
\newcommand{\zztext}{}%{}%{}
\newcommand{\yytext}{}%\color{magenta}}%{}
\newcommand{\qtext}{}%{\color{black}}%{}
\newcommand{\ptext}{}%{\color{black}}%{}
\newcommand{\ktext}{}%{}%{}
\newcommand{\ftext}{}%{}%{}
\newcommand{\mtext}{}%{}%{}
\newcommand{\rtext}{}%{}
\newcommand{\commentedinfinal}[1]{}
\DeclareMathOperator {\Vol} {vol}
\newcommand{\B}{{\mathcal B}}
\newcommand{\Expec}{\mathbb{E}}
\newcommand{\Prob}{\mathbb{P}}
\newcommand{\R}{\mathbb{R}}
\newcommand{\ND}{\mathcal{M}_{ND}}
\newcommand{\diam}{\operatorname{diam}}
\def\p{\partial}
\def\Rec{\mathcal R} 
\def\wavecap{\textnormal{cap}}
\let\bdy\partial
\newcommand\indicator{\mathbf 1}
\newcommand{\norm}[1]{\left\|#1 \right\|}
\newcommand{\e}{\varepsilon}
\newcommand{\dd}{\operatorname{d} \!}
\newcommand{\C}{\mathbb{C}}
\DeclareMathOperator{\relu}{\operatorname{ReLU}} % relu
\newcommand{\ba}{\begin{eqnarray*}}
\newcommand{\ea}{\end{eqnarray*}}
\newcommand{\baa}{\begin{eqnarray*}}
\newcommand{\eaa}{\end{eqnarray*}}
\def \beq {\begin {eqnarray}}
\def \eeq {\end {eqnarray}}
\def\bra{\langle}
\def\cet{\rangle}                 
\newtheorem{theorem}{Theorem}[section]
\newtheorem{prop}[theorem]{Proposition}
\newtheorem{lemma}[theorem]{Lemma}
\theoremstyle{definition}
\newtheorem{definition}         {Definition}[section]
\newtheorem{remark}{Remark}
\title{Deep learning architectures for nonlinear operator functions and  nonlinear inverse problems}
\author{Maarten V. de Hoop\thanks{Rice University ({\tt
      mdehoop@rice.edu})}
\and Matti Lassas\thanks{University of Helsinki ({\tt
    matti.lassas@helsinki.fi})}
\and Christopher A. Wong\thanks{Rice University ({\tt
    cawong@rice.edu})} }
\date{\today}
\begin{document}

\maketitle

% \setcounter{tocdepth}{2}
% \tableofcontents

\begin{abstract}
We develop a theoretical analysis for special neural network architectures, termed \emph{operator recurrent neural networks}, for approximating nonlinear functions whose inputs are linear operators. Such functions commonly arise in solution algorithms for
inverse boundary value problems. Traditional neural networks treat input data as vectors, and thus they do not effectively capture the multiplicative structure associated with the linear operators that correspond to the data in such inverse problems. We
therefore introduce a new family that resembles a standard neural network architecture, but where the input data acts multiplicatively on vectors. Motivated by compact operators appearing in boundary control and the analysis of inverse boundary value problems for the wave equation, we promote structure and sparsity in selected weight matrices in the network. After describing this architecture, we study its representation properties as well as its approximation properties. We furthermore show that an explicit regularization can be introduced that can be derived from the mathematical analysis of the mentioned inverse problems, and which leads to certain guarantees on the generalization properties. We observe that the sparsity of the weight matrices improves the generalization estimates. Lastly, we discuss how operator recurrent networks can be viewed as a deep learning analogue to deterministic algorithms such as boundary control for reconstructing the unknown wavespeed in the acoustic wave equation from boundary measurements.
\end{abstract}

\noindent 
{\bf AMS classification:}   68T05, 35R30, 62M45.
%\HOX{Improve the abstract: Tell the problem there clearly.}
\smallskip
%\tableofcontents

\noindent
{\bf Keywords:} Inverse problems, neural networks, wave equation,
sparse matrices.

\section{Introduction}

In standard deep learning, the input data are represented by vectors,
and each layer of a deep neural network applies an affine
transformation (a matrix-vector product plus a shift) composed with
nonlinear activation functions. However, for functions for which the
input data are linear operators, vectorizing the input destroys the
underlying operator structure. Functions whose inputs are linear
operators, which we term \emph{nonlinear operator functions}, are
present in a broad class of nonlinear inverse problems for partial
differential equations (PDE). That is, the possible reconstructions
associated with such problems involve nonlinear, nonlocal functions
between spaces of data operators and function spaces of
``images''. {\maartenbluefont Optimality of reconstruction algorithms can
  be studied with statistical decision theory; however, machine
  learning offers data-driven approaches that make such studies
  computationally feasible.}
% remains a fundamentally unanswered question as it would require
% exploiting structure of the data, adaptive probing or acquisition,
% and structure of the components making up reconstruction procedures.

\subsection{Nonlinear operator functions, inverse problems and
  reconstruction}

We focus our attention on \emph{nonlinear operator functions}, meaning
nonlinear functions whose input consists of linear operators, and
whose structure consists of a holomorphic function of an operator
composed with a very regular function. This type of function structure
is found in a variety of existing solution procedures for nonlinear
inverse problems arising from hyperbolic PDEs. The model problem is
reconstruction of, or ``imaging'' the unknown speed {\mtext $c =
  c(x)$} of waves inside a body, based on from boundary measurements. In this
problem, the body is probed by multiple boundary sources, $h$,
generating waves; the waves that come back are measured at the
boundary. The boundary measurements corresponding to an operator ${\fmtext \Lambdamapc}:h
\mapsto {\fmtext \Lambdamapc}(h)$, and the inverse problem of determining
$c$ from ${\fmtext \Lambdamapc}$ is highly nonlinear.  This inverse problem
has been extensively studied, e.g. in \cite{Anderson,belishev1987approach,belishev1992reconstruction,KKL,Katchalov,lassasDuke,LiuOksanen,Oksanen,StU,SU,U}
and the stability of the solution with data containing errors is considered in \cite{AlessandiniSylverster,Anderson,bellassoued2010stability}. The inverse problems for the wave equation with given 
boundary measurements $\Lambdamapc$  corresponds to the case when we observe the complete wave patterns on the 
boundary. This  inverse problem is closely related to the inverse travel time problem where only the first arrival 
times of the waves are observed, see \cite{burago2010boundary,lassas2003semiglobal,PestovUhlmann,stefanov2008boundary,stefanov2016boundary,U}. Even though the underlying physical system, for
  example, the wave equation, is a linear equation, the inverse
  problem of finding the coefficient function of this equation is a
  non-linear problem. {\mtext In general, we consider ${\fmtext
    \Lambdamapc}$ as data given to us and denote it by ${\fmtext
    \Lambdamap} = {\fmtext \Lambdamapc}$.}

Established uniqueness proofs, based on boundary control
\cite{bingham2008, korpela2016, dHkepleyO2018} and scattering control
\cite{caday1, caday2}, for the above mentioned inverse problems lead
to solution procedures that are recursive in the data operator,
$\Lambdamap$. These procedures can be viewed as applying an
operator-valued series expansion in terms of $\Lambdamap$ followed by
some elementary operations such as taking inner products and
divisions. Typically, one starts with a boundary source $h_0$,
measures the wave $\Lambdamap(h_0)$ at the boundary and computes a new
source $h_1$ using both $h_0$ and $\Lambdamap(h_0)$. The process is
iterated to thus produce a sequence of sources that converge to an
optimal source, called a control, which can effectively determine
information about the interior. However, the convergence is typically
very slow while the intrinsic stability of the inverse problem is
poor. Therefore, a natural question is whether the procedures can be
replaced by learned procedures that are adapted to the data, taking
advantage of working on a low-dimensional manifold of linear
operators. The iterative nature of the procedures suggests the
introduction of recurrent neural networks (RNNs). Mathematical
properties of the inverse problems can be used to reduce the number of
weights to be learned. Notably, a crucial feature of boundary control
is that each iteration involves linear operators that smooth source
signals by a finite order, meaning that such operators are compact
operators. The compactness is used in a crucial way in the solution of
the inverse problem. Moreover, when the data operator and operators
appearing in the boundary or scattering control based procedures are
discretized and approximated by finite $n \times n$ matrices, one
obtains good approximations using sparse and low-rank matrices.

The main goal of this paper is to develop a mathematical framework for
supervised learning to solve nonlinear inverse problems, whose
underlying structure is that of nonlinear operator functions. Based on
the structure of known, constructive uniqueness proofs, we introduce
general operator recurrent neural networks that take data in as a
linear operator. We further introduce an explicit
regularization scheme for training such networks based on compactness,
sparsity and rank properties of certain operators embedded in the
network. The result is a principled network architecture for which
crucial analytic features can be controlled tightly. This stands in
contrast to more traditional applications of deep neural networks,
such as computer vision and speech recognition, in which little
mathematical information about the behavior of the underlying
``function'' is known. To highlight the potential of deep learning in
the context of inverse problems, we prove that our type of network,
the weights of which are obtained via training with simulated data,
solves the inverse problems \textit{at least as well as} the
classical, partial-differential-equation based reconstruction
procedures. We analyze the approximation and detailed expressivity
properties of our operator recurrent neural networks, and provide
generalization estimates and rates with increasing training sets to
the best possible network. The universal approximation theorems only
guarantee a small approximation error for a sufficiently large network,
but do not consider the optimization (training) and generalization
errors, which are equally important \cite{Jin2019}.
% \textit{[P. Jin, L. Lu, Y. Tang, and G. E. Karniadakis. Quantifying
%  the generalization error in deep learning in terms of data
%  distribution and neural network smoothness. arXiv preprint
%  arXiv:1905.11427, 2019]}.
From the viewpoint of studying inverse problem, the deep learning framework provides a novel integration of analysis and statistics. In this framework, the architecture is derived from the analysis as a domain adaptive ingredient, while statistical decision theory is used to define what is meant by an ``optimal'' solution method involving regularization with a finite set of training ``data''.

{ Formally, we consider inverse problems of the form $X = F(z)$, where $F$ is a direct operator acting on real-valued {vectors $z$} generating linear operators $X$, and are concerned with determining $z$ given $X$. {The vector $z$ 
models a real-valued function that is
 digitized in $m$ points whereas the functions on which $X$ acts are digitized with $n$ points. Thus we view $z$ as a vector in $\R^m$ and $X$ as a matrix in $\R^{n \times n}$ with $m > n$. We will assume uniqueness. In the digitized framework, we let $z \in B^m(\rho_0)$ and $${\color{black}\mathcal X} = F(B^m(\rho_0)) \subset \mathcal{B}^{n \times n}(\rho_1) ;$$ here, $B^m(\rho_0)$ denotes a ball with radius $\rho_0$ in $\R^m$ equipped with the standard Euclidean norm and $\mathcal{B}^{n \times n}(\rho_1)$ a ball with radius $\rho_1$ in $\R^{n \times n}$ equipped with the operator norm of linear operators $\R^n\to\R^n,$
 that is, $\|X\|_{\R^{n\times n}}=\max_{\|v\|_{\R^n}\leq 1}\|Xv\|_{\R^n}$.
 When the map $F$ is injective and one is given (or measures) the matrix $X$ as data, 
 and this data does not contain errors, one can consider a map
 $H$ that is the left inverse of $F$ on $\operatorname{Ran}(F)={\color{black}\mathcal X} = F(B^m(\rho_0))\subset \R^{n \times n}$, that is,
one consider the sequence
\begin{equation*} 
   B^m(\rho_0)\xrightarrow{\ F\ }{\color{black}\mathcal X}   
   \xrightarrow{\ H }\R^m ,\quad
   H(F(z))=z\ \hbox{for }z\in B^m(\rho_0).
\end{equation*}
However, one has to deal with two challenges: Computing the map $H$ may be difficult and the data $X=F(z)$ may contain errors.
 
We consider a strategy that is rooted in the analysis of inverse problems, when the reconstruction is obtained in two steps. In the first step, one constructs an intermediate quantity, $y \in \R^n$, from $X$, which is typically relatively unstable; this construction may have to be repeated for a variable parameter which, upon discretization, yields $(y^1,\ldots,y^{{T}})$. From $(y^1,\ldots,y^{{T}})$ one then obtains $z$, typically in a stable manner. To formalize this, we assume that there are functions
\beq\label{layers 1}
 &&  \boldsymbol{f} = (f^1,\ldots,f^T): \R^{n \times n} \to (\R^n)^T, \text{ where}\ f^t :\ \R^{n \times n} \to \R^n ,
 \eeq
 and
\beq\label{layers 2}
  & & g :\ (\R^n)^T \to \R^m
\eeq  
 having the property that $g\circ \boldsymbol{f}:\R^{n\times n}\to \R^m$
 % We employ the following notation
% $H = g \circ \boldsymbol{f}$ with
%\ba
% &&  \boldsymbol{f} = (f^1,\ldots,f^T)\ \text{and}\ f^t :\ \R^{n \times n} \to \R^n ,\\ 
% & &f^t(X)=y^t ,\ t = 1,\ldots,T ,
% \\
%  & & g :\ (\R^n)^T \to \R^m ,\ (y^1,\ldots,y^T) \to z .
%\ea
%Here, $H$ 
is an extension of the inverse  map $H$ of $F$ defined on $\mathcal X$, that is,
 we consider the sequence
\beq \label{gfF sequence 1}
& &B^m(\rho_0)\xrightarrow{\ F\ } \R^{n \times n} 
\xrightarrow{\ \boldsymbol{f}\ }(\R^{n})^T \xrightarrow{\ g\ }\R^m,
\\ \label{gfF sequence 2}
 & &g(\boldsymbol{f}(F(z))=z\ \hbox{for }z\in B^m(\rho_0).
\eeq
As discussed above, the intermediate quantities are denoted by
 \ba 
 & &f^t(X)=y^t ,\ t = 1,\ldots,T.
 \ea 
 We will approximate $f^t$ by $f^t_{\theta}$ as an operator recurrent network and $g$ by $g_{\theta}$ as a shallow network with fully connected layers motivated, again, by the analysis of inverse problems. {\color{black} We consider the model, where parameter $\theta$ consists of two parts,
$\theta=(\theta',\theta'')$, and $f_\theta$ depends on $\theta'$ and $g_\theta$ depends on $\theta''$, that is, the parameters determining $f_\theta$ and $g_\theta$ are unrelated.} This structure, as well as the regularization introduced in the later analysis, could be viewed as the inductive bias.

We will consider how the functions $f^t$ and $g$ can be approximated by operator recurrent neural network with appropriately chosen weights. We will also analyze the case when the data are contaminated with noise, $\mathcal{E}$ say, such that $X + \mathcal{E}$ no longer belongs to $\operatorname{Ran}(F)$. {Our goal is to use to use recurrent operator neural networks to find a trainable solution algorithm for an inverse problem so that the architecture is informed by the PDE-based solution methods but in which the measurement noise can be take into account in the training. Moreover, we will show that optimal (general) operator recurrent network under the expected loss can be identified as a Bayes estimator.

\begin{remark}
In the above, the direct map $F$ is an approximation of a map $\mathcal F$ that maps 
between infinite-dimensional Banach spaces and the map $H$ is an approximative inverse of the map $\mathcal F$. In practice, $F$ can be obtained using a numerical discretization, such as the finite element method, to approximate solutions of partial differential equations. When the discretization of the model is taken in to account, the sequence \eqref{gfF sequence 2} needs to be replaced by the sequence
\beq \label{gfF sequence 2 continuous}
 & &g(\boldsymbol{f}(F(z))=I_{app}(z),\quad \hbox{for }z\in B^m(\rho_0),
\eeq
where $\|I_{app}(z)-z\|\leq \epsilon_0$.
However, in this paper we assume that the finite-dimensional approximation of function $F$ is so precise that the 
approximation error $\epsilon_0$ is negligible, and assume that the identity
\eqref{gfF sequence 2} is valid.
\end{remark}
}}

\subsection{Related work}

There has been a substantial amount of progress concerning applying
machine learning techniques to linear or linearized inverse problems,
particularly in the domain of natural image processing.  However,
nonlinear hyperbolic inverse problems are an entirely different class
of problems, see e.g. \cite{caday1, KKL, KLU, Lassas, U} and
references therein. A closely related recent work is
\cite{gilton2019}, in which a neural network is trained as an additive
term to regularize each iteration of a truncated Neumann series as a
way to solve a linear reconstruction task. Our paper also uses
truncated Neumann series as an approximation to the holomorphic
operator function, but the introduced deep learning architecture is
directly adapted from the Neumann series structure rather than
regularizing it. There have been other prior works in the area of
nonlinear inverse problems. In \cite{jin2017}, a deep neural network
is constructed mimicking the structure of the filtered back projection
algorithm for computerized tomography. In \cite{li2018}, neural
networks are used for learning a nonlinear regularization term, also
in the context of tomography. Deep neural networks have further been
employed for inverse scattering problems, such as in
\cite{li2018invscat, khoo2018, wei2019} and other related inverse
problems in \cite{Antholzer, Arridge2019, Arridge1, Bubba, jin2017,
  Lunz}.

{\maartenbluefont Unrolled deep neural network architectures were first
  used to solve optimization problems \cite{gregor2010}, in
  particular, the iterative shrinkage algorithm (ISTA)
  \cite{Daubechies2004}; for a recent review, see \cite{monga2021}. Unrolling is a way to select a domain
  specific architecture for deep neural networks that approximates an
  operator given implicitly by an iterative scheme \cite[Sections~
    4.9.1 and 4.9.4]{Arridge1}. Usage of such architectures for
  solving inverse problems was outlined in \cite{adler2017} and
  \cite{putzky2017},
  while further developments came in \cite{adler2018}.
% In \cite{adler2017,fan2017}, deep neural networks are used to replace
% the iterations within optimization algorithms, such as primal-dual
% methods, for solving inverse problems.
Our work has some similarities to unrolling, as we take an existing
iterative algorithm and use it as the basis for developing a deep
learning strategy.}

A crucial feature of our approach is that properties derived from the
mathematical analysis provide insight as to how to efficiently and
sparsely parametrize the neural network that learns the inverse map.
Such sparsity bounds are important because fully general neural
network models are heavily overparametrized, making them both
difficult to analyze as well as computationally resource intensive.
Reducing the parameter space as a way to improve learning also has
connections to nascent information-theoretic formulations of deep
learning, such as through the information bottleneck method
\cite{tishby2015}. There is a wide array of existing literature on
studying sparsity in neural networks. One popular technique to achieve
sparsity is to take a pre-trained dense network and prune parameters
with low importance; an early example of this technique is
\cite{lecun1990}, with later examples studying pruning including
\cite{frankle2019, han2015, liu2019}. However, it is desirable to
achieve sparsity without needing to first train a dense network.
{\maartenbluefont Indeed, in our work, sparsity bounds are directly
  imposed for the network parameters that encode the linear
  transformations across layers.} Studies of sparsity promotion either
before or during network training include \cite{bellec2018, Bruna,
  mocanu2018, mostafa2019}. {\maartenbluefont As will be seen later,
  sparsity in the network parameters has an interpretation in terms of
  low-rank approximation of the compact operators appearing in the
  original iterative scheme that is unrolled.} The use of low-rank
weight matrices in deep learning has become popular for a variety of
applications; see for example \cite{jaderberg2014, lebedev2014,
  xjli2017, yaguchi2019}. However, these works all exploit low-rank
structure that is empirically found rather than mathematically
derived. {\maartenbluefont Our sparsity bounds also provide improved
  generalization bound. This is independent of (regularization)
  techniques employed to improve upon training
 \cite{kukacka2017}.}

\section{Principled architecture}
\label{sec.DNN}

{ In this section, we focus on the network architecture for $f_{\theta}$ while suppressing the parameter $t$.} This network represents $f$, which is the main component of the inverse map, $H$. The design is domain adapted in the sense that it utilizes structure that the inverse map may possess. We exemplify this with the inverse boundary value problem associated with the wave equation and boundary control in the final section of this paper; however, we expect the architecture to adapt equally well to electrical impedance tomography and the $\bar{\partial}$ method.

\subsection{Operator recurrent architecture}

We define a specialized neural network architecture, the \emph{operator recurrent network}, that we propose as a suitable architecture for learning certain classes of nonlinear functions whose inputs are linear operators and whose outputs are functions. As mentioned in the introduction, we invoke a discretization turning operators into matrices and functions into vectors.

\subsubsection{Standard deep neural network}

To draw a comparison with the operator recurrent architecture we will
introduce shortly, we first define the standard neural network. This
is a function $f_{\theta}: \R^{d_0} \rightarrow \R^{d_L}$ with depth
$L$ and set of weights $\theta$ defined by
\begin{align}
\label{eqn.stdNNdef1}
   f_{\theta}(x) & = h_L ,
\\
\label{eqn.stdNNdef2}
   h_{\ell} & = A^{\ell,0}_{\theta} h_{\ell-1}
     + \phi_{\ell}\left[ b^{\ell}_{\theta}
                 + A^{\ell,1}_{\theta} h_{\ell-1} \right] ,
\\
\label{eqn.stdNNdef3}
   h_0 & = x .
\end{align}
The index $\ell = 0,\ldots,L$ indicates the layer of the neural
network. Each vector $h_{\ell} \in \R^{d_{\ell}}$ is the output of
layer $\ell$, where $d_{\ell}$ is the width of that layer. For each
layer $\ell$, the functions $\phi_{\ell}: \R^{d_{\ell}} \rightarrow
\R^{d_{\ell}}$ are the activation functions, which apply a scalar
function to each component, that is, for $x=(x_j)_{j=1}^{d_{\ell}}\in \R^{d_{\ell}}$,
$\phi_{\ell}(x)=(\phi_{\ell}(x_j))_{j=1}^{d_{\ell}}\in \R^{d_{\ell}}$.

The matrices $A^{\ell,0}_{\theta} \in \R^{d_{\ell} \times
  d_{\ell-1}}$, which typically have an identity matrix as a sub-block,
encode skip connections by passing outputs from layer $\ell-1$ to
layer $\ell$ without being operated on by any activation
functions. The $\R^{d_{\ell}}$-vectors $b_{\theta}^{\ell}$ are the
bias vectors and the $d_{\ell} \times d_{\ell-1}$ matrices
$A^{\ell,1}_{\theta}$ are the weight matrices.  Each of
$b^{\ell}_{\theta}, A^{\ell,0}_{\theta}, A^{\ell,1}_{\theta}$ are
dependent (in a context-specific way) on parameters $\theta$ to be
learned. For example, in the case of convolutional neural networks,
$A^{\ell,1}_{\theta}$ is a block-sparse matrix whose blocks are
Toeplitz matrices, and the parameters $\theta$ determine the values of
the diagonals and off-diagonals of these blocks.

\subsubsection{Operator recurrent network}

While standard neural networks have enjoyed widespread success in
many applications, they are not efficient at approximating functions
that are mathematically known to have a multiplicative
and highly nonlinear structure.
This is because a standard neural network with rectifier activations
is a form of a multivariate linear spline.
For example, approximating even a univariate polynomial to high accuracy
requires a fairly deep neural network \cite{yarotsky2016}.
In nonlinear inverse problems, the situation is even more problematic,
since their structure includes operator polynomials where the
polynomial is of high degree and the operator is discretized
as a large matrix. This situation motivates our new construction.

An \emph{operator recurrent} network has an internal structure
reflecting the linear operator nature of the input by performing
matrix-matrix multiplications, rather than vectorizing the input and
then performing matrix-vector multiplications. To this end, 
{we consider following neural networks.}

\begin{definition}
\label{defn.basicORN}
A \emph{basic operator recurrent network} with depth $L$, width $n$,
and set of weights (or parameters) $\theta$ is defined as a function
$f_{\theta} : \R^{n \times n} \rightarrow {\mtext \R^{n}}$ given by
\begin{align}
\label{eqn.NN1def1}
    f_{\theta}(\Lambdamap) & = h_L,
\\
\label{eqn.NN1def2}
    h_{\ell} & = b_{\theta}^{\ell,0} + A^{\ell,0}_{\theta} h_{\ell-1}
        + B^{\ell,0}_{\theta}\, \Lambdamap \,h_{\ell-1} + 
    \phi_{\ell} \left[ b^{\ell,1}_{\theta}
                       + A^{\ell,1}_{\theta} h_{\ell-1}
        + B^{\ell,1}_{\theta} \, \Lambdamap \,h_{\ell-1} \right],
\end{align}
where $h_0 \in {\mtext \R^{n}}$ is an initial vector not explicitly
given by the data, the quantities
$b^{\ell,0}_{\theta},~b^{\ell,1}_{\theta} \in {\mtext \R^{n}}$ and
$A^{\ell,0}_{\theta}, A^{\ell,1}_{\theta}, B^{\ell,0}_{\theta},
B^{\ell,1}_{\theta} \in \R^{n \times n}$ are dependent on the
parameters $\theta$, and the $\phi_{\ell}$ are the activation
functions.
\end{definition}

We note that $h_{\ell}$ should be viewed as a \emph{hidden state}. The typical initialization of the hidden state is $h_0 = 0$,
though it could be learned as well. This naturally applies in the context of inverse problems; in Section~\ref{sec.IP} the hidden states take the role of boundary controls.

\begin{remark}
{\it We may consider $h_0$ not as part of the initial layer, but instead as
the output of an initial layer whose value is entirely determined by a
bias vector $b_{\theta}^{0,0}$ set to be equal to $h_0$, with all
other terms set to zero.}
\end{remark}

\begin{remark}
The data matrix, $\Lambdamap$, is a digitized counterpart of an
operator. In Section~\ref{sec.IP} we realize this as the outcome of
numerical discretization. However, the digitization may be obtained
through composition with a data acquisition operator, which may be
viewed as a pre-processing operator that can be learned. Learning a
data acquisition scheme has been considered in different contexts 
\cite{Baldassarre-2016, Dardikman-Yoffe-2020, Luijten-2019, Sanchez-2020}.
\end{remark}

\subsubsection{Activation function}

In general, the activation functions $\phi_{\ell}$ may differ at each
layer $\ell$. We choose the form of $\phi_{\ell} :\ \R^n \rightarrow
\R^n$ to be a rectifier (or ReLU). That is, $\phi_{\ell}$ is given by
\begin{equation} \label{eqn.leakyrelu}
   (\phi_{\ell}({\fmtext y}))_j
     = {\fmtext \phi_{\eta}({\fmtext y}_j)=}
     \max({\fmtext y}_j,\eta {\fmtext y}_j), \quad j = 1,\ldots,n ,
\end{equation}
where $0 \le \eta \le 1$ is either a hyperparameter that is chosen in
advance (the ``leaky'' ReLU) or could be a parameter that is learned
during optimization (the ``parametric'' ReLU). In either case, this
choice of activation function is a piecewise-linear function on each
vector component.

The choice of the rectifier as the activation function has both
pragmatic and mathematical reasons. Indeed, in the case of standard
deep neural networks with $\eta = 0$, there is significant empirical
evidence indicating that the use of the rectifier activation function
promotes sparsity and accelerates training \cite{glorot2011,maas2013}.
Rectifier networks are also closely connected with piecewise-linear
splines, which are known to interpolate data points while minimizing
the second-order total variation \cite{unser2017, unser2018}. In
Section~\ref{ssec.piecewisepoly}, we will show that in our case such
activations induce piecewise (operator) polynomial behavior.

We note that a network of the form
\eqref{eqn.NN1def1}-\eqref{eqn.NN1def2} with activation functions
being rectifiers with leaky parameter $\eta > 0$ can have its
activation functions replaced, without loss of generality, by standard
rectifier activation functions ($\eta = 0$). We let $\phi_{\eta}$ be the
activation function in \eqref{eqn.leakyrelu}. Then we can write
\begin{equation}
    \phi_{\eta} = \eta \mathrm{Id} + (1 - \eta) \phi_0,
\end{equation}
where $\mathrm{Id}$ is the identity map and {\mtext the activation function $\phi_0$ is the standard rectifed linear unit (relu)}. Then, starting with \eqref{eqn.NN1def2}, we
have
\begin{align}
   h_{\ell} & = b_{\theta}^{\ell,0} + A^{\ell,0}_{\theta} h_{\ell-1}
      + B^{\ell,0}_{\theta}\, \Lambdamap \,h_{\ell-1} + 
        \phi_{\eta} \left[ b^{\ell,1}_{\theta}
      + A^{\ell,1}_{\theta} h_{\ell-1}
      + B^{\ell,1}_{\theta}\, \Lambdamap \,h_{\ell-1} \right]
\nonumber\\
   & = (b_{\theta}^{\ell,0} + \eta b_{\theta}^{\ell,1})
      + (A^{\ell,0}_{\theta} + \eta A^{\ell,1}_{\theta}) h_{\ell-1}
      + (B^{\ell,0}_{\theta}
                     + \eta B^{\ell,1}_{\theta})\, \Lambdamap\, h_{\ell-1}
\nonumber\\
   &\quad + (1 - \eta) \phi_0 \left[b^{\ell,1}_{\theta}
      + A^{\ell,1}_{\theta} h_{\ell-1}
      + B^{\ell,1}_{\theta}\, \Lambdamap \,h_{\ell-1} \right],
\label{eq:ORNNl}
\end{align}
and thus an operator recurrent network with $\eta > 0$ can be replaced
by another one with $\eta = 0$ by relabeling some of the biases and weights.

\subsubsection{Recurrence}

By inspecting \eqref{eqn.NN1def1}--\eqref{eqn.NN1def2}, we observe
that the input data $\Lambdamap$ is inserted multiplicatively into the
network at every layer, so that each computed intermediate output
$h_{\ell}$ depends both on $\Lambdamap$ and previous intermediate
outputs $h_{\ell-1}, h_{\ell-2}, \ldots$ in an identical fashion for
each $\ell$. In the finite-dimensional setting, such expansions can be
viewed as matrix polynomials, and each layer can be thought of as
performing another stage of an iteration in which the degree of the
polynomial is raised through multiplication by the matrix
variable. Thus the neural network learns nonlinear perturbations of
this process at each iteration.

There may be a reason to expect that every iteration not only has the same structure, but is in fact identical. For example, this holds true for nonlinear operator functions given by a truncated Neumann series. Thus, operator recurrent networks can also be interpreted as the unrolling of an iterative nonlinear process, where the recurrence refers to the fact that the output of each layer is fed back into another layer that may have the same weights.

\begin{figure}
% \centering
% \includegraphics[width = 0.9\textwidth]{iterativediagram5.pdf}
\hspace*{1.0cm}
\includegraphics[width = 0.65\textwidth, angle=0]{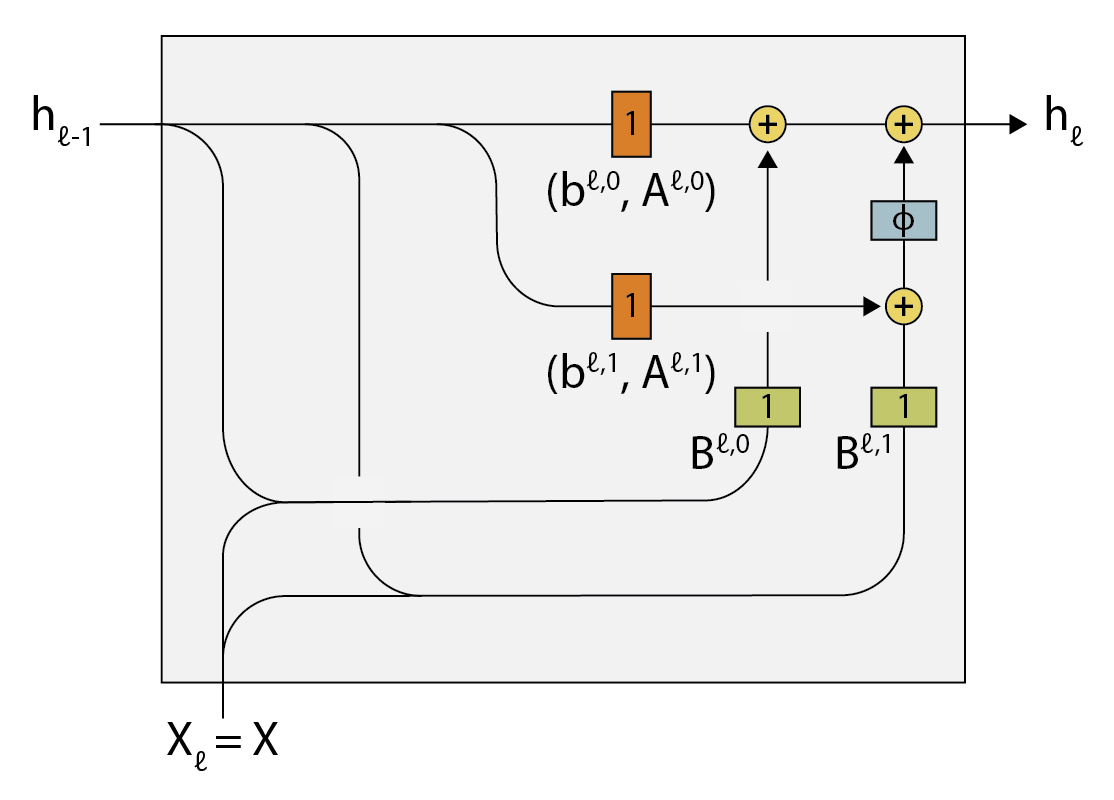} \\[-4.5cm]
\hspace*{1.0cm}
\includegraphics[width = 0.85\textwidth, angle=0]{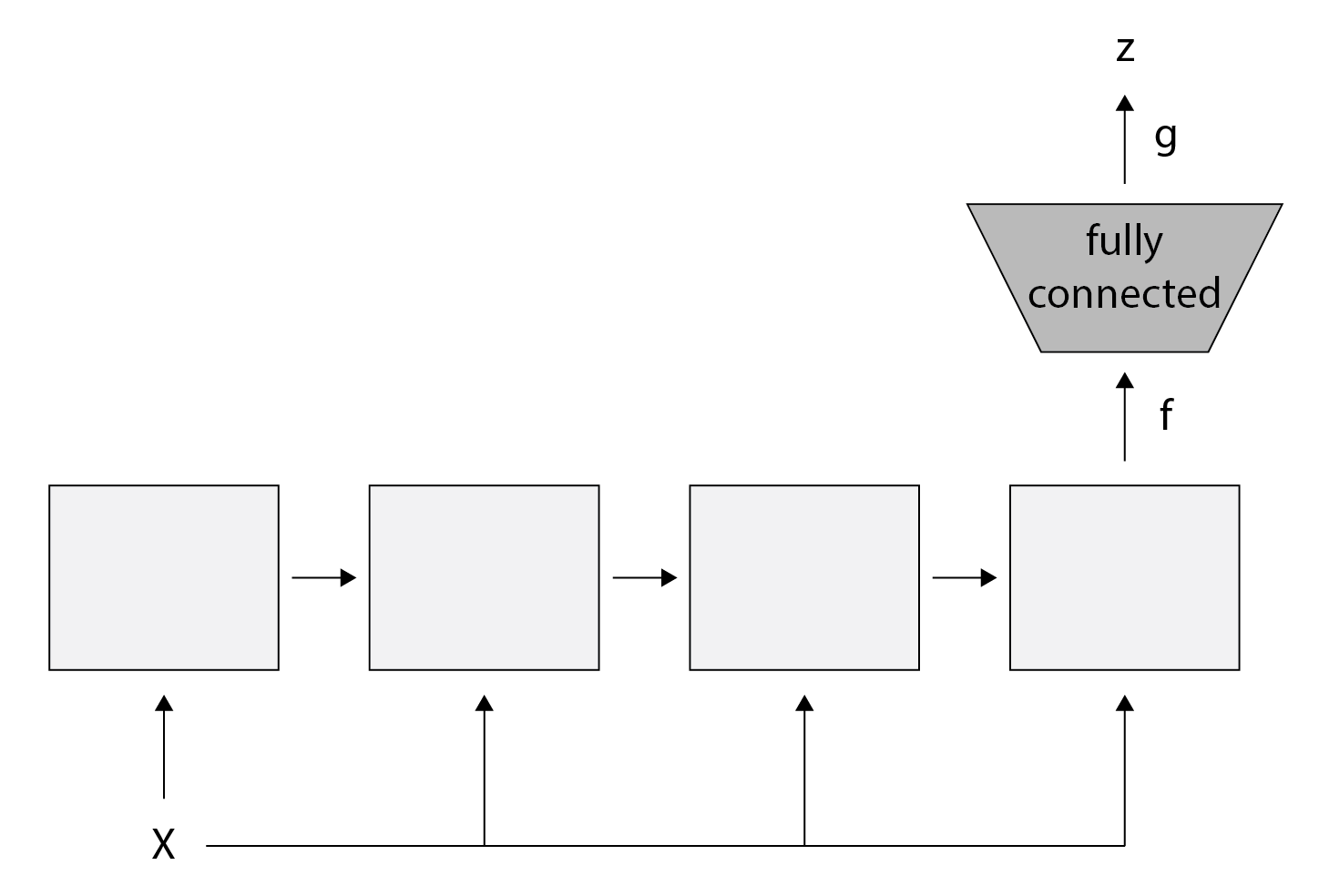}
\caption{Cell (top) of the operator recurrent network (bottom) architecture. When concatenated with a feed-forward network consisting of a few fully connected layers, the network adapts to inverse problems. The data operator $\Lambdamap$ is inserted multiplicatively into the network at each cell. The initial hidden state $h_0$ is typically chosen to be zero.}
\label{fig.DNN}
\end{figure}

\subsection{General operator recurrent networks}

A general operator recurrent network is obtained from a basic operator recurrent network merely by adding memory.

\medskip

\begin{definition}
A \emph{general operator recurrent network} { of level $K$} is an extension of the
basic operator recurrent network, including terms that contain
$h_{\ell-k}$ in the expression for $h_{\ell}$, that is,
\begin{eqnarray} \label{eqn.NN1def1 exte3dnde}
   f_{\theta}(\Lambdamap) &=& h_L ,
\\
\label{eqn.NN1def2 exte3dnde}
   h_{\ell} &=& b_{\theta}^{\ell,0} +
\sum_{k=1,...,K;i=0}(
     A^{\ell,k,i}_{\theta} h_{\ell-k}
        + B^{\ell,k,i}_{\theta}\, \Lambdamap\, h_{\ell-k}) 
\\
   & &\hspace*{2.5cm}
   +  \phi_{\ell} \left[ b^{\ell,1}_{\theta}+ \sum_{k=1,...,K;i=1}
                       (A^{\ell,k,i}_{\theta} h_{\ell-k}
        + B^{\ell,k,i}_{\theta}\, \Lambdamap \,h_{\ell-k}) \right],
\nonumber
\end{eqnarray}
for $\ell \ge 1$, where $h_0 \in {\mtext \R^{n}}$ is some initial
vector not explicitly given by the data, that is, { the initial hidden state}, $h_{-k}=0$ for $-k<0$, and the quantities $b^{\ell,0}_{\theta},~b^{\ell,1}_{\theta} \in {\mtext \R^{n}}$ and $A^{\ell,k,i}_{\theta}, 
B^{\ell,k,i}_{\theta} \in \R^{n \times n}$ are dependent on the parameters $\theta$, and the $\phi_{\ell}$ are the activation functions.
\end{definition}

\medskip

\noindent
Basic and general operator recurrent networks can be further generalized upon replacing vectors $h_{\ell}$ and biases in $\R^n$ by sets of $r$ vectors, that is, matrices in $\R^{n \times r}$. We will not consider this in the analysis.

In the general operator recurrent network \eqref{eqn.NN1def1 exte3dnde}-\eqref{eqn.NN1def2 exte3dnde}, the dependency of $h_\ell$ on previous
outputs $h_{\ell-m}$ for $m > 1$ is an explicit way to encode skip connections, which feature prominently in applications of standard
neural networks \cite{ronneberger2015,he2016}. In standard neural networks, however, similar generalizations are fully included in the
basic definition since they can be implemented by increasing the width of the network. However, in operator recurrent networks, the width is
fixed and so this generalization must be explicitly included. In the following discussions, however, the basic definition \eqref{eqn.NN1def1}-\eqref{eqn.NN1def2} is sufficient as discussed in the example below.

A general operator recurrent network can be written as a basic operator recurrent network by extending the width of the network. We show this
explicitly starting from \eqref{eqn.NN1def1 exte3dnde}-\eqref{eqn.NN1def2 exte3dnde}. Let $\tilde h_\ell=(h_{\ell},\dots,h_{\ell-K-1})^T\in \R^{nK}$ where $h_{-i}=0$ for $i>0$. Also, let
\beq
  \tilde A^{\ell,i}_{\theta}  =\left(\begin{array}{ccccc}   A^{\ell-1,1,i}_{\theta} & A^{\ell-1,2,i}_{\theta}&\dots 
  & A^{\ell-1,K-1,i}_{\theta}&A^{\ell-1,K,i}_{\theta} \\
  I  & 0& \dots&0 &0 \\
  0  & I& \dots&0 &0 \\
  \vdots &\vdots & &\vdots &\vdots\\
  0  & 0& \dots  &I&0 \\
  \end{array}\right)
\eeq
and
\beq
  \tilde B^{\ell,i}_{\theta}  =\left(\begin{array}{ccccc}   B^{\ell-1,1,i}_{\theta} & B^{\ell-1,2,i}_{\theta}&\dots 
  &B^{\ell-1,K-1,i}_{\theta}&B^{\ell-1,K,i}_{\theta} \\
  I  & 0& \dots&0 &0 \\
  0  & I& \dots&0 &0 \\
\vdots &\vdots & &\vdots &\vdots\\
0  & 0& \dots  &I&0 \\
\end{array}\right),
\eeq
for $i=1,2$ and $\tilde b_{\theta}^{\ell,i}=(b_{\theta}^{\ell-1,i},\dots,b_{\theta}^{\ell-K,i})^T\in \R^{nK}$, for $i=1,2$.
Also, let $\tilde \Lambdamap=\hbox{diag}( \Lambdamap,\dots, \Lambdamap)\in  \R^{nK\times nK}$. 
Then  the general operator recurrent network $f_{\theta}$ given in \eqref{eqn.NN1def1 exte3dnde}-\eqref{eqn.NN1def2 exte3dnde} can be written as a basic operator recurrent network $\tilde f_{\theta}: \R^{nK\times nK}\to  \R^{nK}$ given by
\begin{align} \label{eqn.NN1def1 B}
   \tilde f_{\theta}(\tilde\Lambdamap) & = \tilde h_{L},
\\
\label{eqn.NN1def2 B}
   \tilde  h_{\ell} & = \tilde b_{\theta}^{\ell,0} + \tilde A^{\ell,0}_{\theta}\tilde  h_{\ell-1}
        + \tilde B^{\ell,0}_{\theta} \tilde \Lambdamap\tilde h_{\ell-1} + 
    \phi_{\ell} \left[ \tilde b^{\ell,1}_{\theta}
                       + \tilde A^{\ell,1}_{\theta} \tilde h_{\ell-1}
        +\tilde B^{\ell,1}_{\theta} \tilde \Lambdamap \tilde h_{\ell-1} \right],
\end{align}
and setting $f_{\theta}(\Lambdamap)=\Pi_n(\tilde
f_{\theta}(\tilde\Lambdamap))$. Here, $\Pi_n:\R^{nK}\to \R^{n} $ is the
operator \newline $\Pi_n(y_1,y_2,\dots,y_{nK})= (y_1,y_2,\dots,y_{n})$. Also,
we observe that $\|\tilde \Lambdamap\|_{\R^{nK}\to \R^{nK}}=\|
\Lambdamap\|_{\R^{n}\to \R^{n}}$.

We can contrast this construction with the standard neural network
definitions \eqref{eqn.stdNNdef1}-\eqref{eqn.stdNNdef3}. In the
standard neural network, a vector $x$ is the input, and the
intermediate outputs $h_{\ell}$ at each layer $\ell$ are produced by
repeatedly applying matrix-vector products as well as activation
functions in some order. In contrast, in the operator recurrent
network, the input is a matrix $\Lambdamap$, and it is multiplied on both
the left and right by matrices. At the first layer, this is still
equivalent to a standard neural network, since the action of a matrix
on another matrix is linear. However, at all subsequent layers, this
is no longer equivalent, since the matrix $\Lambdamap$ is re-introduced
at each layer and is multiplied with the previous output $h_{\ell-1}$.
\medskip

\begin{remark} \label{rem: other networks}
{\it A standard ``additive'' neural network (cf.~(\ref{eqn.stdNNdef1})-\eqref{eqn.stdNNdef3}) with input $x \in \R^n$ can be written as a general operator recurrent network \eqref{eqn.NN1def1 exte3dnde}-\eqref{eqn.NN1def2 exte3dnde} as follows. We set
$\Lambdamap = \hbox{diag}(x_1, \dots, x_n)$, $h_0 = {\bf 1} = (1, 1, \ldots, 1)^T$ and let $K = 1$. For $\ell = 1$ we choose the weight matrices to be
$B^{1,1,0}_{\theta} = A^{1,0}_{\theta}$ and 
$B^{1,1,1}_{\theta} = A^{1,1}_{\theta}$ and $A^{1,1,i}_{\theta} = 0$, $i=0,1$;
for $2 \le \ell \le L$ we choose the weight matrices to be
$A^{\ell,1,0}_{\theta} = A^{\ell,0}_{\theta}$,
$A^{\ell,1,1}_{\theta} = A^{\ell,1}_{\theta}$ and $B^{\ell,1,i}_{\theta} = 0$, $i = 0,1$.}
\end{remark}

To simplify notation, in particular the indexing of variables, we consider mostly basic operator recurrent networks, that is, the case $K = 1$. However, the results can be straightforwardedly generalized to general operator recurrent networks. The general operator recurrent network will play a fundamental role in Theorem~\ref{not linear operator approximation theorem} only.

\subsection{Sparse representation of trained matrices}

Next, we specify how the biases and weights depend on the parameters
$\theta$. In a typical fully-connected layer for a standard neural
network, $\theta$ determines the entries of the biases and
weights. More precisely,
\begin{equation} \label{alternative 1}
    b_{\theta}^{\ell} = \theta^{\ell}_0, \quad 
    A^{\ell,1}_{\theta} = \begin{bmatrix} \theta^{\ell}_1 &
    \theta^{\ell}_2 & \ldots & \theta^{\ell}_{d_{\ell}}
                        \end{bmatrix} ,
\end{equation}
where
\begin{equation*}
   \theta = \left\{ \theta^{\ell}_p{\mtext \in \R^{d_{\ell+1}}} :
     \ell = 1, \ldots, L,
                       p = 0, \ldots, d_{\ell} \right\}
\end{equation*}
stands for a set of column vectors. We consider the parametrization of basic operator recurrent networks in terms of $\theta$. The matrices $A^{\ell,{}i}_{\theta}, B^{\ell,{}i}_{\theta}$ in \eqref{eqn.NN1def2} could depend on $\theta$ similarly to \eqref{alternative 1}. However, in our analysis, it is beneficial to provide an alternative quadratic dependence: For each $\ell$ and $i
= 0, 1$ there are {\mtext $4n$} column vectors $\theta^{\ell,{}i}_1, \ldots, \theta^{\ell,{}i}_{4n} \in \R^n$ within the parameter set $\theta$ such that for $i = 0, 1$,
\begin{equation} \label{eqn.Cmatrixdecomp}
 {\mtext  A^{\ell,{}i}_{\theta} =A^{\ell,{}i,(0)}+A^{\ell,{}i,(1)}_\theta,\quad
   A^{\ell,{}i,(1)}_\theta= \sum_{p=1}^n
           \theta^{\ell,{}i}_{{\qtext 2p-1}} (\theta^{\ell,{}i}_{2p})^T,}
\end{equation}
{\mtext and similarly for $B^{\ell,{}i}_{\theta}$, 
\begin{equation} \label{eqn.Cmatrixdecomp B}
{\mtext B^{\ell,{}i}_{\theta} = B^{\ell,{}i,(0)} + B^{\ell,{}i,(1)}_\theta,
        \quad
   B^{\ell,{}i,(1)}_\theta= \sum_{p=n+1}^{2n}
           \theta^{\ell,{}i}_{{\qtext 2p-1}} (\theta^{\ell,{}i}_{2p})^T.}
\end{equation}
Each $A^{\ell,{}i,(0)}$ and $B^{\ell,{}i,(0)}$ is} a fixed operator
that does not depend on parameter $\theta$ and is ``handcrafted''. The resulting deep neural network is illustrated in Figure~\ref{fig.DNN}. The { fixed} operators are typically the zero operator or the identity operator, but they can be also other operators that are chosen depending on the specific application. Examples of such operators suitable for solving the inverse problem for the wave equation are considered later in Section~\ref{sec.IP}, in particular the discussion below \eqref{alternatives for the fixed operators}.

\begin{remark}
{\it Following Remark~\ref{rem: other networks}, choosing for $2 \le \ell \le L$ the weight matrices to be $A_{\theta}^{\ell,1,i,(0)} = I$ and $B_{\theta}^{\ell,1,i} = 0$, $i = 0,1$, we obtain a residual network \cite{he2016}.}
\end{remark}

We now assume that the matrices $A^{\ell,{}i,(0)}$ and $B^{\ell,{}i,(0)}$  and the bias vectors satisfy
\beq \label{norm estimate for fixed}
   { \sum_{i=0}^1 (\|A^{\ell,{}i,(0)}\|_{\R^n\to\R^n} + \|B^{\ell,{}i,(0)}\|_{\R^n\to\R^n}
   + |b_{\theta}^{\ell,i}|)}
%   + |b_{\theta}^{\ell,1}| 
   \le  c_0 ,
\eeq
for some $c_0 \ge 1$. The lower bound, $1$, arises as we allow 
the relevant non-learned matrices to be identity matrices. This makes possible the ResNet-type architectures that contain layers $h\to \phi(h+A^{\ell,(1)}_\theta  h+B^{\ell,(1)}_\theta X h)$ or $h\to h+\phi(A^{\ell,(1)}_\theta  h+B^{\ell,(1)}_\theta X h)$.

We parametrize the bias vectors by $b_{\theta}^{\ell,i} = \theta^{\ell,1,i}_0 \in \R^n$, $i=0,1$. With these notations $f_\theta$ is determined by the set of parameters $\theta$ that is given as an ordered sequence
\begin{multline} \label{thetavector}
   \theta = [\theta^{\ell,{}i}_p\in \R^n :\
       \ell=1,2,\dots,L,\ p=1,2,\dots,4n,\  %k=1,2,\dots,K,\
         i=0,1]
\\
   \cup [\theta^{\ell,{}i}_0 \in \R^n: \ \ell=1,2,\dots,L,\ i=0,1].
\end{multline}
We denote the index set in the above sequence by 
\beq\label{index set P}
& & P=P_1\cup P_2,\\ \nonumber
& &P_1=
\{(\ell,{}i,p):\   \ell=1,2,\dots,L,\
 i=0,1,\ p=1,2,\dots,4n \},\\% k=1,2,\dots,K,\ \},\\
& &P_2=  \nonumber
\{(\ell,{}i,p):\   \ell=1,2,\dots,L,\ %k=1,\ 
i=0,1,\ p=0\}.
\eeq
We note that the indices in $P_1$ are related to the learnable weight matrices, $A_\theta^{\ell,{}i,(1)}$ and $B_\theta^{\ell,{}i,(1)}$ of the basic operator recurrent network, and the indices in $P_2$ are related to bias vectors. Below, we use the fact that $P_1$ has $\# P_1\leq 4n{}L$ elements, and $P_2$ has $\# P_2 \leq 2L$ elements. We note that $\# P_2$ is significantly smaller than $\# P_1$ and that $\# P_2$ is independent of $n$.

{ For the general recurrent operator  networks we add the index $k=1,\dots, K$ and 
replace the above parameters by 
the ordered sequences
\begin{multline} \label{generalthetavector}
\tilde    \theta = [\tilde  \theta^{\ell,i,k}_p\in \R^n :\
       \ell=1,2,\dots,L,\ p=1,2,\dots,4n,\  k=1,2,\dots,K,\
         i=0,1]
\\
   \cup [\tilde  \theta^{\ell,i,0}_0 \in \R^n: \ \ell=1,2,\dots,L,\ i=0,1].
\end{multline}
Also, for the general recurrent operator networks 
we denote the index set in the above sequence by 
\beq\label{generalindex set P}
& & \tilde P=\tilde  P_1\cup \tilde  P_2,\\ \nonumber
& &\tilde  P_1=
\{(\ell,i,p,k):\   \ell=1,2,\dots,L,\
 i=0,1,\ p=1,2,\dots,4n,\ k=1,2,\dots,K\},\\
& &\tilde P_2=  \nonumber
\{(\ell,i,p,k):\   \ell=1,2,\dots,L,\ %k=1,\ 
i=0,1,\ p=0,\ k=0\}.
\eeq

Next, we return to considering the basic recurrent operator networks.} From a (numerical) linear algebra viewpoint, the decomposition
\eqref{eqn.Cmatrixdecomp} expresses the matrix
$A_{\theta}^{\ell,{}i,(1)}$ as a sum of rank $1$ matrices, similar to
a singular value decomposition. This structure is valuable for our
analysis, since it means that we essentially learn a factorization of
these matrices rather than the explicit matrix elements. We will exploit that in Subsection~\ref{ssec:convexreg} while introducing low-rank structures.

\begin{remark}
{\it In the above, the parameters in each layer are allowed to be different
and independent. However, it is natural to consider the case that a
subset of parameters is shared across layers. We will analyze the
impact of shared weights in various estimates below.}
\end{remark}

\subsection{Approximation properties}

\subsubsection*{Estimates for nonlinear operator functions in the holomorphic calculus}

Here, we establish the approximation power of operator recurrent networks, within a certain space of general nonlinear operator functions.  We begin by studying the approximation of functions mapping the linear operator $\Lambdamap: \R^n \to \R^n$ to another linear operator $q(\Lambdamap) :\ \R^n \to \R^n$. This map $q$ is holomorphic in $\Lambdamap$  { and it is defined by the fundamental formula of holomorphic operator calculus \textup{\cite{rudin1991}},
\beq 
q(\Lambdamap)=\frac 1{2\pi i}\int_\gamma q(z)\,(\Lambdamap-z)^{-1}dz,
\eeq
where $\gamma \subset \mathbb C$ is a circle having radius larger than the norm of $\Lambdamap$, oriented in the positive direction.} We contract the operator $q(\Lambdamap)$ with a vector $v \in \bar B^n(1)$, where $\bar B^n(R)$ is the closed ball of radius $R > 0$. In the context of inverse problems and reconstruction, $q(\Lambdamap)$ is often polynomial. To emphasize this context, we write $f(\Lambdamap)$ for $q(\Lambdamap) v$. {An example of a neural network based on holomorphic operator calculus is considered in Section \ref{sec: Example of matrix inversion}.}
%[Can we relate $q(\Lambdamap) v$ to boundary control, that is, to $h_{\infty}$ holomorphic in $\Lambda$? - Matti's comment: If we would use $\ell^2$ regularisation norm in BC method to find the good boundary sources as we do in Korpela et al, we whould have that $h_{\infty}$ is the limit of a Neumann series and thus of the form $q(\Lambdamap) v$. However, now when we use $\ell^1$-regularisation so that we get Relu-networks, we can not write $h_{\infty}$ as  $q(\Lambdamap) v$]} 
We then consider a map, $H$, obtained from the composition with a nonlinear, smooth function $g :\ \R^n \to \R^m$.

Below, we use the norms
\beq
   \|g\|_{\C^k(\bar B^n(r);\R^m)}=
   \max_{y \in \bar B^n(r)}\max_{|\alpha|\leq k}\| D^\alpha g (y)\|_{\R^m}, \quad
   \|f\|_{C({\fmtext\mathcal B^{n\times n}};\R^m)}=
   \max_{\Lambdamap \in \mathcal B^{n\times n}}\|f(\Lambdamap)\|_{\R^m} ,
\eeq
where $ D^\alpha g (y)=(\frac {\p}{\p y_1})^{\alpha_2}(\frac {\p}{\p y_2})^{\alpha_2} \dots (\frac {\p}{\p y_n})^{\alpha_n})g(y)$, $\alpha = (\alpha_1,\alpha_2,\dots,\alpha_n) \in \mathbb N^n$  and $|\alpha| = \alpha_1+\alpha_2 + \dots + \alpha_n$. We denote the linear operator norm by $\|\Lambdamap\|_L = \|\Lambdamap\|_{\mtext \R^n \to \R^n}$ and recall that $\mathcal B^{n\times n} = \{\Lambdamap\in \R^{n\times
  n}:\ \|\Lambdamap\|_{\mtext \R^n \to \R^n}\le 1\}$ is the closed unit ball in the set of matrices.
  
\begin{theorem} \label{thm.apprx1}
Consider a nonlinear operator function $H :\ \mathbb R^{n \times n}
\to \R^m$, defined by
\begin{equation} \label{eqn.smooth+hol}
   H :\ \Lambdamap \mapsto g({ v}^\top q(\Lambdamap) ) ,
\end{equation}
where $q$ is obtained using the holomorphic operator calculus, $v \in \bar B^n(1)$
and $g : \R^n \to \R^m$, satisfying
\begin{itemize}
\item
$q$ is a holomorphic function whose domain contains a complex
  disk ${\fmtext\mathbb D}_{1+r_1}$ having radius $1 + r_1 > 1 + r$
  centered at the origin, for some $r \in (0,1)$,
\item
$g \in C^k(\bar{B}^n(2);\R^m)$ for some $k \geq 1$, and
\item
$q$ and $g$ are both bounded by $1$.
\end{itemize}
Let $\varepsilon \in (0,1)$. Then there exists a {general} operator
recurrent network, $H_{\theta}$, which depth $L \le L_0$, level $K = 2$, and width $W
\le W_0$, and constant $C = C(k,n,r)$ such that
\begin{equation}
   \| H - H_{\theta} \|_{C({\ktext
      {\fmtext\mathcal B^{n \times n}}};\R^m)} \le \varepsilon
\end{equation}
with
\begin{equation} \label{eqn.Ldepth}
   L_0 = C \bigg( \log \bigg(
   \frac{4 \|g\|_{C^1({\ktext \bar B^n{\ptext (2)}}) }}{r \varepsilon}
           \bigg)
   + \log \bigg( \frac{{\qtext 4}^{k+1}
     \|g\|_{ C^k({\ktext \bar B^n{\ptext {\ptext (2)}}})}}{\varepsilon}
     \bigg) + 1 \bigg)
\end{equation}
and
\begin{equation} \label{eqn.Wwidth}
   W_0 = C {\ftext m}n \bigg(\frac {\varepsilon}{{\qtext 4}^{k+1}
     \|g\|_{ C^k({\ktext \bar B^n{\ptext (2)}})}} \bigg)^{-n/k} \bigg(
   \log \bigg( \frac {{\qtext 4}^{k+1}
     \|g\|_{ C^k({\ktext \bar B^n{\ptext (2)}})}}{\varepsilon} \bigg)
               + 1 \bigg) .
\end{equation}
\end{theorem}

\begin{proof}
{In the proof we will first estimate how to approximate a holomorphic function of an operator by a polynomial and
represent the obtained polynomial as a general operator recurrent network. After this we adapt Yarotsky's results on quantified approximation of a function pointwise by a deep neural network and represent the obtained network as a recurrent operator network.

To prove the claim, we first}
approximate
$q(\Lambdamap)$ locally by a polynomial $P(\Lambdamap)$.  As $q$ is
holomorphic on {\ytext some disc ${\fmtext\mathbb D}_{1+r_1}$, where
  $r_1 > r > 0$} and bounded by $1$, its derivatives at zero satisfy
%{ \color{red} [TO DO: change the index $k$ to another letter, as $k$ is used to indicate the regularity of $g$]}
\begin{equation}
   q^{({j})}(0) = \frac{{j}!}{2\pi i}
               \int_{|z|=1+r} \frac {q(z)}{z^{{j}+1}}dz
\end{equation}
and, hence, {\maartenbluefont using that $\| q \| \le 1$}, its Taylor
coefficients at zero satisfy
\begin{equation}\label{eq ak}
   a_{j} = \frac 1{{j}!}q^{({j})}(0) ,\quad
         |a_{j}| \leq \frac{1}{(1+r)^{{j}+1}}.
\end{equation}
Thus we have the Taylor polynomial
\begin{equation}\label{P Taylor}
   P(z) = \sum_{{j}=0}^\ell a_{j} z^{j},
\end{equation}
which satisfies for $|z| \le 1$
\begin{equation}
   |q(z) - P(z)| \leq \sum_{{p}=\ell+1}^\infty \frac{1}{(1+r)^{{p}+1}}
                 \leq \frac {(1+r)^{\ktext -\ell-1}}{r}.
\end{equation}
Hence, if $q(\Lambdamap)$ is defined using the holomorphic functional
calculus, then it can be approximated by the matrix polynomial
$P(\Lambdamap)$, with
\begin{equation}
\label{eqn.thmapprx1-polyapprx}
   \|{q}(\Lambdamap) - P(\Lambdamap)\|_{\C^n \to \C^n }
              \leq \frac {(1+r)^{-\ell}}{r} = \varepsilon_0.
\end{equation}
{\ktext Given $\varepsilon_0<r$, we   choose  $\ell$ to be 
\begin{equation} \label{net 2 i}
   \ell = 1 + \left\lfloor \frac{\log ( ({r}{\varepsilon_0})^{-1})}{\log (1+r)} \right\rfloor ,
\end{equation}
where $\lfloor\ \cdot\ \rfloor$  is the integer part of the real number $s$.
From the discussion in the previous section, it is thus possible to
exactly represent the map
\begin{equation}
   \Lambdamap \mapsto { v^\top P(\Lambdamap)=}
      P(\Lambdamap) v,
\end{equation}
see {\color{black} \eqref{P Taylor}, that approximates
the map $\Lambdamap \mapsto  v^\top q(\Lambdamap)$,} using a general operator recurrent network 
\ba
& &P(\Lambdamap) v=h_{2\ell+3},\\
& &h_0=0,\\
& &h_{1}=v,\\
& &h_{2j}=a_{j-1}h_{2j-1}+h_{2j-2}.\quad j=1,2,\dots,\ell+1\\
& &h_{2j+1}=Xh_{2j-1},\quad j=1,2,\dots,\ell+1
%& &h_{3j+2}=a_jXh_{3j+2,\\
\ea
(so that $h_{2j+1}=X^jv$ and $h_{2j+1}=\sum_{{p}=0}^{j-1} a_{p} X^{p}v$)
of depth $2\ell+3$ and level 2, and whose hidden states are
vectors in $\R^n$ and weight matrices are $n \times n$.}

Next, we consider the network approximation of $g$. First, we note that in the exact nonlinear function $f$, the function $g$ takes in a vector $q(\Lambdamap){ v}$ whose norm is bounded by $1$, since $|q(z)| \le 1$ on the closed disk {\ktext of radius $1+r$}, and $\|{ v}\| \le 1$. Thus we are in the setting of approximating a nonlinear function ${\ftext g:}\R^n \rightarrow \R^n$ uniformly by neural networks on a bounded domain. By Remark~\ref{rem: other networks}, the standard neural network (\ref{eqn.stdNNdef1})-\eqref{eqn.stdNNdef3} can be written as a general operator recurrent network \eqref{eqn.NN1def1 exte3dnde}-\eqref{eqn.NN1def2 exte3dnde}, and thus to consider the approximation of $g$  we can use the results for standard neural networks. Such approximation problems have been studied in a wide variety of settings. Here, we use the results of Yarotsky \cite{yarotsky2016} applied to the function
\begin{equation}
  g_1(y) = \frac{g(4y)}{4^k \|g\|_{C^k({\ktext \bar B^n(2)})}}.
\end{equation}
This normalization is
such that $\|g_1\|_{C^k({\ptext \bar B^n(\ytext 1/2}))} \le 1$,
where the domain of $g_1$ is a ball in $\R^n$ of {\ytext radius $1/2$}.
With this normalization, then by Theorem~1
of \cite{yarotsky2016}, there exists a constant $C =
C(k,n)$ such that a standard {\ftext additive} neural network $G$ exists, satisfying
\begin{equation}\label{first approximation}
   \|g - G\|_{L^\infty({\ptext \bar B^n(2}))} \leq \varepsilon_1,
\end{equation}
and the depth $L'$ and width $W'$ of $G$ satisfy
\begin{eqnarray}\label{net 1} 
& &L' \leq C\bigg( \log \bigg(\frac {{\qtext 4}^{k}
           \|g\|_{ C^k({\ktext \bar B^n{\ptext (2)}})}}{\varepsilon_1} \bigg) + 1 \bigg),
\\ \label{net 2}
& &W' \leq C {\ftext m} n \bigg(\frac {\varepsilon_1}{{\qtext 4}^{k}
           \|g\|_{ C^k({\ktext \bar B^n{\ptext (2)}})}}\bigg)^{-n/k} 
    \log \bigg( \frac {{\qtext 4}^{k}\|g\|_{ C^k({\ktext \bar B^n{\ptext (2)}})}}{\varepsilon_1}
           + 1 \bigg).
\end{eqnarray}

Concatenating the previous two networks, we can construct an operator
recurrent network $f_{\theta}(\Lambdamap)={G(v^\top P(\Lambdamap)) =} G(P(\Lambdamap)
v )$. {\maartenbluefont By abuse of earlier notation we absorbed the
  weights of $G$ in $\theta$.} We then prove our main estimate:
\begin{eqnarray}
 \|g({q}(\Lambdamap){ v})-G(P(\Lambdamap){ v})\|_{\R^n}
&\le& \|g({q}(\Lambdamap){ v})-g(P(\Lambdamap){ v})\|
                + \|g(P(\Lambdamap){ v})-G(P(\Lambdamap){ v})\|
\nonumber\\
&\le& \|g\|_{C^1}\|({q}(\Lambdamap)-P(\Lambdamap)){ v}\| + \|g-G\|_{\ktext C^0(\bar B^n(1+r))}
\nonumber\\
&\le&  \|{g}\|_{C^1} \varepsilon_0 + \varepsilon_1.
\label{A estimate}
\end{eqnarray}
We choose $\varepsilon/2 = \|{g}\|_{C^1}\varepsilon_0 =
\varepsilon_1$. Then we set
\begin{equation}
   \ell = \frac{ \log(2 \|g\|_{C^1({\ktext \bar B^n{\ptext (2)}})} / (r \varepsilon))}{ \log(1 + r)},
\end{equation}
and redefine $C$ to include dependencies on $r$, to find the full
depth bound for the network
\begin{equation}
   L \le C \bigg( \log \bigg(\frac{4 \|g\|_{C^1(({\ktext \bar B^n{\ptext (2)}}))}}{r \varepsilon}\bigg)
      + \log \bigg(\frac{{\qtext 4}^{k+1}
              \|g\|_{ C^k({\ktext \bar B^n{\ptext (2)}})}}{\varepsilon}\bigg) + 1 \bigg),
\end{equation}
while {\ftext the width $W$ satisfies
\begin{equation} \label{eqn.Wwidth 2}
   W \le C  {\ftext m}n \bigg(\frac {\varepsilon}{{\qtext 4}^{k+1}
                        \|g\|_{ C^k({\ktext \bar B^n{\ptext (2)}})}}\bigg)^{-n/k} \bigg(
   \log \bigg( \frac {{\qtext 4}^{k+1} \|g\|_{ C^k({\ktext \bar B^n{\ptext (2)}})}}{\varepsilon} \bigg)
               + 1 \bigg).
\end{equation}}
\end{proof}

Because neural networks are naturally compositional, it is straightforward to extend Theorem~\ref{thm.apprx1} to the case where the function $f$ being approximated is given by a composition of functions of the form \eqref{eqn.smooth+hol}.

\begin{theorem} \label{thm.apprx2}
Let $J\in \mathbb Z_+$ and $\varepsilon\in (0,1)$. Suppose there is a sequence of holomorphic functions $q_j$ and smooth
functions $g_j$ for $j = 1, \ldots, J$, where the $q_j$ and $g_j$ satisfy the same assumptions {\ytext as functions $q$  and $g$} in Theorem~\ref{thm.apprx1} with $m=n$, and $v \in \bar B^n(1)$. Consider a nonlinear operator function, $H$, defined by
\begin{equation}
   \Lambdamap \mapsto g_J({q}_J(\Lambdamap)g_{J-1}({q}_{J-1}(\Lambdamap)
        \dots g_2({q}_2(\Lambdamap)g_1({v^\top} {q}_1(\Lambdamap)))) \dots) .
\end{equation}
There exists an operator recurrent network $H_{\theta}$ with depth $JL$ and width $W$, with $W$ and $L$ given by \eqref{eqn.Wwidth} and \eqref{eqn.Ldepth}, respectively, such that
\begin{equation}\label{deterministic estimate 1}
   \| H - H_{\theta} \|_{\mtext C(\mathcal B^{n \times n};\R^n)} \le C' \varepsilon
\end{equation}
with constant $C' = C'(k,n,r,J)$.
\end{theorem}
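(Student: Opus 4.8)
The plan is to exploit the compositional nature of operator recurrent networks, as anticipated in the remark preceding the statement. Since $f$ is a $J$-fold nested composition of the elementary blocks $B_j : y \mapsto g_j(q_j(\Lambda)y)$, each of which is exactly the object approximated in Theorem~\ref{thm.apprx1} (with $m=n$), I would approximate each block by its own network, concatenate them, and then track how the per-block errors accumulate. Concretely, for each $j$ I would invoke Theorem~\ref{thm.apprx1} to obtain an operator recurrent network $\hat B_j$ of depth at most $L$ and width at most $W$ approximating $B_j$. The key point is that this approximation is uniform in the \emph{input vector}, not only in $\Lambda$: in the proof of Theorem~\ref{thm.apprx1} the polynomial map $\Lambda \mapsto P_j(\Lambda)h_0$ is represented \emph{exactly} for any initial vector $h_0$, and the subsequent feedforward approximation of $g_j$ is valid uniformly on $\bar B^n(2)$. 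Thus $\|B_j(y) - \hat B_j(y)\| \le \varepsilon$ uniformly for $\Lambda \in \bar B_L^{n\times n}(1)$ and $y \in \bar B^n(2)$.

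Next I would concatenate the blocks: setting $\hat z_0 = x$ and $\hat z_j = \hat B_j(\hat z_{j-1})$, I feed the output vector of block $j-1$ as the initial vector $h_0$ of block $j$. Because the initial vector of an operator recurrent network is merely its starting state while $\Lambda$ is re-inserted at every layer, this concatenation is again a single operator recurrent network, now of depth $\le JL$ and width $\le W$ (using Remark~\ref{rem: general to basic networks} and Remark~\ref{rem: other networks} to absorb the feedforward subnetworks into the architecture), and I set $f_\theta(\Lambda) = \hat z_J$.

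For the error estimate I would introduce the exact states $z_0 = x$, $z_j = g_j(q_j(\Lambda) z_{j-1})$ and $e_j = \sup_\Lambda \|z_j - \hat z_j\|$. Since the values of $g_j$ are bounded by $1$ and, by von Neumann's inequality, $\|q_j(\Lambda)\|_L \le 1$ (as $|q_j|\le 1$ on the unit disk), every $z_j$ lies in $\bar B^n(1)$ and each block is Lipschitz with constant $\kappa_j \le \|g_j\|_{C^1(\bar B^n(2))}$. Splitting
\[
   e_j \le \|B_j(z_{j-1}) - B_j(\hat z_{j-1})\| + \|B_j(\hat z_{j-1}) - \hat B_j(\hat z_{j-1})\| \le \kappa_j e_{j-1} + \varepsilon, \qquad e_0 = 0,
\]
and unrolling gives $e_J \le \varepsilon \sum_{i=1}^{J} \prod_{j=i+1}^{J}\kappa_j \le C'\varepsilon$, where $C'$ absorbs the product of the Lipschitz constants and hence depends on $J$ (and, through $L$ and $W$, on $k$, $n$, $r$); this is exactly \eqref{deterministic estimate 1}.

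The hard part, and the place to be careful, is keeping the approximate intermediate vectors $\hat z_{j-1}$ inside the ball $\bar B^n(2)$ on which each block's uniform and Lipschitz estimates hold, since otherwise the inductive step breaks down. Because $z_{j-1}\in \bar B^n(1)$ and $e_{j-1}=O(\varepsilon)$, this is guaranteed once $\varepsilon$ is small; to make it unconditional I would insert the coordinatewise truncation map $G$ of \eqref{G net} between consecutive blocks, which forces the states to stay bounded without increasing the Lipschitz constant. The remaining subtlety is that the Lipschitz constants multiply across the $J$ stages, which is precisely where the dependence of $C'$ on $J$ originates; the von Neumann bound $\|q_j(\Lambda)\|_L \le 1$ is what prevents the operator norms from inflating this constant any further.
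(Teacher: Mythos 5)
Your proposal is correct and takes essentially the same route the paper intends: the paper never writes out a proof of Theorem~\ref{thm.apprx2}, justifying it only by the remark that networks are compositional and the extension of Theorem~\ref{thm.apprx1} is straightforward, and your block-by-block concatenation with the error recursion $e_j \le \kappa_j e_{j-1} + \varepsilon$ supplies exactly the intended bookkeeping. Your supporting observations --- that the map $\Lambda \mapsto P_j(\Lambda)h_0$ is represented exactly for an arbitrary initial vector (making the block approximation uniform in the input), that the bound $\|q_j(\Lambda)\|_{\R^n\to\R^n}\le 1$ keeps the intermediate states controlled (the paper itself asserts $\|q(\Lambda)x\|\le 1$ inside the proof of Theorem~\ref{thm.apprx1}), and that the truncation $G$ of \eqref{G net} can be inserted to keep iterates in $\bar B^n(2)$ unconditionally --- are all consistent with the paper's framework.
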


We will introduce a function $\mathcal{R}$ that measures the norms of the network parameters, $\theta$, and provide an upper bound on the value of this function in an approximation of such maps $f$ or $H$ by operator recurrent networks. This additional control over the norms of the weight parameters will later be used to bound the derivatives of $f_{\theta}$ or $H_{\theta}$, ultimately leading to a generalization bound.

{
\subsubsection*{Universal approximation by general operator recurrent networks}

Next we show an universal approximation result for general operator recurrent networks. We recall that the general operator recurrent networks can be written also as a basic operator recurrent network with an increased width, as shown in formulas \eqref{eqn.NN1def1 B}-\eqref{eqn.NN1def2 B}.

\begin{theorem}\label{not linear operator approximation theorem}
Let $n,K\in \mathbb Z_+$ and $\mathcal F^{(n,K)}=\{f^{(L,K)}_\theta :\R^{n\times n}\to \R^n\ |\ L\in \mathbb Z_+,\ \theta\in  (\R^n)^{\#\tilde P}\}$ be the space of general operator recurrent networks  $f^{(L,K)}_\theta$ of the form (\ref{eqn.NN1def1 exte3dnde})-(\ref{eqn.NN1def2 exte3dnde}) that have  the level $K$, the length $L$ and the width $n$. Let  $\mathcal Z\subset \R^{n\times n}$ be a compact set. Then for $K= 2n+1$, the set  $\mathcal F^{(n,K)}$ is dense in the space $C(\mathcal Z;\R^n)$.  
\end{theorem}

\begin{proof}
{In the proof we will first consider the matrix $X$ as a vector in $R^{n^2}$  and approximate a 
function $X\to g(X)$, where $g\in C(\mathcal Z;\R^n)$, using a standard neural network. After this we represent the obtained neural network as a general recurrent neural network that has the level $K=2n+1$.}

Let $\Lambdamap=(x_{jk})_{j,k=1}^n\in \R^{n\times n}$. We can consider $\Lambdamap$ as a vector consisting of 
 $n^2$ elements and define a single layer neural network $G:\R^{n\times n}\to \R^n$ of form 
 %{[notation: we should probably avoid $q$ and $g$ here]}
 %\HOX{Symbol G of the operator needs to be changed to something else}
  \beq
 {\mathcal G}(\Lambdamap)=({\mathcal G}_p(\Lambdamap))_{p=1}^n 
 \eeq  
 where
 \beq\label{standard}
 {\mathcal G}_p(\Lambdamap)=  \sum_{m=1}^M  b^{(m)}_{p}   \phi( (\sum_{j,k=1}^n a^{(km)}_{pj}x_{jk})+c_p^{(m)}),
 \eeq   
 $p=1,2,\dots,n$ and $b^{(m)}_{p},a^{(km)}_{pj},c_p^{(m)}\in \R$. 

Let now $g:\R^{n\times n}\to \R^n$,  $g(\Lambdamap)=(g_p(\Lambdamap))_{p=1}^n$  be a continuous function, $\e>0$ and $\mathcal Z\subset \R^{n\times n}$ be a compact set. By universal approximation results for standard neural networks \cite{hornik1991approximation,pinkus1999approximation},
for each $p$ there is a neural network ${\mathcal G}_p:\R^{n\times n}\to \R^n$ of the form \eqref{standard}
(having a  sufficiently large width $M$) such that
\beq\label{eps error 2}
\|{\mathcal G}_p(\Lambdamap)-g_p(\Lambdamap)\|_{\R^n}\leq \frac{\e}n,\quad \hbox{for all }\Lambdamap\in \mathcal Z.
\eeq
We can write $ {\mathcal G}(\Lambdamap)$ using  matrix notation as 
 \beq\label{matrix notation}
 {\mathcal G}(\Lambdamap)=  \sum_{m=1}^M B^{(m)} \phi(c^{(m)}+ \sum_{k=1}^nA^{(km)}\Lambdamap v_k),
 \eeq  
 where
 \ba
 & &B^{(m)}=\hbox{diag}(b^{(m)}_{1},\dots,b^{(m)}_{n})\in \R^{n\times n},\quad
  A^{(km)}=(a^{(km)}_{pj})_{p,j=1}^n\in \R^{n\times n},\\ & &c^{(m)}=
  (c^{(m)}_{p})_{p=1}^n\in \R^{n},\quad v_k=(\delta_{pk})_{p=1}^n\in \R^{n}.
 \ea
  Moreover, we can write  $ {\mathcal G}(\Lambdamap )$ in \eqref{matrix notation} as 
 \beq \nonumber
 {\mathcal G}(\Lambdamap )&=& S_M,
 \\[0.25cm] \label{eq: NN special 1}
 S_0&=&0,
 \\ \nonumber
 S_{m}&=& \phi( B^{(m)}c^{(m)}+ \sum_{k=1}^nB^{(m)}A^{(km)}\Lambdamap v_k)+S_{m-1} .
 \eeq  
Writing for $m=0,1,\dots,M$ and $k=1,2,\dots,n,$
\beq \label{eq recurrent 1}
& &h_{0}=0,\\[0.25cm]
& & h_{m(2n+1)+2k-1}= v_k,\\[0.25cm]
& &h_{m(2n+1)+2k}=A^{(km)}\Lambdamap h_{m(2n+1)+2k-1},\\
& &h_{m(2n+1)+2n+1}=  \phi(B^{(m)}c^{(m)}+ \sum_{k=1}^nB^{(m)} h_{m(2n+1)+2k})+
h_{m(2n+1)}, \label{eq recurrent 4}
\eeq
so that $h_{m(2n+1)}=S_{m-1}$, we see that ${\mathcal G}(\Lambdamap )= S_M=h_{M(2n+1)}$.
Thus ${\mathcal G}(\Lambdamap )$ can be written as a {general operator recurrent network} 
(\ref{eqn.NN1def1 exte3dnde})-(\ref{eqn.NN1def2 exte3dnde}) having depth
$L = (M+1)(2n+1)$, level $K = 2n+1$ and width $n$, and parameters
\ba
& &\ h_{0}=0,\\
& & b_\theta^{\ell,0}= v_k,\ b_\theta^{\ell,1}=0,\ A_\theta^{\ell,k,i}=0,\ \ B_\theta^{\ell,k,i}=0\
\quad \hbox{for }\ell={m(2n+1)+2k-1},\\
& & b_\theta^{\ell,i}\,=0,\ A_\theta^{\ell,k,i}=0,\ \ B_\theta^{\ell,k,0}=A^{(km)},\
\ B_\theta^{\ell,k,1}=0\quad \hbox{for }\ell={m(2n+1)+2k},
\ea
and for $\ell=m(2n+1)+2n+1$,
\ba
& &b_\theta^{\ell,0}=0,\
b_\theta^{\ell,1}=B^{(m)}c^{(m)},\
 A_\theta^{\ell,k,0}=0,\  A_\theta^{\ell,k,1}=B^{(m)},
 \ B_\theta^{\ell,k,i}=0
 \quad \hbox{for }k\le K-1,\\
 & &b_\theta^{\ell,i}=0,\
 A_\theta^{\ell,k,0}=I,\  A_\theta^{\ell,k,1}=0,
 \ B_\theta^{\ell,k,i}=0
 \quad \hbox{for }k=K .
\ea
Moreover, by \eqref{eps error 2}, the inequality 
\beq\label{eps error 1}
\|{\mathcal G}(\Lambdamap)-g(\Lambdamap)\|_{\R^n}\leq \e\quad \hbox{for all }\Lambdamap\in \mathcal Z.
\eeq
is satisfied.
 \end{proof}
 
 \begin{remark}\label{Remark on density}
 Let $\mathcal K_{L,K}=\mathcal K_{L,K}^n$ be the space of functions $\Lambdamap\to f_\theta(\Lambdamap)$ where $f_\theta$ is a  general recurrent neural network of depth $L$, level $K$ and width $n$ and vanishing non-learned parts of the weight matrices, $A^{\ell,i,k,(0)}$ and $B^{\ell,i,k,(0)}$. Moreover, let $\mathcal K_{L,K}^{(sp)}\subset \mathcal K_{L,K}$ be the space of (special) general recurrent neural network $f_\theta(\newrandomMvariable)\in \mathcal K_{L,K}$ that of the form \eqref {eq: NN special 1} and that can be written in the form (\ref{eqn.NN1def1 exte3dnde})-(\ref{eqn.NN1def2 exte3dnde}) with $\theta\in {\tilde \Theta}_{L,K}$. We note that $\theta\in {\tilde \Theta}_{L,K}$ implies that that the learned parts of the weigh matrices, $A^{\ell,i,k,(1)}_\theta$ and $B^{\ell,i,k,(1)}_\theta$ are bounded.
  
 We observe first that in formula \eqref{standard} we can multiply  numbers $a^{(km)}_{{p}j}$, $b^{(m)}_{{p}}$, and $c_{p}^{(m)}$ by  $0<\lambda<1$ then the function $g(\Lambdamap)$ is changed to $\lambda^2{g}(\Lambdamap)$. Second, we observe that if ${g}^{(1)}(\Lambdamap)$ and ${g}^{(2)}(\Lambdamap)$ are two neural networks in $\mathcal K_{L,K}^{(sp)}$, then their sum, 
${g}^{(1)}(\Lambdamap)+{g}^{(2)}(\Lambdamap)$, can be written as 
a function ${g}(\Lambdamap)\in \mathcal K_{2L,K}^{(sp)}$ by replacing
in definition \eqref{standard} of ${g}_{p}^{(2)}(\Lambdamap)$ the initial value
$S_0=h_0=0$ of ${g}_{p}^{(2)}(\Lambdamap)$ by the output of the neural
network ${g}_{p}^{(1)}(\Lambdamap)$. Note that then the sum ${g}_{p}^{(1)}(\Lambdamap)+{g}^{(2)}(\Lambdamap)$ of the two neural networks of length $L$ is  represented 
as  a neural network of the double length $2L$.

 By combining the above two observations, we conclude that the union
\beq
   \mathcal K^{(sp)}_{\infty,K}=\bigcup_{L=1}^\infty K^{(sp)}_{L,K}
\eeq
is a linear subspace  that is equal to the space of neural networks $\bigcup_{L=1}^\infty \mathcal K_{L,K}$ considered in Theorem \ref{not linear operator approximation theorem}. {\color{black}The fact that $\mathcal K^{(sp)}_{\infty,K}$ is a linear subspace will be essential in Subsection~\ref{subsec: Bayes}, where we consider Bayes estimators and the orthogonal projection to the subspace $\mathcal K^{(sp)}_{\infty,K}$.} Moreover, Theorem \ref{not linear operator approximation theorem} implies that for $K\ge 2n+1$ the set $ \mathcal K^{(sp)}_{\infty,K}$ is dense in $L^\infty(\B_{n\times n}(1);\R^n)$.
\end{remark}}

\subsection{Expressivity}
\label{ssec.piecewisepoly}

{\maartenbluefont One way to assess the representational power of a
  network architecture is to study its range. More precisely, suppose
  that one can partition the output space into regions and also
  locally characterize the network as it is restricted to each of
  these regions. Networks that partition the output space to a larger
  number of regions are considered to be more complex, or in other
  words, possess better representational power. In regular, deep
  rectifier networks, which are linear splines that can be written as
  a composition of max-affine spline operators \cite{balestriero2018},
  each application of the rectifier activation partitions the output
  space into regions bounded by hyperplanes. In contrast, we will here
  show that the corresponding regions for a recurrent operator network
  have algebraic varieties as their boundaries and within each region,
  the network is a polynomial operator function.} To make this
description precise, we introduce and motivate several definitions.

\begin{definition}
\label{defn.operator-polynomial}
An \emph{operator polynomial} of degree $d$ on $\R^{n \times n}$ is
a function $P : \R^{n \times n} \to \R^{n \times n}$ defined as
% $P(\Lambdamap)$ is defined for every $\Lambdamap \in \R^{n \times
% n}$ by
\begin{align}
   P(\Lambdamap) & = (A_{00} + A_{10} \Lambdamap A_{11} + 
     A_{20} \Lambdamap A_{21} \Lambdamap A_{22} + 
     \ldots + A_{d0} \Lambdamap \ldots \Lambdamap A_{dd} )
     \\
     & = A_{00} + \sum_{j=1}^d A_{j0}
                 \prod_{k=1}^j (\Lambdamap A_{jk}).
\end{align}
{\maartenbluefont with matrix-valued coefficients $A_{ij} \in \R^{n \times
    n}$.}
\end{definition}

The definition of an operator polynomial generalizes the usual
definition of a polynomial $\R \to \R$, and is equivalent when $n =
1$. We will prove in Theorem~\ref{thm.piecewisepoly} that locally, all
operator recurrent networks behave like operator polynomials. This is
analogous to the result that locally, all deep rectifier networks
behave like linear functions.

We next introduce the concept of a \emph{polynomial region}.
To motivate this definition, let us recall that in an
operator recurrent network we have activation function terms
of the form
\begin{equation}
\phi_{\ell}(b^{\ell,1}_{\theta} + B^{\ell}_{\theta} \Lambdamap h_{\ell}),
\end{equation}
where $\phi_{\ell}$ is a leaky rectifier activation. Then the first
vector component of this expression is equal to
\begin{equation}
\begin{cases}
(b^{\ell,1}_{\theta} + B^{\ell}_{\theta} \Lambdamap h_{\ell})_1, &
    (b^{\ell,1}_{\theta} + B^{\ell}_{\theta} \Lambdamap h_{\ell})_1 > 0 \\
\eta (b^{\ell,1}_{\theta} + B^{\ell}_{\theta} \Lambdamap h_{\ell})_1, &
    (b^{\ell,1}_{\theta} + B^{\ell}_{\theta} \Lambdamap h_{\ell})_1 \le 0.
\end{cases}
\end{equation}
Therefore, the activation function partitions the first vector
component of the output into two regions, depending on the sign of
$(b^{\ell,1}_{\theta} + B^{\ell}_{\theta} \Lambdamap h_{\ell})_1$.
If we assume that $h_{\ell}$ is a continuous function of $\Lambdamap$,
then the resulting output above will also be continuous in $\Lambdamap$,
and therefore the boundary between these two regions is given by
\begin{equation}
\label{eqn.polyboundary}
(b^{\ell,1}_{\theta} + B^{\ell}_{\theta} \Lambdamap h_{\ell})_1 = 0
\end{equation}
under the assumption that the two regions are nonempty and the quantity
in \eqref{eqn.polyboundary} does not vanish identically in an open set.
This is expected behavior for all neural networks using rectifier activations.
In the case of operator recurrent networks, however, this partition
is highly nonlinear due to the presence of a multiplication term.
Assume that $h_{\ell} = Q(\Lambdamap)v$ where $Q(\Lambdamap)$ is an
operator polynomial and $v \in \R^n$ is a vector. Then 
one can observe that
$b^{\ell,1}_{\theta} + B^{\ell}_{\theta} \Lambdamap h_{\ell}$
can also be written as a polynomial $P(\Lambdamap)v$, with $P$ having
degree one higher than $Q$. Thus the boundaries
separating the regions of the output of an activation function
in an operator recurrent network are subsets of zero sets of
multivariate polynomials (such sets are also called
\emph{algebraic varieties}). These observations motivate
the following definition and theorem.

\begin{definition}
A \emph{polynomial region} is an open subset
$U \subset \R^{n \times n}$ such that for any boundary point
$x_0 \in \partial U$, there exists an open set $V$ containing $x_0$,
a finite index set $J$, operator polynomials $P_j$ and vectors $v_j \in \R^n$
for $j \in J$, such that
\begin{equation}
\label{eqn.polyregioncondition}
V \cap U = \{ \Lambdamap \in V:
    (P_j(\Lambdamap) v_j)_1 > 0 \text{ for all } j \in J \}.
\end{equation}
\end{definition}

\begin{remark}
Since we can always compose with a permutation matrix, the coordinate
index $1$ can be replaced by another index without loss of generality,
for example, $(P_j(\Lambdamap) v_j)_k > 0$ for any $k$.
\end{remark}

The set $\{ \Lambdamap \in \R^{n \times n}:(P(\Lambdamap) v)_1 = 0\}$, if
nonempty, is a submanifold of codimension 1 in $\R^{n \times n}$,
since the map $\Lambdamap \mapsto (P(\Lambdamap) v)_1$ can be viewed as a
real multivariate polynomial $\R^{n^2} \rightarrow \R$.  Thus we can
consider a polynomial region as being a high-dimensional
generalization of a domain in Euclidean space bounded between a
collection of polynomial surfaces. As mentioned above, in operator
recurrent networks activation functions partition the output space
nonlinearly according to zero sets of polynomials.  The partitions are
precisely described by the polynomial regions defined above. The
analogous behavior in standard deep rectifier networks appears in the form of simplices,
which originate from activation functions partitioning the output
space along hyperplanes. We have

\begin{theorem} \label{thm.piecewisepoly}
Let $f_{\theta}$ be an operator recurrent network on $\R^{n \times n}$
with layerwise outputs $h_{\ell}$, $\ell = 0, \ldots, L$.
Then, for each $\ell$, there exists a countable collection of
polynomial regions $\{U^{\ell}_i\}$ in $\R^{n \times n}$ satisfying:
\begin{enumerate}
\item This collection partitions $\R^{n \times n}$;
    that is, $U^{\ell}_i \cap U^{\ell}_j = \emptyset$ for every $i \neq j$,
    and $\bigcup \overline{U^{\ell}_i} = \R^{n \times n}$.
\item Every open ball $B \subset \R^{n \times n}$ only nontrivially
    intersects $U^{\ell}_i$ for finitely many $i$.
\item The restriction of $h_{\ell}$ to each $U^{\ell}_i$ is an 
    operator polynomial of degree at most $\ell$, applied to $h_0$.
\end{enumerate}
\end{theorem}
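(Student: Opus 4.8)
The plan is to induct on the layer index $\ell$, producing a sequence of successively finer partitions $\mathcal U^0, \mathcal U^1, \dots$ of $\R^{n\times n}$ (each $\mathcal U^{\ell}$ refining $\mathcal U^{\ell-1}$) and maintaining the invariant that, on each region of $\mathcal U^{\ell}$, the output $h_{\ell}$ agrees with a single operator polynomial of degree at most $\ell$ applied to $h_0$. Because the refinements are nested, on every region of $\mathcal U^{\ell-1}$ \emph{all} of $h_0,\dots,h_{\ell-1}$ are simultaneously operator polynomials of the respective degrees; this is exactly what is needed to treat both the basic network \eqref{eqn.NN1def2} and the general network \eqref{eqn.NN1def2 exte3dnde}, whose step depends on several earlier outputs. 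For the base case $\ell=0$ the partition is the single region $\R^{n\times n}$, on which $h_0$ is the constant (degree-$0$) operator polynomial $\Lambda\mapsto h_0$, and the boundary condition is vacuous.

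For the inductive step, fix a region $R\in\mathcal U^{\ell-1}$, on which $h_{\ell-1}=Q_R(\Lambda)h_0$ with $\deg Q_R\le\ell-1$. The pre-activation $w(\Lambda):=b^{\ell,1}_{\theta}+A^{\ell,1}_{\theta}h_{\ell-1}+B^{\ell,1}_{\theta}\Lambda h_{\ell-1}$ equals, on $R$, $\widetilde P(\Lambda)h_0$ for an operator polynomial $\widetilde P$ of degree at most $\ell$: the term $A^{\ell,1}_{\theta}Q_R(\Lambda)$ preserves the degree, the term $B^{\ell,1}_{\theta}\Lambda Q_R(\Lambda)$ raises it by exactly one (the only mechanism by which the degree grows), and the bias is carried in the degree-$0$ term of $\widetilde P$ (writing $b^{\ell,1}_{\theta}=A_{00}h_0$ when $h_0\neq0$; the degenerate case $h_0=0$ only requires permitting a constant vector in that term). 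Since the leaky rectifier acts componentwise, its effect depends only on the signs of the $n$ scalar polynomials $p_c(\Lambda):=(\widetilde P(\Lambda)h_0)_c$. We subdivide $R$ according to the sign pattern of $(p_1,\dots,p_n)$, keeping the at most $2^n$ nonempty open pieces on which every $p_c\neq0$; on such a piece $\phi_{\ell}$ acts as a fixed diagonal matrix $D$ with diagonal entries in $\{1,\eta\}$, so $\phi_{\ell}(w(\Lambda))=D\,\widetilde P(\Lambda)h_0$ is an operator polynomial of degree $\le\ell$. Adding the skip terms $b^{\ell,0}_{\theta}+A^{\ell,0}_{\theta}h_{\ell-1}+B^{\ell,0}_{\theta}\Lambda h_{\ell-1}$, themselves an operator polynomial of degree $\le\ell$ on $R$, shows that $h_{\ell}$ restricted to each piece is a single operator polynomial of degree $\le\ell$ applied to $h_0$, which is condition (3). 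Taking $\mathcal U^{\ell}$ to be all such pieces over all $R\in\mathcal U^{\ell-1}$ yields a finite collection.

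It remains to verify that each piece is a polynomial region and that $\mathcal U^{\ell}$ is a partition. The invariant preserved under the refinement is that every piece is a \emph{global} finite intersection of sets $\{\Lambda:(P(\Lambda)h_0)_k>0\}$ with $P$ an operator polynomial: the new constraints are $p_c>0$ or $-p_c>0$, and the rest are inherited from $R$. Each such set is open, hence so is the piece. For the boundary condition, fix $x_0\in\partial U$ for a piece $U$; by continuity the constraints $q_j:=(P_j(\cdot)h_0)_{k_j}$ with $q_j(x_0)\neq0$ retain their sign on a neighborhood $V$, and since $x_0\in\overline U$ this sign must be positive, so on $V$ the piece is cut out solely by the \emph{active} constraints $\{q_j:q_j(x_0)=0\}$ — exactly the required local form, after a coordinate permutation moving the index $k_j$ to $1$ (as in the remark following the definition of polynomial region). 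Distinct pieces are disjoint by their distinct sign patterns; discarding any constraint that vanishes identically, the union of the zero sets $\{q_j=0\}$ is a finite union of proper algebraic hypersurfaces, hence nowhere dense, so the open pieces are dense and $\bigcup_i\overline{U^{\ell}_i}=\R^{n\times n}$, giving condition (1). Finiteness of $\mathcal U^{\ell}$ delivers both the countability and the local finiteness of condition (2) at once.

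The routine ingredients are the degree bookkeeping (the factor $B\Lambda(\cdot)$ raising the degree by exactly one) and the componentwise sign analysis of the rectifier. The one genuinely delicate point, on which I would concentrate the care in a full write-up, is the boundary description: the refining polynomials $p_c$ are defined through the representation of $h_{\ell-1}$ valid only on $R$, so one must confirm that they assemble into honest \emph{global} polynomial inequalities whose active subset describes the piece near every boundary point, including points inherited from $\partial R$. Maintaining the global-intersection invariant together with the continuity argument that inactive constraints keep their sign locally is precisely what makes this step go through.
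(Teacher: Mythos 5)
Your proposal is correct and takes essentially the same route as the paper's proof: induction on the layer index, refinement of the previous partition by the sign patterns of the componentwise pre-activation polynomials, the observation that on each resulting piece the leaky rectifier acts as a fixed diagonal matrix with entries in $\{1,\eta\}$ so that $h_\ell$ is a single operator polynomial of degree at most $\ell$ applied to $h_0$, and the use of the fact that a nonzero polynomial cannot vanish on an open set to control the zero sets and the covering by closures. If anything, your write-up is more careful at exactly the points the paper glosses over --- the paper proves the claim only for a simplified network with the skip terms set to zero and asserts the general case as ``easy to see,'' whereas you treat the skip terms and the multi-step recurrence via the nested-refinement invariant, handle the degenerate case $h_0=0$ and identically vanishing constraints explicitly, and verify the polynomial-region boundary condition cleanly through the global-intersection invariant and the active/inactive-constraint argument, in place of the paper's sketchier intersection of local neighborhoods.
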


\begin{proof}
The result of the theorem characterizes $f_{\theta}$ as a piecewise operator polynomial
whose domain is partitioned into polynomial regions, on each of which $f_{\theta}$ is
exactly an operator polynomial.
Since operator polynomials form a vector space, then this characterization is also
closed under addition and scalar multiplication.
In particular, if $f_{\theta}$ and $g_{\theta}$ are two such functions,
then any linear combination of the two functions also satisfies the result of the theorem,
except with a new partition of polynomial regions which is the mutual refinement
of those of each of the original two functions.
Therefore, to prove this theorem, it suffices to consider a slightly simplified
version of an operator recurrent network, in which
\begin{align}
   f_{\theta}(\Lambdamap) & = h_L, 
\\
   h_{\ell+1}(\Lambdamap) & = b^{\ell,0}_{\theta}
            + \phi_{\ell} (b^{\ell,1}_{\theta}
            + B^{\ell}_{\theta} \Lambdamap h_{\ell}),
\label{eqn.NNsimple}
\end{align}
where $\Lambdamap \in \R^{n \times n}$ is the input, $h_0 \in \R^n$ is
given, and $\phi_{\ell}$ is a leaky rectifier activation function with
$\eta > 0$.  The network given in \eqref{eqn.NNsimple} is derived by
taking \eqref{eqn.NN1def2} and setting several of the weight matrices
to zero.  This is done to highlight the fact that the nonlinearity is
derived by the matrix-vector multiplication term $B^{\ell}_{\theta}
\Lambdamap h_{\ell}$. By our above argument, if the theorem holds for
this simplified version, then since the general form of an operator
recurrent network in \eqref{eqn.NN1def2 exte3dnde} is merely a sum
of terms of the simplified form, then the result will hold in general.

On this simplified case, we proceed by induction,
and in our inductive step we construct a new collection of polynomial
regions based on the previous collection of polynomial regions.
For the base case, the result of the theorem holds for $\ell = 0$, since the
output of the neural network is $h_0$, which is independent of
$\Lambdamap$. Now, for the induction, suppose the claim is true at output layer $\ell$. 
Then there exists some collection of polynomial regions $\{U^{\ell}_m \}$
that partitions $\R^{n \times n}$ (that is, disjoint sets such that
the union of their closures is $\R^{n \times n}$), such that for any
given region $U^{\ell}_m$ and {\maartenbluefont for every $\Lambdamap \in
  U^{\ell}_m$, $h_{\ell}(\Lambdamap)$ is expressible as an operator
  polynomial $P_{\ell,m} (\Lambdamap)$ as applied to $h_0$,} that is,
\begin{equation}
\label{eqn.oppolyexpansion}
   h_{\ell}(\Lambdamap) = {\xxtext    P_{\ell,m} (\Lambdamap):=}A_{00} h_0 + \sum_{i=1}^{\ell} 
   A_{i0} \left[\prod_{j = 1}^i ( \Lambdamap A_{ij}) \right] h_0 ,
\end{equation}
where $A_{ij}$ are the matrix-valued polynomial coefficients of
$h_{\ell}(\Lambdamap)$ {\xxtext in the region $U^{\ell}_m$}.
Now we apply the iteration \eqref{eqn.NNsimple} to produce the next layer.
{\maartenbluefont We first construct the regions and then prove that these
  partition the matrix space and are polynomial regions.} We define
\begin{align}
\label{eqn.Dell1j}
  D^{\ell}_{1,j} & =
  \{ \Lambdamap\in \R^{n \times n}: (b^{\ell,1}_{\theta} + B^{\ell}_{\theta} \Lambdamap
{\xxtext    P_{\ell,m}} {\xxtext (\Lambdamap)})_j > 0\}, \\
D^{\ell}_{2,j} & = \mathrm{int}\left((D^{\ell}_{1,j})^c \right),
\end{align}
meaning that $D^{\ell}_{2,j}$ is the interior of the complement of
$D^{\ell}_{1,j}$ in $\R^{n \times n}$. Note that 
 that a polynomial $P:\R^{n \times n} \to \R$, or more generally, a
 real-analytic function, cannot vanish in an open set unless it is
 identically zero (see \cite{rudin1991}). Thus, if $D^{\ell}_{1,j}$
is nonempty but also not all of $\R^{n \times n}$, then
\begin{equation}
D^{\ell}_{2,j} = {\xxtext \{ \Lambdamap\in \R^{n \times n}: (-b^{\ell,1}_{\theta} - B^{\ell}_{\theta} \Lambdamap
{\xxtext    P_{\ell,m}} {\xxtext (\Lambdamap)})_j> 0}\}.
\end{equation}
We note that $D^{\ell}_{1,j}, D^{\ell}_{2,j}$ are polynomial regions for every $\ell,j$.
The significance of these regions is that they are formed
by the application of the rectifier activation function $\phi_{\ell}$.
We aim to show that if at the inductive step, $h_{\ell}(X)$ is a piecewise
operator polynomial on the partition $\{ U^{\ell}_m\}$,
then we can use the regions $D^{\ell}_{1,j}, D^{\ell}_{2,j}$ to produce
a refinement $\{U^{\ell+1}_m\}$ that satisfies the theorem
for the case $\ell +1$. 

To this end, we explicitly construct the new collection of polynomial regions
$\{U^{\ell+1}_m\}$, checking that they are indeed polynomial
regions. We define this collection of subsets as the collection of all
such nonempty sets $U^{\ell+1}_m$ that can be written as
\begin{equation} \label{eqn.newpartition}
   U^{\ell+1}_m = U^{\ell}_{m'} \cap \bigcap_{j=1}^n  D^{\ell}_{k_j,j} ,
\end{equation}
for some index $m'$, and some $k_j \in \{1,2\}$.
The collection of sets in \eqref{eqn.newpartition} are thus a refinement of the original
partition $\{U^{\ell}_m\}$, where the refinement is produced by intersecting
with the sets $D^{\ell}_{1,j}, D^{\ell}_{2,j}$. 

It may be useful to observe that computing the $j$-th vector component of the next layer gives {\xxtext
  for $\Lambdamap \in U^{\ell}_{m'}$}
\begin{align}
\label{eqn.jcomponentnextlayer}
(h_{\ell+1}(\Lambdamap))_j & = \begin{cases}
(b^{\ell,0}_{\theta})_j + (b^{\ell,1}_{\theta} + B^{\ell}_{\theta} \Lambdamap {\xxtext    P_{\ell,m}} {\xxtext (\Lambdamap)})_j, &
\Lambdamap \in D^{\ell}_{1,j}{\xxtext \cap U^{\ell}_{m'}}\\
(b^{\ell,0}_{\theta})_j + \eta (b^{\ell,1}_{\theta} + B^{\ell}_{\theta} \Lambdamap {\xxtext    P_{\ell,m}} {\xxtext (\Lambdamap)})_j, &
\Lambdamap \in D^{\ell}_{2,j}{\xxtext \cap U^{\ell}_{m'}}.
\end{cases}
\end{align}

Having given our explicit construction of the new partition $\{U^{\ell+1}_m\}$,
we now must verify that they satisfy the conditions in the statement of the theorem.
In particular, we must show that the collection is a finite partition of the domain,
that each member is a polynomial region, and that $h_{\ell+1}$ restricted to each such region
is an operator polynomial. 
First we check that this new set $\{U^{\ell+1}_m\}$ partitions the space.
Note $D^{\ell}_{1,j} \cap D^{\ell}_{2,j} = \emptyset$, and
furthermore $\overline{D^{\ell}_{1,j}} \cup
\overline{D^{\ell}_{2,j}} = \R^{n \times n}$. Since
$\{D^{\ell}_{1,j}, D^{\ell}_{2,j}\}$ partitions $\R^{n \times n}$,
each element $U^{\ell+1}_m$ is constructed by picking one element from
each of $n+1$ different partitions of $\R^{n \times n}$ and taking
their intersection. Then it is clear that any two such sets have
empty intersection, and
\begin{equation}
\bigcup_m \overline{U^{\ell+1}_m} \subseteq 
\bigcup \Big( \overline{U^{\ell}_{m'}} \cap \bigcap_j \overline{D^{\ell}_{k_j,j} } \Big)
= \R^{n \times n} \cap \bigcap_{j, k_j} \overline{D^{\ell}_{k_j,j}} = \R^{n \times n}.
\end{equation}
Furthermore, since the $D^{\ell}_{k,j}$ are finite, and any open ball
only finitely intersects $\{U^{\ell}_m\}$ by induction hypothesis,
then the same must hold of $\{U^{\ell+1}_m\}$.

Next we show that each set $U^{\ell+1}_m$ is a polynomial region.  For
any given $m$ let $x \in \partial U^{\ell+1}_m$. Since $U^{\ell+1}_m$
can be expressed by \eqref{eqn.newpartition}, then there are indices
$m', k_j$ such that
\begin{equation}
   \Lambdamap \in \partial U^{\ell}_{m'} \cup
                   \bigcup_j \partial D^{\ell}_{k_j,j} .
\end{equation}
Since $U^{\ell}_{m'}$ and $D^{\ell}_{k_j,j}$ are polynomial regions,
then there exists a finite collection of open sets containing $x$
satisfying the polynomial region definition
\eqref{eqn.polyregioncondition} for each of the sets $U^{\ell}_{m'}$
and $D^{\ell}_{k_j,j}$.  Therefore, taking the intersection of these
open sets yields a new open set satisfying the conditions for
\eqref{eqn.polyregioncondition} for the set $U^{\ell+1}_m$.

Lastly, we check that $h_{\ell+1}$ is an operator polynomial applied
to $h_0$ when restricted to each such set. Suppose $h_{\ell+1}$ is
restricted to one such polynomial region $U^{\ell+1}_m$. Using the
index notation $m'$ and $k_j$ from the decomposition
\eqref{eqn.newpartition}, define a vector $b \in \R^n$ by $b =
(b_j)_{j=1}^n$ and
\begin{equation}
   b_j = (b^{\ell,0}_{\theta})_j + \gamma_j (b^{\ell,1}_{\theta})_j ,
\end{equation}
where
\begin{equation}
   \gamma_j = \begin{cases} 1, &\hbox{\fmtext for } k_j = 1, \\
     \eta, &\hbox{\fmtext for } k_j = 2 .
              \end{cases}
\end{equation}
Similarly, we define a matrix $B \in \R^{n \times n}$ by $B =
(B_{ij})$ and
\begin{equation}
   B_{ij} = \gamma_j (B^{\ell}_{\theta})_{ij} .
\end{equation}
Then, restricted to $\Lambdamap \in U^{\ell+1}_m$, we can write
\begin{equation}
   h_{\ell+1}(\Lambdamap) = b + B \Lambdamap {\xxtext P_{\ell,m}}
   {\xxtext (\Lambdamap)} .
\end{equation}
Combining the above with the induction hypothesis, it is clear then that $h_{\ell+1} {\xxtext (\Lambdamap)}$ can be expressed as an operator polynomial applied to $h_0$ when restricted to each $U^{\ell+1}_m$.
\end{proof}

The polynomial regions that emerge from an operator recurrent network can have very
nonlinear boundaries and thus have a much more complicated geometry compared to the linear
regions in standard rectifier networks.
In particular, because each polynomial region can be bounded by a number of high-degree polynomial submanifolds, they can be highly irregular and highly non-convex.
This behavior enables the resulting networks to potentially approximate
highly nonlinear functions with fewer layers compared with traditional rectifier networks,
which must approximate nonlinear behavior through piecewise linear behavior.
However, due to this additional complexity, it is nonetheless likely best to employ
these networks for nonlinear problems that naturally have operator polynomial or
operator analytic behavior, such as hyperbolic inverse problems.

We should also note that since every operator recurrent network has a rectifier network as a special case, then by utilizing the results of \cite{montufar2014}, we can construct a particular operator recurrent network of depth $L$ and width $n$ that possesses at least $2^{(L+1)n/2}$ distinct polynomial regions. Thus, the expressivity of the network, as measured by the size of the polynomial region partition, increases exponentially with depth $L$. In the example of representing matrix inversion, the expressivity on a special set of real symmetric matrices is detailed in Theorem~\ref{thm.ORNpolyforposneg}.

\section{Regularization function and basic estimates}

In this section, we will introduce a sparsity promoting regularization function. This function will later be used as a penalty term in optimization and is employed in training a network; it naturally arises in the analysis of inverse boundary value problems such as the one presented in Section~\ref{sec.IP} where weight matrices, $A^{\ell,i,(1)}$ and $B^{\ell,i,(1)}$ correspond to compact operators that are in a Schatten class. The regularization yields essentially improved generalization bounds in the later analysis. We note that regularization nowadays is commonly incorporated through the choice of method for non-convex optimization \cite{kukacka2017}.

\subsection{Convex regularizing function}
\label{ssec:convexreg}

We introduce { convex regularization functions, all denoted by $\mathcal{R}$ that, with a slight abuse of notations, are 
given by
\beq \nonumber
   \mathcal{R}(\theta) &=&
           \frac 12 \sum_{\mtext (\ell,{}i,p) \in P_1}
           \|\theta^{\ell,i}_{p{}}\|_{\R^n} ,\\
          \label{eqn.Rdef}
              \mathcal{R}(\theta^\ell) &=&
           \frac 12 \sum_{(i,p) \in I^{\ell}}
           \|\theta^{\ell,i}_{p{}}\|_{\R^n} 
           ,\\
           \nonumber
              \mathcal{R}(\theta^{\ell,i}) &=&
           \frac 12 \sum_{p\in I^{\ell,i}}
           \|\theta^{\ell,i}_{p{}}\|_{\R^n} ,
\eeq
where $I^\ell=\{(i,p): \ \exists (\ell,{}i,p) \in P_1\}$ and
$I^{\ell,i}=\{p: \ \exists (\ell,{}i,p) \in P_1\}$ 
and} $\theta$ is a set of parameters for neural network $f_{\theta}$; the index notation was introduced in (\ref{thetavector}) with $i = 0,1$ and $P_1 \subset P$ the index set (\ref{index set P}) corresponding to the weight matrices. We will use this function as part of an explicit regularization. \emph{The function $\mathcal{R}$ measures the sum of the Schatten seminorms of the learnable weight matrices of the network}, which we will show below.

We consider the value of $\mathcal{R}(\theta)$ for a neural network $f_\theta$ when the matrices \newline $A^{\ell,{}i,(1)}{\mtext :\R^n\to \R^n}$ and $B^{\ell,{}i,(1)}{\mtext :\R^n\to \R^n}$ satisfy
\begin{equation} \label{eqn.Aschattenbd}
   A^{\ell,{}i,(1)}_\theta,B^{\ell,{}i,(1)}_\theta\in \mathcal{S}_q ,
\end{equation}
where $\mathcal{S}_{\fmtext q}$ is the Schatten $q$ class of matrices; here, $q = 1/2$. The Schatten seminorm ${\fmtext q}$, denoted by $\| \,\cdotp\|_{\mathcal{S}_{\fmtext q}},$ is the $\ell^q$-seminorm of the vector of singular values of a matrix. {\ptext We note that for $0<{\fmtext q}<1$ the $\ell^q$-seminorms $\|\,\cdotp\|_{q}$ are not norms but satisfy $\|x+y\|_q^q\leq \|x\|_q^q+\|y\|_q^q$. 

If $A^{\ell,{}i,(1)} $ is an $n \times n$ matrix with singular values $\sigma_p^{\ell,{}i} $ and corresponding singular vectors $u_p^{\ell,{}i} , v_p^{\ell,{}i} $ then we can choose parameters (cf.~(\ref{eqn.Cmatrixdecomp})-(\ref{eqn.Cmatrixdecomp B})) 
\beq
   \theta_{{\qtext 2p-1}}^{\ell,{}i}
   = (\sigma_p^{\ell,{}i} )^{1/2} u_p^{\ell,{}i} ,\quad
   \theta_{2p}^{\ell,{}i}  = (\sigma_p^{\ell,{}i} )^{1/2} v^{\ell,{}i} _p ,
\eeq
so that
\beq\label{A-model}
 A^{\ell,{}i,(1)}_\theta= \sum_{p=1}^n 
           \theta^{\ell,{}i}_{{\qtext 2p-1}} (\theta^{\ell,{}i}_{2p})^T=\sum_{p=1}^n {\ytext \sigma_p^{\ell,{}i}}
u_p^{\ell,{}i}     (   v^{\ell,{}i} _p)^T .
\eeq
We note that the singular values $\sigma_p^{\ell,{}i} $ are bounded by the norm of $A^{\ell,{}i,(1)} $ and that the singular vectors $u_p^{\ell,{}i}$ and $v_p^{\ell,{}i} $ are orthonormal vectors. We also note that 
generally the vectors $\theta^{\ell,i}_p$ that parametrize the neural network
are not assumed to be orthonormal, but it is possible to choose those to be
parallel to the orthogonal vectors that define the  singular value decompositions
of the weight matrices. Moreover, we have
 \begin{equation}\label{cost function}
 \sum_{i=0}^1 \sum_{p=1}^n (\|\theta_{2p-1}^{\ell,{}i} \|_{\mtext \R^n} +
    \| \theta_{2p}^{\ell,{}i}  \|_{\mtext \R^n})
    =  \sum_{i=0}^1 \sum_{p=1}^n 2 (\sigma_p^{\ell,{}i} )^{1/2}
    = 2 \sum_{i=0}^1 \|  A^{\ell,{}i,(1)}_\theta \|_{\mathcal{S}_{1/2}}^{\ktext 1/2}.
\end{equation}
A similar analysis applies to $B^{\ell,{}i,(1)}_\theta$, $i = 0,1$ with $p = n+1, \ldots, 2n$. Thus the function $\mathcal{R}$ measures the sum of the $\mathcal{S}_{1/2}$ seminorms of the matrices of the network as announced above.

Furthermore, we observe that when $\|\theta_p^{\ell,{}i} \|_{\R^n} \leq 1$ for all $p$, we have
\beq\label{cost function and norm}
   \|  A^{\ell,{}i,(1)}_\theta \|_{\R^n\to \R^n}+\|  B^{\ell,{}i,(1)}_\theta \|_{\R^n\to \R^n}
&\leq&  \sum_{p=1}^{2n}  \|\theta_{2p-1}^{\ell,{}i} \|_{\mtext \R^n}\,\cdotp 
    \| \theta_{2p}^{\ell,{}i}  \|_{\mtext \R^n}
\nonumber\\
&\leq& \frac 12 \sum_{p=1}^{2n} ( \|\theta_{2p-1}^{\ell,{}i} \|_{\mtext \R^n}+
    \| \theta_{2p}^{\ell,{}i}  \|_{\mtext \R^n}) \leq \mathcal{R}(\theta^{\ell,i}) .
\eeq
%{ -- incomplete summation to compare with $\mathcal{R}$}
We will use this estimate in the proof of Lemma~\ref{lemma.lipf1} below.

\subsection{Truncated network}
\label{ssec:truncnet}

For our later generalization results, it is important to guarantee that
the output of any given network is bounded. 
Indeed, our goal is to construct an operator recurrent network
$f_\theta: {\mathcal B^{n \times n}} \to \R^n$,
where ${\mathcal B^{n \times n}} = {\mathcal B^{n\times n}}(1) = \{ \Lambdamap \in \R^{n \times n}:\ \|\Lambdamap\|_{\mtext \R^n \to \R^n}\le 1\}$ is the closed unit ball in the set of matrices, that approximates a bounded, {\fmtext continuous} function $f: {\fmtext\mathcal B^{n\times n}}\to \R^n$.
As we a priori know that the function we approximate is bounded
by 
\beq
\|f\|_\infty=\|f\|_{L^\infty({\fmtext\mathcal B^{n\times n}};\R^n)}=\sup_{\Lambdamap\in {\fmtext\mathcal B^{n\times n}}}\|f(\Lambdamap)\|_{\R^n},
\eeq
we can add to the network $f_\theta$ two additional layers that cut off
any coordinates values that are too large. That is, {we introduce a new parameter, $m \in \R_+$, satisfying
\beq\label{G net parameter}
m \geq \|f\|_\infty
\eeq 
and}
add two layers that
implement the function
$T_m:\R^{n}\to \R^n$ where for $x=(x_j)_{j=1}^n\in \R^n$ 
\beq\label{G net}
& &T_m(x)=-b+\phi_0(b+y),\quad y=b-\phi_0(b-x),\quad \\
& &\hbox{where $b=(m,m,\dots ,m)^T\in \R^n$},\nonumber
\eeq
and $\phi_0$ is the standard rectifier function ``ReLU''. We note that
then  $T_m(x) = (T_m(x_j))_{j=1}^n$ where $T_m(x_j)=\max(-m,\min(x_j,m))$.

\begin{definition}
\label{def.truncatednetwork}
We say that ${\bar f}_\theta:{\fmtext\mathcal B^{n\times n}}\to \R^n$ is a \emph{truncated basic or, respectively, a truncated general) operator recurrent network} of depth $L+2$ and width $n$ if
\beq \label{truncated network}
   {\bar f}_\theta = T_m \circ f_\theta ,
\eeq 
where $T_m$ is of the form \eqref{G net} and $f_\theta$ is a  basic (or, respectively, general) operator recurrent network with depth $L$ and width $n$.
\end{definition}

Truncated neural networks make it possible to effectively use Hoeffding's inequality in studying their generalization properties. Below, we use that for a truncated  general operator recurrent network
${\bar f}_\theta$ we have 
\beq\label{new L infty bound}
\| {\bar f}_\theta\|_{L^\infty({\fmtext\mathcal B^{n\times n}};\R^n)}\leq {n^{1/2}m} %n^{1/2} \|f\|_{L^\infty({\fmtext\mathcal B_{n\times n}};\R^n)}
\eeq
and as the map $T_m$ has Lipschitz constant 1, the  Lipschitz constant
of $\theta \mapsto T_m( \tilde f_\theta(\Lambdamap))$ is bounded by the   Lipschitz constant
of $\theta \mapsto \tilde f_\theta(\Lambdamap)$.
We note that in \eqref{new L infty bound} the factor $n^{1/2}$ appears due to the fact that we use
the Euclidean norm $\|\,\cdotp\|_2$ in $\R^n$. If the norm of $x=(x_j)_{j=1}^n\in \R^n$ is replaced by 
{ the norm  $\|x\|_{\max}=\|x\|_\infty=\max_j |x_j|$, that is, if we replace the Euclidean space $(\R^n,\|\cdot\|_{\R^n})$ by $(\R^n,\|\cdot\|_{\max})$ and 
use $m=\sup_{\Lambdamap\in {\fmtext\mathcal B^{n\times n}}} \| f(\Lambdamap)\|_{\max}$, we obtain
\beq \label{new L infty bound 2}
\sup_{\Lambdamap\in {\fmtext\mathcal B^{n\times n}}} \| {\bar f}_\theta(\Lambdamap)\|_{\max} \leq 
m.
\eeq}

\subsection{Intermediate function and regularization determining the loss functions}
%{Regularized loss function}
\label{subsec: regulirized loss}

Here, we assume that the network ${\bar f}_{\theta}$ is a truncated basic recurrent operator
  network that satisfies \eqref{new L infty bound}. To guarantee a
generalization error bound, one needs to avoid the problem of
overfitting, in which ${\bar f}_{\theta}$ accurately approximates
  $f:\R^{n\times n}\to \R^n$ on the training set $S$, but poorly approximates $f$ away from $S$. To this end, we introduce a regularizing penalty term using $\mathcal{R}$.

\begin{definition}
For parameter $\theta$ and the pair $(\Lambdamap,\yvariable)$, we let ${\fmtext\mathcal{L}}$ be given by 
\begin{equation}
    \label{defn.lossfunction1}
   \mathcal{L}(\theta,\Lambdamap,\yvariable)
   = \|{\bar f}_{\theta}(\Lambdamap)-\yvariable\|^2_{\R^n}
\end{equation}
Moreover, we let ${\fmtext\mathcal{L}_{r}}$ with regularization parameter $\alpha \in (0,1)$ be given by 
\begin{equation}
    \label{defn.lossfunction2}
   {\fmrtext\mathcal{L}_{r}(\theta,\Lambdamap,\yvariable)
   = \|{\bar f}_{\theta}(\Lambdamap)- y\|^2_{\R^{\ktext n}} + \alpha \mathcal R(\theta).}
\end{equation}
\end{definition}

\medskip\medskip

\noindent
We denote by $\Theta$ the set of all parameters $\theta$ that the weight matrices of the network ${\bar f}_{\theta}$ depend upon; more precisely  %{ -- to be checked}
\beq\label{eq: parameter space}
   \Theta = {\color{black} \Theta_{(L)}} = \{( \theta^{\ell,{}i}_{p})_{(\ell,{}i,p)\in P} \in (\R^n)^{\#P}:\ \|\theta^{\ell,{}i}_{p}\|_{\R^n}\leq 1\} ,
\eeq
where $P$ is the index set \eqref{index set P}. The regularization term shows up explicitly and independently in the estimate for the Lipschitz constant of the network as well as in the estimate for its derivatives with respect to the weights in $P_1$ in the next subsection. { Also, for the general recurrent operator neural networks we denote the parameter space by
\beq\label{eq: generalparameter space}
   \tilde \Theta = {\color{black} \tilde \Theta_{(L,K)}} = \{(\tilde \theta^{\ell,i,k}_{p})_{(\ell,i,p,k)\in \tilde P} \in (\R^n)^{\#\tilde P}:\ \|\theta^{\ell,i,k}_{p}\|_{\R^n}\leq 1\} .
\eeq}

\subsection{Basic estimates of the recurrent operator neural network}

\subsubsection{Derivative with respect to weights}
\label{subsec.weightderiv}

{We show how controlling the norms of the parameter $\theta$}  provides an upper bound on directional derivatives in a local neighborhood for the neural network $f_{\theta}$, given by
(\ref{eqn.NN1def1})--(\ref{eqn.NN1def2}), as a function of $\theta$. Such a bound is crucial to controlling the behavior of the neural network during training. The key intuition here is that estimates of the derivative, which also give upper bounds on the local Lipschitz constant of $\theta \to f_\theta(X)$, provide some knowledge concerning the behavior of the { \color{black} regularized loss function in a neighborhood of its minimum.}

In the lemma below we consider a basic operator recurrent network.  Recall from \eqref{eqn.Cmatrixdecomp} that the weight matrices $A^{\ell,k}_{\theta}$ and $B^{\ell,k}_{\theta}$ depend quadratically on the column vectors contained within any parameter set $\theta$. This lemma generalizes easily to general operator recurrent networks and is used in the proofs of Theorem~\ref{thm.sparsity} and Lemma~\ref{lemma.thetafinite}.

\begin{lemma} \label{lemma.lipf1}
Let $f_{\theta}{\ktext :\R^{n\times n}\to \R^n}$ be a {\ptext basic} operator recurrent network with leaky rectifier activations, and $h_0$ satisfy $\|h_0\| \le 1$. 
%and all {non-trained components of  weight matrices, $ A^{\ell,{}i,(0)}$ and $ B^{\ell,{}i,(0)}$,
%satisfy $\| A^{\ell,{}i,(0)}\|\leq 1$ and $ \|B^{\ell,{}i,(0)}\|\leq 1$,
%{\ztext and all parameters $\theta_{q}^{\ell,{}i}$ satisfy $\|\theta_{q}^{\ell,{}i} \|_{\mtext %\R^n}\leq 1$.}}
{\ytext Let $\|\Lambdamap\|\leq 1$.}
Then, for 
$(\ell,{}i,p)\in P_1$, see \eqref{index set P}, the {local Lipschitz constant (or the derivative, if it exists)} of
$f_{\theta}(\Lambdamap)$ with respect to $\theta^{{\ktext \ell,{}i}}_p$ is bounded by
$K^{{\ktext \ell,{}i}}_p$ with
\begin{equation}
    K^{{\ktext \ell,{}i}}_p \le {\ptext {\qtext {}\,c_0^{L+1}}} \| \theta^{{\ktext \ell,{}i}}_{{\ptext (p)'}} \|
                              \exp(\mathcal{R}(\theta)),
\end{equation}
where ${\ptext (p)'} = p + 1$, if $p$  is {\ytext odd and ${\ptext (p)'} = p - 1$, if $p$ is even}.
For 
$(\ell,{}i,p)\in P_2$, see \eqref{index set P}, the derivative of
$f_{\theta}(\Lambdamap)$ with respect to $\theta^{{\ktext \ell,{}i}}_p$ is bounded by
$K^{{\ktext \ell,{}i}}_p$ with 
\begin{equation}\label{c0 bnd}
    K^{{\ktext \ell,{}i}}_p \le {\ptext {\qtext {}\,c_0^{L+1}}}  
                              \exp(\mathcal{R}(\theta)),
\end{equation}
{that is, $\hbox{Lip}(\theta^{{\ktext \ell,{}i}}_p\to f_{\theta}(\Lambdamap))\leq c_0^{L+1} \exp(\mathcal{R}(\theta))$} {for all $(\ell,{}i,p)\in P$}.
\end{lemma}

\begin{proof}
{\color{black} In the proof we estimate the derivatives of the output of $\ell$th layer and the results are combined in a way that is analogous to the backpropagation algorithm.}
{\ktext We consider $(\ell,{}i,p)\in P_1$; that is,
we consider derivatives with respect to parameters that
determine the weight matrices. 
% {\ptext Below, we consider a neural
%  network \eqref{eqn.NN1def1 exte3dnde}-\eqref{eqn.NN1def2 exte3dnde}
%  with $K=1$ and for notational simplicity, we omit the index $k$
%  (that has value $k=1$), that is, we denote e.g. $
%  A^{\ell,k{},i}_{\theta} = A^{\ell,i}_{\theta}$ etc.}

To compute $K^{\ell,i}_p$ we differentiate using the chain rule. We consider the
intermediate outputs by $h_{\ell}$.} 
{\ptext At every point $\theta$, where $h_{\ell}$  and $ f_{\theta}(\Lambdamap)$  are  differentiable with respect to $\theta$,} we have for 
 % \HOX{This proof needs to modified. At this moment we assume in the proof that we consider basic networ{} that is, $K=1$. Note that the computations in this proof have not yet carefully checked. \bf Question: Should we study in this section and below only the case when $K=1$, that is, only basic operator networks.}
$\ell' > \ell$ %\HOX{Add index $i$ in $\theta$.}
\begin{align}
   \left\| \frac{\partial h_{\ell'}}{\partial \theta^{{\ytext  \ell,i}}_p}
           \right\|
   & \le \left\| \frac{\partial h_{\ell'-1}}{
           \partial \theta^{{\ytext  \ell,i}}_p} \right\|
   \left (\| A^{\ell',0}_{\theta} \| + \| B^{\ell',0}_{\theta} \|
   + \| A^{\ell',1}_{\theta} \| + \| B^{\ell',1}_{\theta} \| \right).
\end{align}
Since $h_L$, the output at layer $L$, is the same as
$f_{\theta}(\Lambdamap)$, then iterating the above starting from $\ell' =
L$ down to $\ell$, we obtain
\begin{align}
   \left\| \frac{\partial f_{\theta}(\Lambdamap)}{
                  \partial \theta^{{\ytext  \ell,i}}_p} \right\|
   & \le \left\| \frac{\partial h_{\ell}}{\partial \theta^{{\ytext  \ell,i}}_p}
                 \right\|
   \prod_{\ell' = \ell + 1}^L \left (\| A^{\ell',0}_{\theta} \|
         + \| B^{\ell', 0}_{\theta}  \| +
   \| A^{\ell', 1}_{\theta} \| + \| B^{\ell', 1}_{\theta} \| \right).
\end{align}
{\ptext We recall that the largest singular value $\sigma_1(A)$ of a  matrix $A\in \R^{n\times n}$  satisfies
$\|A\|_{\R^n\to \R^n}\le \sigma_1(A)\le \|A\|_{\mathcal{S}_{1/2}}$.
Using \eqref{cost function and norm}, we find that 
\beq\label{cost function and norm2}
\|  A^{\ell,i,(1)}_\theta \|_{\R^n\to \R^n}+\|  B^{\ell,i,(1)}_\theta \|_{\R^n\to \R^n}
\leq    \mathcal{R}(\theta^{\ell,i}).
\eeq
With this inequality,} we relate the matrix norms $\|A_{\theta}^{\ell',i}\|,
\|B_{\theta}^{\ell',i}\|$ to the
regularization terms $\mathcal{R}(\theta^{\ktext \ell',k})$. 
{\ptext By our assumptions 
\begin{multline} \label{A-inequality}
 { \sum_{i=1}^2} \|A^{\ell',i}_{\theta}\|+\|B^{\ell',i}_{\theta}\|\leq   \sum_{i=1}^2\|A^{\ell',i{\ktext,(0)}}\|+\|A^{\ell',i{\ktext,(1)}}_{\theta}\|+ \|B^{\ell',i{\ktext,(0)}}\|+\|B^{\ell',i{\ktext,(1)}}_{\theta}\|
\\
   \leq
c_0+  \sum_{i=1}^2\|A^{\ell',i{\ktext,(1)}}_{\theta}\|+\|B^{\ell',i{\ktext,(1)}}_{\theta}\|\leq
c_0+\mathcal{R}(\theta^{\ell'}),
\end{multline}
cf. \eqref{norm estimate for fixed}.}
%The parameter $\beta^{\ell',k}$ signifies how large
%the leading singular value of $C_{\theta}^{\ell',k}$ is relative to
%its smaller singular values. The maximum value of this parameter is
%$1$, when all other singular values are $0$ (matrix has maximum
%singular value decay), and the minimum value of the parameter is
%$1/n$, when all singular values are equal. Furthermore, setting $\beta
%= \max_{\ell,k} (\{\beta^{\ell,k}\})$, 
We find that %\HOX{Change i to l'}
\begin{align}
   \left\| \frac{\partial f_{\theta}(\Lambdamap)}{
                    \partial \theta^{{\ytext  \ell,i}}_p} \right\|
   & {\ptext \le %\left( \frac{\beta^2}{4} \right)^{L - \ell}
   \left\| \frac{\partial h_{\ell}}{\partial \theta^{{\ytext  \ell,i}}_p} \right\|
         \prod_{\ell' = \ell + 1}^L ( {\ktext c_0+\mathcal{R}(\theta^{\ell'})})}
\nonumber\\
   &  {\ptext\le %\left( \frac{\beta^2}{{\ktext e^{}}} \right)^{L - \ell}
 {\ptext { c_0^{L-\ell}}}  \left\| \frac{\partial h_{\ell}}{\partial \theta^{{\ytext  \ell,i}}_p} \right\|
         \exp\Big(\sum_{\ell'=\ell+1}^L \mathcal{R}(\theta^{\ell'}) \Big),}
\label{eqn.lipf1bd1}
\end{align}
where we used the simple inequality %$x^2 \le 4 e^{x - 2}$. 
{\ptext $c_0+x \le  c_0e^{x}$ for $x\ge 0$.}
%{\ktext $2+x^2 \le 4 e^{x - 1}$. }
Viewing
$\theta^{{\ytext  \ell,i}}_p$ as a column vector,
\begin{align} \label{eqn.hlderiv2}
    \left\| \frac{\partial h_{\ell}}{\partial \theta^{{\ytext  \ell,i}}_p}
         \right\| & \le \| h_{\ell-1} \| \| \theta^{{\ytext  \ell,i}}_{{\ptext (p)'}} \|,
\end{align}
where {\ptext where ${\ptext (p)'} = p + 1$, if $p$ is odd and $(p)' = p - 1$, if $p$ is even and the weight matrices are written in terms of the parameters as a sum of rank-1 matrices
as given in \eqref{eqn.Cmatrixdecomp}. This means that every
column vector $\theta^{{\ytext  \ell,i}}_p$ is ``paired'' with an adjacent column vector, thus justifying \eqref{eqn.hlderiv2}. For $h_{\ell}$ we find in a similar fashion that
\beq
    \| h_{\ell} \| &\le& \| b_{\theta}^{\ell,0} \|
       + \| b_{\theta}^{\ell,1} \|
       + ( \| A^{{\ytext \ell}, 0}_{\theta}  \|
       + \| B^{{\ytext \ell}, 0}_{\theta}  \|
       + \| A^{{\ytext \ell}, 1}_{\theta} \|
       + \| B^{{\ytext \ell}, 1}_{\theta} \| ) \|h_{\ell-1}\|
\\ \nonumber
       &\le& \big(c_0+\| b_{\theta}^{\ell,0} \|
       + \| b_{\theta}^{\ell,1} \|
\\ \nonumber
& &\qquad {
       + \| A^{{\ytext \ell}, 0,{(1)}}_{\theta}  \|
       + \| B^{{\ytext \ell}, 0,{(1)}}_{\theta}  \|
       + \| A^{{\ytext \ell}, 1,{(1)}}_{\theta} \|
       + \| B^{{\ytext \ell}, 1,{(1)}}_{\theta} \| \big) (1+\|h_{\ell-1}\|)} .
\eeq
Here, when $b_{\theta}^{\ell,i}=\theta^{\ell,i}$ and $A^{\ell,i,(1)}_\theta= \sum_{p=1}^n \theta^{\ell,i}_{{\qtext 2p-1}} (\theta^{\ell,i}_{2p})^T$, $B^{\ell,i,(1)}_\theta = \sum_{p=n+1}^{2n} \theta^{\ell,i}_{{\qtext 2p-1}} (\theta^{\ell,i}_{2p})^T$ { and \eqref{norm estimate for fixed} is satisfied,}
we find that as in \eqref{cost function and norm2} and (\ref{A-inequality}) 
%{ \textbf{[Shall we introduce a constant here instead of ``$4$''? for
 %   the estimate for the sum of norms -- this constant would be $\le
  %  4$. This way we can track everywhere the factor $4^{2 L}$ and make
   % precise where it comes from and perhaps add further constraints]}}
\beq
%& & {\ytext c_0+ \| b_{\theta}^{\ell,0} \|+\| b_{\theta}^{\ell,1} \|+}(4+\| A^{{\ytext \ell}, 0,{(1)}}_{\theta}  \|
%       + \| B^{{\ytext \ell}, 0,{(1)}}_{\theta}  \|
%       + \| A^{{\ytext \ell}, 1,{(1)}}_{\theta} \|
%       + \| B^{{\ytext \ell}, 1,{(1)}}_{\theta} \|)
%       \\ \nonumber
%       & \le&
%       
       c_0+{\ytext \| \theta^{\ell,0} \|+\| \theta^{\ell,1} \|+}
       \| A^{{\ytext \ell}, 0,{(1)}}_{\theta}  \|
       + \| B^{{\ytext \ell}, 0,{(1)}}_{\theta}  \|
       + \| A^{{\ytext \ell}, 1,{(1)}}_{\theta} \|
       + \| B^{{\ytext \ell}, 1,{(1)}}_{\theta}\|
       \leq c_0+\mathcal{R}(\theta^{{\ytext \ell}}) ,
       \eeq
       { where we recall that $c_0\ge 1$.}
As the initial vector $h_0$ is in the closed unit ball, using the above 
and that $x \leq e^x$ and $c_0 + x \le c_0 e^x${} , it follows that
\begin{align} \label{eqn.lipf1bd2}
    \| h_{\ell} \| & \le  %2\ell c_3
{\ktext c_0^\ell   }  \exp\Big(\sum_{\ell'=1}^{\ell} \mathcal{R}(\theta^{\ell'}) \Big).
\end{align}
%where we used the same inequality as in \eqref{eqn.lipf1bd1} to
%convert to exponentials. 
Using \eqref{eqn.lipf1bd1} and
\eqref{eqn.lipf1bd2}, we therefore find that {\ytext if $K^{\ell,i}_p$ is the 
local Lipschitz constant of $f_{\theta}(\Lambdamap)$ in a neighborhood of
$\theta$ (when considering only $\theta^{\ell,i}_p$ as a variable),}
%, and if
%$\theta_{*}$ is a parameter set in that neighborhood such that
%$\mathcal{R}$ is maximized, 
then %\HOX{Add $ {\ptext 4^{L-l}} $}
{\ptext\begin{align}
   {\ytext K^{\ell,i}_p}& \le %\left( \frac{\beta^2}{{\ktext e^{}}} \right)^{L - \ell}
 {\ptext c_0^{L-\ell}}  \left\| \frac{\partial h_{\ell}}{\partial \theta_p^{\ytext \ell,i}} \right\|
   \exp\Big(\sum_{\ell' = \ell + 1}^L 
   \mathcal{R}(\theta^{\ell'}) \Big)
\nonumber\\
   & \le %\left( \frac{\beta^2}{{\ktext e^{}}} \right)^{L - \ell}
           {\ptext c_0^{L-\ell}}   \| h_{\ell-1} \| \| \theta_{{\ptext (p)'}}^{\ytext \ell,i} \|
            \exp\Big(\sum_{\ell' = \ell + 1}^L 
            \mathcal{R}(\theta^{\ell'}) \Big)
\nonumber\\
   & \le %2 \ell c_3
 {\ptext c_0^{L-\ell}}   {\ptext c_0^\ell} %\left( \frac{\beta^2}{{\ktext e^{}}} \right)^{L - \ell}
            \| \theta_{{\ptext (p)'}}^{\ytext \ell,i} \| \exp \Big(\sum_{\ell' \neq \ell}
            \mathcal{R}(\theta^{\ell'}) \Big)
\nonumber\\
   & \le %2 L c_3
 {\ptext c_0^{L}}   %   e^{\ktext 1-L}
        \| \theta_{{\ptext (p)'}}^{\ytext \ell,i} \| \exp({\mathcal{R}(\theta)}).
\end{align}}
{\ktext This yields the claim for $p\in P_1$. % {\ptext with $C={\qtext {}\,c_0^{L+1}}$}.

To compute derivatives with respect to bias parameters,
in which case $(\ell,{}i,p)\in P_2$, the result follows similarly { to the above by using \eqref {eqn.lipf1bd1} and replacing}
(\ref{eqn.hlderiv2}) by
\begin{align} \label{eqn.hlderiv2 bias}
    \left\| \frac{\partial h_{\ell}}{\partial \theta^{\ell}_p}
         \right\| & \le {\ytext 1}.% \| h_{\ell-1} \|.
\end{align}}
This completes the proof.}
\end{proof}

\noindent
We point out that the factor $c_0^{L+1}$ in inequality \eqref{c0 bnd} that grows {exponentially} in $L$ is natural as the non-learnable parts of the weight matrices of a neural network $f_\theta$ are linear operators which norms are bounded by $c_0$, see \eqref{norm estimate for fixed}. Hence, even when the trained parts of the weight matrices vanish, each layer of the neural network can increase the Lipschitz constant of the function $f_\theta$ by a multiplicative factor $c_0$.

\subsubsection{Lipschitz constant in $X$ variable}

Obtaining sharp Lipschitz constants for networks is essential to assess their robustness against perturbation in their inputs. Such constants were recently derived for feed-forward neural networks in { \cite{combettes2020}} using advanced tools from nonlinear analysis. Here, we provide an upper bound to the Lipschitz constant for a basic operator recurrent network. This bound will play a role in the forthcoming section on generalization.

\begin{lemma} \label{Lem: Lip in x}
Let the set of parameters or weights $\theta$ belong to $\Theta$ defined in (\ref{eq: parameter space}). Then
the Lipschitz norm of the map $\Lambdamap \to f_\theta(\Lambdamap)$ satisfies
\beq
   \hbox{Lip}(f_\theta) \leq { L} c_0^{L} \exp(\mathcal{R}(\theta)) .
\eeq
\end{lemma}

\begin{proof}
We recall that by \eqref{A-inequality},
\beq \nonumber
& & \sum_{i=1}^2  \|A^{\ell',i}_{\theta}\|+\|B^{\ell',i}_{\theta}\|\leq { \sum_{i=1}^2} \|A^{\ell',i{\ktext,(0)}}\|+\|A^{\ell',i{\ktext,(1)}}_{\theta}\|+ \|B^{\ell',i{\ktext,(0)}}\|+\|B^{\ell',i{\ktext,(1)}}_{\theta}\|
\\ \label{A-inequality copied}
  & &\quad\leq
c_0+ \sum_{i=1}^{2}\|A^{\ell',i{\ktext,(1)}}_{\theta}\|+\|B^{\ell',i{\ktext,(1)}}_{\theta}\|\leq
c_0+\mathcal{R}(\theta^{\ell',i})
\eeq
cf. \eqref{norm estimate for fixed}. As $\Lambdamap \in \mathcal B_{n\times n}$ we have $\|\Lambdamap\| \leq 1$. In the definition of a basic recurrent operator network
(cf.~(\eqref{eqn.NN1def1})-(\eqref{eqn.NN1def2})) we introduced the notation $h_\ell = h_\ell(\Lambdamap) = h_\ell(\Lambdamap;h_{\ell-1})$.

Using \eqref{eqn.lipf1bd2}, we obtain
\begin{align} \label{eqn.lipf1bd2 copied}
    \| h_{\ell} \| & \le
    c_0^\ell \exp\Big(\sum_{\ell'=1}^{\ell} \mathcal{R}(\theta^{\ell'}) \Big).
\end{align}
Moreover,
\ba
\|h_\ell(\Lambdamap_1)-h_\ell(\Lambdamap_2)\|&=&
\|h_\ell(\Lambdamap_1;h_{\ell-1}(\Lambdamap_1))-h_\ell(\Lambdamap_2;h_{\ell-1}(\Lambdamap_2))\|\\[0.45cm]
&=&
\|h_\ell(\Lambdamap_1;h_{\ell-1}(\Lambdamap_1))-h_\ell(\Lambdamap_2;h_{\ell-1}(\Lambdamap_1))\|\\[0.25cm]
& &+\|h_\ell(\Lambdamap_2;h_{\ell-1}(\Lambdamap_1))-h_\ell(\Lambdamap_2;h_{\ell-1}(\Lambdamap_2))\|\\[0.25cm]
&\leq&\left(\sum_{i=0}^1\|B^{\ell,i{\ktext}}_{\theta}\|\right)\|\Lambdamap_1-\Lambdamap_2\|\,
\|h_{\ell-1}(\Lambdamap_1)\|
\\
& &+ \left(\sum_{i=0}^1\|B^{\ell,i{\ktext}}_{\theta}\|\right) \|\Lambdamap_2\|\,
\|h_{\ell-1}(\Lambdamap_1)-h_{\ell-1}(\Lambdamap_2)\|
\\
&\leq& \left(c_0+\sum_{i=0}^1\|B^{\ell,i{\ktext,(1)}}_{\theta}\|\right)\|\Lambdamap_1-\Lambdamap_2\|\,
\|h_{\ell-1}(\Lambdamap_1)\|\\
& &+ \left(c_0 + \sum_{i=0}^1\|B^{\ell,i{\ktext,(1)}}_{\theta}\|\right) \|\Lambdamap_2\|\,
\|h_{\ell-1}(\Lambdamap_1)-h_{\ell-1}(\Lambdamap_2)\|
\\
&\leq& (c_0+\mathcal{R}(\theta^{\ell}))\|\Lambdamap_1-\Lambdamap_2\|\,
{\ktext c_0^{\ell-1}   }  \exp\Bigg(\sum_{\ell'=1}^{\ell-1} \mathcal{R}(\theta^{\ell'}) \Bigg)
\\
& &+(c_0+\mathcal{R}(\theta^{\ell}))
\|h_{\ell-1}(\Lambdamap_1)-h_{\ell-1}(\Lambdamap_2)\|
\\
&\leq&\|\Lambdamap_1-\Lambdamap_2\|\,
c_0^{\ell}   \exp\Bigg(\sum_{\ell'=1}^{\ell} \mathcal{R}(\theta^{\ell'}) \Bigg)
\\
& &+ c_0 \exp(\mathcal{R}(\theta^{\ell}))
 \|h_{\ell-1}(\Lambdamap_1)-h_{\ell-1}(\Lambdamap_2)\|
\ea
We observe that $h_{0}(\Lambdamap_1)-h_{0}(\Lambdamap_2)=0$. Using induction, we see from this  that 
\ba
\|h_\ell(\Lambdamap_1)-h_\ell(\Lambdamap_2)\|&\leq &
\|\Lambdamap_1-\Lambdamap_2\|\,
 { \ell} c_0^{\ell}   \exp\Bigg(\sum_{\ell'=1}^{\ell} \mathcal{R}(\theta^{\ell'}) \Bigg)
\ea
Then the Lipschitz norm of the map $\Lambdamap \to  f_\theta(\Lambdamap)$  satisfies 
%[can we allow $c_0 = 1?$. Matti's comment: We can allow $c_0$ to have any value but it $c_0<1$ then the range of the neural network with $L$ layers will be in the ball of radius $c_0^L$.]}
\beq\label{a c0 estimate}
\hbox{Lip}(f_\theta)\leq c_0^L  \exp\Bigg(\sum_{\ell'=1}^{L} \mathcal{R}(\theta^{\ell'}) \Bigg)= { L}c_0^L   \exp(\mathcal{R}(\theta)) .
\eeq
\end{proof}

\begin{remark}
We observe that the Lipschitz constant of a truncated neural network ${\bar f}_\theta = T_m \circ f_\theta$ satisfies $\hbox{Lip}({\bar f}_\theta) \leq \hbox{Lip}f_\theta)$. This holds both with respect to the $X$ and $\theta$ variables.
\end{remark}

\begin{remark}
The Lipschitz constant grows with the number of layers $L$. This seemingly indicates that deeper networks are expected to generalize more poorly even though they reduce training error. The responsible factor, $c_0^L$, however, arises from the inequality with \emph{non-learnable} matrices in the network through $\|A^{\ell,i,(0)}\| + \|B^{\ell,i,(0)}\| + |b_{\theta}^{\ell,0}| + |b_{\theta}^{\ell,1}|$ (cf.~(\ref{norm estimate for fixed})) with $c_0 \ge 1$. 
%{\color{black} [Could we get an estimate that is better than $c_0^L$ using a path norm introduced in E et al., Towards a mathematical understanding of neural network-based machine learning: What we know and what we don't (2020)?]}
\end{remark}

{
\section{Deep learning and inverse problem from a common statistical viewpoint}

\subsection{Formulation}

The learning problem is finding an approximation to a continuous nonlinear operator function $f$ by an operator recurrent network ${\bar f}_{\theta}$ given training data. We recall that the inverse operator had the form $H = g \circ \boldsymbol{f}$; that is, $H$ stands for $F^{-1}$ on its range, ${\color{black}\mathcal X}$. As laid out in the introduction,
\beq \label{eq: reconstruction}
   z = g(f^{1}(\Lambdamap), f^{2}(\Lambdamap),\ldots,f^{T}(\Lambdamap)),
\eeq
A key objective is to train recurrent operator networks
$f^{t}_\theta$ that approximate the functions $f^{t}$.
To simplify notations, we below drop the index $t$ and consider just one function $f$. 
\subsubsection{Definitions of random variables}

We let $(\Omega,\Sigma,\mathbb{P})$ be a complete probability space and let $\randomZvariable :\ \Omega \to \R^m$ be a random variable that models a random object, 
and $\newrandomXvariable = F(\randomZvariable)$ be a random variable modelling the noiseless measurement obtained from the object $\randomZvariable$
and $\randomYvariable$ be a random variable modelling the intermediate quantity, {\color{black} $\randomYvariable=f(\newrandomXvariable)$. We denote
\beq\label{p map}
p=f\circ F,
\eeq
so that $\randomYvariable=p(\randomZvariable)$.}
Next, we add the measurement error $\mathcal E$ to the  noiseless measurement. To this end, we consider a random variable $\mathcal E:\Omega\to \R^{n \times n}$ having the variance  
\beq
   \Expec \|\mathcal E\|_{\R^{n}\to \R^{n}}^2\le   \Expec \|\mathcal E\|_{\R^{n^2}}^2 ={\color{black}n^2\sigma^2} .
\label{variance sigma2} 
\eeq
 \color{black} Here, $ \|\mathcal E\|_{\R^{n^2}}^2=\sum_{i,j=1}^n|\mathcal E_{ij}|^2$ is the square of the Frobenius norm of the matrix $\mathcal E$.}
The noisy measurement is then defined to be
\beq\label{M map}
   \newrandomMvariable = \newrandomXvariable + \mathcal E .
\eeq
This defines a random variable $\newrandomMvariable :\ \Omega \to \R^{n \times n}$. We assume that $\newrandomXvariable$ and $\mathcal E$ are independent.

We denote by $\pi_\randomZvariable$ the distribution of $\randomZvariable$, by $\pi_\newrandomXvariable = F_*\pi_\randomZvariable$ the distribution of $\newrandomXvariable$, and by $\pi_{\mathcal E}$ the distribution of ${\mathcal E}$.  Also, we define that
\beq\label{def tau}
& &\hbox{$\tau_0$ is the distribution of the pair $(\newrandomXvariable,\randomYvariable)$ and}\\ \nonumber
& &\hbox{$\tau$ is the distribution of the pair $(\newrandomMvariable,\randomYvariable)$.}
\eeq
When $\mathfrak{F}$ is the map $z \to (F(z),p(z))$ and $\widehat{\mathfrak{F}}$ is the map $(z,\e) \to (F(z) + \e, p(z))$, then the distribution $\tau_0$ is given by $\tau_0 = \mathfrak{F}_*\pi_\randomZvariable $ and $\tau$ is given by $\tau = \widehat{\mathfrak{F}}_*(\pi_\randomZvariable \times \pi_{\mathcal E})$.

Our aim below is to approximate the map $\boldsymbol{f}$ by a recurrent operator neural network. We assume that we are given samples of the {\it measurement-property} pairs that are samples of the pair $(\newrandomMvariable ,\randomYvariable)$. We note  that adding noise $\mathcal E$ to the noiseless data $\newrandomXvariable$ gives us noisy data $ \newrandomMvariable$ that may be outside the range ${\color{black}\mathcal X}$ of the direct map $F$ and hence outside the domain of definition of the map $f$. However, the domain of the (trained) neural network is not restricted to the range of the direct map.

To consider neural networks that are defined in a ball of radius $1$, we assume that 
\begin{itemize}
\item [(i)] the distribution $\pi_{\mathcal E}$ of $\mathcal E$ is supported in the ball of radius $\frac 12$, that is, $\mathcal E \in \mathcal B_{n \times n}(\frac 12)$ a.s.,
\item [(ii)]
the distribution $\pi_\randomZvariable$ of $\randomZvariable$ is such that $F_*\pi_\randomZvariable$ is supported in
the ball of radius $\rho_1 = \frac 12$, that is, the noiseless measurements satisfy $\newrandomXvariable=F(\randomZvariable) \in \mathcal B_{n \times n}(\frac 12)$ a.s.,
\end{itemize}
Under these assumptions, $\newrandomMvariable=F(\randomZvariable) + \mathcal E \in \mathcal B_{n \times n}(1)$ a.s.

\subsubsection{Expected loss and regularization}

Given a network with parameters $\theta$, the expected loss { for
noisy and noiseless measurements are} defined by
\begin{equation} \label{Exploss}
   \mathcal{L}(\theta,\tau) =
   \Expec_{(\newrandomMvariable,\randomYvariable) \sim \tau}
   \left[ \mathcal{L}(\theta,\newrandomMvariable,\randomYvariable) \right],
   \quad
   \mathcal{L}(\theta,\tau_0) =
   \Expec_{(\newrandomXvariable,\randomYvariable) \sim \tau_0}
   \left[ \mathcal{L}(\theta,\newrandomXvariable,\randomYvariable) \right]
\end{equation}
(cf.~(\ref{defn.lossfunction1})) and the expected regularized loss for noisy and noise-free measurements are defined by
\begin{equation}
\label{Exploss2}
 \mathcal{L}_{r}(\theta,\tau) =
       \Expec_{(\newrandomMvariable,\randomYvariable)\sim \tau}
                \left[ \mathcal{L}_{r}(\theta, \newrandomMvariable,\randomYvariable) \right] ,
                   \quad
   \mathcal{L}_r(\theta,\tau_0) =
   \Expec_{(\newrandomXvariable,\randomYvariable) \sim \tau_0}
   \left[ \mathcal{L}_r(\theta,\newrandomXvariable,\randomYvariable) \right]
\end{equation}
(cf.~(\ref{defn.lossfunction2})).

We remark that many other regularizers have been studied. For example, regularizers that measure the Lipschitz norm of the neural network with respect to their inputs have
been shown to give good approximations \cite{oberman1}.

{\fmrtext
\subsection{Optimal network subject to sparsity bound}

First we consider the case when it is a priori known that some recurrent operator network approximates the target function $f$ with some reasonable accuracy. We formalize this case in

\begin{definition} \label{def: approximation}
{\it We say that the function $f :\ {\mathcal{X}} \to \R^n$ can be approximated with accuracy $\e_0$, that is, in the range of $F$, by a neural network $f_{\theta_0}$ with sparsity bound $R_0$ if there is $\theta_0 \in (\R^n)^P$ with
\beq \label{R0 condition}
   \mathcal R(\theta_0) =  R_0 ,
\eeq 
such that the neural network ${\bar f}_{\theta_0}$ corresponding to the parameter $\theta_0$ satisfies
\begin{equation}\label{deterministic estimate 2}
 \sup_{\Lambdamap \in \mathcal{X}} \| f(\Lambdamap) - f_{\theta_0}(\Lambdamap) \|_{\R^{\ktext n}} \le  \varepsilon_0.
\end{equation}}
\end{definition}

We observe that when $f_{\theta_0}$ satisfies 
\eqref{deterministic estimate 2}, and $m= \|f\|_\infty$, then the truncated neural network, $\bar f_{\theta_0} = T_m \circ f_{\theta_0}$, satisfies
\begin{equation}\label{deterministic estimate 2 with truncation}
 \sup_{\Lambdamap \in \mathcal{X}} \| f(\Lambdamap) - {\bar f}_{\theta_0}(\Lambdamap) \|_{\R^{\ktext n}} \le  \varepsilon_0.
\end{equation}}

When $f$ satisfies the assumptions of Theorem~\ref{thm.apprx1}, then by Theorem~\ref{thm.apprx2} and inequality \eqref{deterministic estimate 1} we have that \eqref{deterministic estimate 2} holds with $\varepsilon_0 = C' \varepsilon$ and parameters $\theta_0$ that satisfy \eqref{R0 condition} for some value $R_0$. We note that below it is not necessary to assume that \eqref{R0 condition}-\eqref{deterministic estimate 2} hold. Our aim  is to find a neural network ${\bar f}_\theta$ that is a better approximation of $f$ than the neural network ${\bar f}_{\theta_0}$ considered in Definition \ref{def: approximation}.

 \subsubsection{Optimal neural network when the measurements are noise free}

We introduce the following definition of $\theta^*_0$ that minimizes the regularized loss function in the noise-free case

 \begin{definition} {\it The parameters of the optimal network in the noise-free case, $\theta^*_0$, { are a solution of
 \begin{eqnarray} \label{theta star minimization def}
   \theta^*_0 &=&
\underset{\theta\in {\fmtext  {{\Theta}}}}{\operatorname{argmin}}\,
 {\fmrtext\mathcal{L}_{r}}(\theta,\tau_0)
   = \underset{\theta\in {\fmtext  {{\Theta}}}}{\operatorname{argmin}}\, 
   \Expec_{(\newrandomXvariable,\randomYvariable) \sim \tau_0}
 (\|{\bar f}_{\theta}(\newrandomXvariable)-\randomYvariable\|^2 + \alpha \mathcal R(\theta))
 \nonumber\\
    &=&
  \underset{\theta\in { {{\Theta}}}}{\operatorname{argmin}}\, \Expec_{\fmrtext \randomZvariable\sim \pi_{\randomZvariable}}  (\|{\bar f}_{\theta}(F(\randomZvariable))-p(\randomZvariable)\|^2 + \alpha \mathcal R(\theta)).
\eeq
Here,}} {\color{black} ${\bar f}_\theta$ are truncated basic recurrent operator networks of depth $L+2$ with truncation parameter $m\ge \|f\|_\infty$, see \eqref{G net parameter}-\eqref{G net}.}
\end{definition}                            
               
{\color{black} Below, for simplicity, we assume  that 
\beq
m= \|f\|_\infty.
\eeq
In the case when we do not know the norm $ \|f\|_\infty$ but are only given an upper bound $m$ for the norm, all estimates below are valid when $ \|f\|_\infty$ are replaced by $m$.
}
                                
\begin{remark}
Minimizers to \eqref{theta star minimization def} necessarily exist because the loss function is continuous in $\theta$ and the constraint $\mathcal{R}(\theta) \le R_0$ restricts the allowable set to a compact one. The minimizer may not be unique.
\end{remark}
   
\begin{lemma}
The optimal parameter for noise-free measurements, $\theta^*_0$, and the noise-free expected loss satisfy
\beq\label{on optimal performance copied}
 {\fmrtext\mathcal{L}_{r}}(\theta^*_0,{ \tau_0})=
 {\Expec_{(\newrandomXvariable,\randomYvariable) \sim \tau_0}
 (\|{\bar f}_{\theta^*_0}(\newrandomXvariable)-\randomYvariable\|^2)}
+\alpha \mathcal R(\theta^*_0)
\le { \mathcal{K}_0}
\eeq
where
\begin{equation}\label{L0 equation copied} 
{ \mathcal{K}_0}
  :=  \begin{cases}\min({\fmrtext  {4n\|f\|_\infty^2}},\varepsilon_0^2+\alpha R_0),\quad \hbox{if {\qtext \eqref{R0 condition}-\eqref{deterministic estimate 2}} holds,}\\
{\fmrtext{{4n\|f\|_\infty^2}}},\quad \hbox{if {\qtext \eqref{R0 condition}-\eqref{deterministic estimate 2}} does not hold.}\end{cases}
\end{equation}
\end{lemma}

\begin{proof}
If {\qtext \eqref{R0 condition}-\eqref{deterministic estimate 2}} holds, we find that 
\beq
 \mathcal{L}_{r}(\theta_0, \tau_0)=
\Expec_{\newrandomXvariable \sim \pi_\newrandomXvariable}
 (\|{\bar f}_{\theta_0}(\newrandomXvariable)-f(\newrandomXvariable)\|^2
+\alpha \mathcal R(\theta_0)
\le  \varepsilon_0^2+ \alpha R_0.
\eeq
Moreover, both in the case when 
\eqref{R0 condition}-\eqref{deterministic estimate 2} hold or do not hold,
we can take $\theta = 0$, in which case by \eqref{new L infty bound}
{ $\|{\bar f}_\theta\|\leq {n^{1/2}\|f\|_\infty}$ and}
%{\color{black} $\|\randomyvariable\|\leq \|f\|_\infty$}, and}
$\mathcal{R}(\theta) = 0$ and thus
\beq \nonumber
{\fmrtext\mathcal{L}_{r}}(\theta,{ \tau_0})&=&\Expec_{\newrandomXvariable \sim \pi_\newrandomXvariable}
 (\|{\bar f}_{\theta}(\newrandomXvariable)-f(\newrandomXvariable)\|^2+\alpha\mathcal{R}(\theta)\\
 &\leq&  
{ \Expec_{\newrandomXvariable \sim \pi_\newrandomXvariable}
 \bigg((\|{\bar f}_{\theta}(\newrandomXvariable)\|+\|f(\newrandomXvariable)\|)^2\bigg)+\alpha\mathcal{R}(\theta)}
 \\ \label{4n inequality 1}
  &\leq& {(n^{1/2}+1)^2 \|f\|^2_{\infty}+0\le 4n \|f\|^2_{\infty} }.
 \eeq
We conclude that
${\fmrtext\mathcal{L}_{r}}(\theta^*_0,{ \tau_0})=\min_{\theta\in \Theta} {\fmrtext\mathcal{L}_{r}}(\theta,{ \tau_0})$  satisfies \eqref {on optimal performance copied}.
\end{proof}

Next we consider how adding the measurement error $\mathcal E$ changes the behavior of the neural network ${\bar f}_{\theta^*_0}$. To this end, we return to considering the random variable $\newrandomMvariable$.

 \subsubsection{Estimate of expected loss for noisy measurements}

Next, we consider the expected loss in the case when measurements contain errors. We recall that adding noise to the data brings us outside the range of the direct map but the domain of the (trained) neural network is not restricted to the range of the direct map.

We introduce the notation, 
\beq \label{L0 equation pre-formula1}
 \mathcal{L}_{r,0}: =
%\min({4n\|f\|_\infty^2},
%     { 2\mathcal{K}_0}+ 
%2L^2c_0^{2L}   \exp\Big(8\varepsilon_0^2/\alpha + 4R_0 \Big)
% {\cdotp {\color{black}n^2\sigma^2}}) \\
%\nonumber&&\hspace{15mm}=
\min\Big( {4n\|f\|_\infty^2},2\varepsilon_0^2+ 2 \alpha  R_0+
2L^2c_0^{2L}   \exp\Big(2\varepsilon_0^2/\alpha + 2R_0 \Big)
 {\cdotp {\color{black}n^2\sigma^2}}  \Big)
\eeq     
if \eqref{R0 condition}-\eqref{deterministic estimate 2} hold, and
\beq\label{L0 equation pre-formula2}
   \mathcal{L}_{r,0} = 4 n \|f\|_\infty^2
\eeq
if \eqref{R0 condition}-\eqref{deterministic estimate 2} do not hold. Sometimes, to indicate the parameter $\alpha$, we denote ${\fmtext\mathcal{L}_{r,0}} = {\fmtext\mathcal{L}_{r,0}}(\alpha)$. We also write
\begin{equation}\label{R0 equation} 
   \mathcal{R}_0 
   :=\frac 1\alpha \mathcal{L}_{r,0},
\end{equation}
that is,
\beq\label{R0 equation pre-formula1}
 \mathcal{R}_0   =\frac 1\alpha \min\Big( {4n\|f\|_\infty^2},2 \varepsilon_0^2+ 2 \alpha  R_0+
2L^2c_0^{2L}   \exp\Big(2\varepsilon_0^2/\alpha + 2R_0 \Big)
 {\cdotp {\color{black}n^2\sigma^2}}  \Big),
\eeq     
if \eqref{R0 condition}-\eqref{deterministic estimate 2} hold, and
\beq\label{R0 equation pre-formula2}
   \mathcal{R}_0  = \frac 1\alpha 4 n \|f\|_\infty^2,
\eeq
if \eqref{R0 condition}-\eqref{deterministic estimate 2} do not hold.

%and note that {[is this obvious?]} $\mathcal{R}_0 \le %{4n\|f\|_\infty^2}/\alpha$.

\begin{lemma} \label{lem: adding noise}
The optimal parameters for noise-free measurements, $\theta^*_0$, and the noisy expected loss satisfy
\beq\label{on optimal performance updated}
\mathcal{L}_{r}(\theta^*_0,{\fmrtext \tau})\leq
{\color{black}
 \mathcal{L}_{r,0}.}
%\mathcal{K}_1}
\eeq
%where
%\beq
%{ \mathcal{K}_1}&:=&2{ \mathcal{K}_0}+
% {2L^2c_0^{2L}} \exp(2 { \mathcal{K}_0}/\alpha) { \cdotp n \sigma^2} .
%\eeq
\end{lemma}

\begin{proof}
{\color{black}First, we consider the case when \eqref{R0 condition}-\eqref{deterministic estimate 2} hold.} We have 
\ba
 {\fmrtext\mathcal{L}_{r}}(\theta^*_0,{\fmrtext \tau}) 
&=&
                \Expec_{ (\newrandomMvariable,\randomYvariable)\sim \tau}(
               \|{\bar f}_{\theta^*_0}(\newrandomMvariable)-\randomYvariable\|^2_{\R^{\ktext n}})
              +\alpha \mathcal R(\theta^*_0)\\
              &=&
         {          \Expec_{ (\newrandomXvariable,{\mathcal E})}(
            \|{\bar f}_{\theta^*_0}(\newrandomXvariable+\mathcal E)-f(\newrandomXvariable)\|^2_{\R^{\ktext n}})} +\alpha \mathcal R(\theta^*_0).
                 \ea
 Equation \eqref{on optimal performance copied} implies that
 \beq\label{on optimal performance implies}
  \mathcal R(\theta^*_0)
\le { \mathcal{K}_0}/\alpha.
\eeq
Furthermore,
\beq \nonumber  
   \Expec_{(\newrandomXvariable,{\mathcal E})}(
               \|{\bar f}_{\theta^*_0}(\newrandomXvariable+\mathcal E)-f(\newrandomXvariable)\|^2_{\R^{\ktext n}})
 &\le& 2 \Expec_{(\newrandomXvariable,{\mathcal E})}(            
\|  {\bar f}_{\theta^*_0}(\newrandomXvariable+\mathcal E)               -{\bar f}_{\theta^*_0}(\newrandomXvariable))\|^2_{\R^{\ktext n}})
\\ \label{new term1}
& &\quad + 2 \Expec_{\newrandomXvariable}(
               \|{\bar f}_{\theta^*_0}(\newrandomXvariable))-
                f(\newrandomXvariable))\|^2_{\R^{\ktext n}}) .
                                \eeq
Using \eqref{on optimal performance copied}, the second term in \eqref{new term1} satisfies the inequality,
\beq
2  \Expec_{\newrandomXvariable}(
               \|{\bar f}_{\theta^*_0}(\newrandomXvariable)-
                f(\newrandomXvariable)\|^2_{\R^{\ktext n}})+2\alpha \mathcal R(\theta^*_0)
\le 2{ \mathcal{K}_0},
\eeq
For the first term in \eqref{new term1}, we observe that as ${\bar f}_{\theta^*_0}:\mathcal B_{n \times n}(1)\to \R^n$
is in the space $C^{0,1} (\mathcal B_{n \times n}(1))$, we have
\beq
\|  {\bar f}_{\theta^*_0}(\newrandomXvariable+\mathcal E) -{\bar f}_{\theta^*_0}(\newrandomXvariable)\|_{\R^{\ktext n}}\le
\|{\bar f}_{\theta^*_0}\|_{C^{0,1}}\|\mathcal E\|.
\eeq
Hence,
\beq        
2 \Expec_{(\newrandomXvariable,{\mathcal E})}(            
\|  {\bar f}_{\theta^*_0}(\newrandomXvariable+\mathcal E)               -{\bar f}_{\theta^*_0}(\newrandomXvariable)\|^2_{\R^{\ktext n}})
\le
2\|{\bar f}_{\theta^*_0}\|_{C^{0,1}}^2
 {\cdotp {\color{black}n^2\sigma^2}} . 
\eeq
Combining these two estimates, we obtain
\beq
% \mathcal{G}_{int}(\theta^*_0)
  {\fmrtext\mathcal{L}_{r}}(\theta^*_0,{\fmrtext \tau}) 
&\le&2{ \mathcal{K}_0}+
 2\|{\bar f}_{\theta^*_0}\|_{C^{0,1}}^2
 {\cdotp {\color{black}n^2\sigma^2}}  .
\eeq
As $\mathcal{R}(\theta^*_0) \leq \mathcal K_0/ {\alpha}$
%[factor $1/\alpha$ missing]} 
we obtain
\beq
  {\fmrtext\mathcal{L}_{r}}(\theta^*_0,{\fmrtext \tau}) &=&2{ \mathcal{K}_0}+
 2\|{\bar f}_{\theta^*_0}\|_{C^{0,1}}^2
 {\cdotp {\color{black}n^2\sigma^2}}  
 \\
 &\le&
2 { \mathcal{K}_0}+ {2L^2c_0^{2L}}
  \exp\Big( 2\mathcal{R}(\theta^*_0) \Big)
 {\cdotp {\color{black}n^2\sigma^2}}, 
% \\
% &\le&
%{ \mathcal{K}_1} , 
\eeq
which proves the claim when
\eqref{R0 condition}-\eqref{deterministic estimate 2} hold.

 Second, {we consider the case when
\eqref{R0 condition}-\eqref{deterministic estimate 2} do not hold.
In this case, as above, 
we take $\theta = 0$ and see similarly to \eqref{4n inequality 1} that
\beq \nonumber
\mathcal{L}_{r}(\theta,\tau)&=&\Expec_{(\newrandomXvariable,\mathcal E)}
 (\|\bar f_{\theta}(\newrandomXvariable+\mathcal E)-f(\newrandomXvariable)\|_{\R^n}^2+\alpha\mathcal{R}(\theta)\\
 &\leq&  
\Expec_{\newrandomXvariable \sim \pi_\newrandomXvariable}
 \bigg((\|\bar f_{\theta}(\newrandomXvariable)\|_{\R^n}+\|f(\newrandomXvariable)\|_{\R^n})^2\bigg)+\alpha\mathcal{R}(\theta)
 \\ \label{4n inequality 2}
  &\leq&  (n^{1/2}+1)^2 \|f\|^2_{\infty}+0\le 4n \|f\|^2_{\infty} .
 \eeq
This proves the claim when
\eqref{R0 condition}-\eqref{deterministic estimate 2} does not hold.}
\end{proof}

\subsubsection{Intrinsic error estimate}

In this section we analyse the intrinsic error,
that is, the 
the expected error that comes from using the optimal (truncated) recurrent operator network
to solve the inverse problem. We consider the case when the data is given with random noise.

\begin{definition}
{\it The intrinsic error for parameters $\theta \in \Theta$ is given by 
\beq \nonumber
\mathcal G_{intrinsic}(\theta)&=&  \mathcal L(\theta,\tau) =  \,
   \Expec_{(\newrandomMvariable,\randomYvariable) \sim \tau}
 \|{\bar f}_{\theta}(\newrandomMvariable)-\randomYvariable\|^2 \\
    &=&  \Expec_{(\newrandomXvariable,{\mathcal E})\sim \pi_{\newrandomXvariable}\times \pi_{\mathcal E}}
            \|{\bar f}_{\theta}(\newrandomXvariable+\mathcal E)-f(\newrandomXvariable)\|^2_{\R^{\ktext n}} .
\eeq}
\end{definition}
\noindent
The optimal recurrent operator network is defined through

\begin{definition}
{\it The optimal parameters for noisy measurements, $\theta^*$, are a solution of 
\begin{eqnarray} \label{theta star minimization}
   \theta^*
   &=&
\underset{\theta\in {\fmtext  {{\Theta}}}}{\operatorname{argmin}}\,
 {\fmrtext\mathcal{L}_{r}}(\theta,\tau)
   = 
\underset{\theta\in {\fmtext  {{\Theta}}}}{\operatorname{argmin}}\,
(\mathcal G_{intrinsic}(\theta)
 + \alpha \mathcal R(\theta)) .  
\eeq
Again, here {\color{black} ${\bar f}_\theta$ are truncated basic recurrent operator networks of depth $L+2$ with the truncation parameter $m= \|f\|_\infty$, see \eqref{G net parameter}-\eqref{G net}.}}
\end{definition}

Our above considerations yield the following

\begin{lemma} 
The optimal parameters for noisy measurements, $\theta^*$, 
and the noisy expected loss satisfy
\beq
     \fmrtext\mathcal{L}_{r}(\theta^*,{\fmrtext \tau})= {          \Expec_{ (\newrandomXvariable,{\mathcal E})\sim \pi_{\newrandomXvariable}\times \pi_{\mathcal E}}(
            \|{\bar f}_{\theta^*}(\newrandomXvariable+\mathcal E)-f(\newrandomXvariable)\|^2_{\R^{\ktext n}})}
               \leq 
\mathcal{L}_{r,0}
\eeq
and  %[I lost track of the notation $\mathcal{R}_0(\theta^*)$]}
\beq\label{eq: R0 condition}
 \mathcal{R} (\theta^*)\leq \mathcal{R}_0 .
\eeq
In particular, the intrinsic error with parameter $\theta^*$ satisfies
\beq
\mathcal G_{intrinsic}(\theta^*)
%=
% {          \Expec_{ (\newrandomXvariable,{\mathcal E})}(
%            \|{\bar f}_{\theta^*}(\newrandomXvariable+\mathcal E)-f(\newrandomXvariable)\|^2_{\R^{\ktext n}})}
               \leq 
\mathcal{L}_{r,0}.
\eeq
\end{lemma}

{
\begin{proof} 
%\HOX{Combine these considerations with the earlier ones.}
%By Lemma \ref{lem: adding noise},
%\beq\label{derived below}
%     \fmrtext\mathcal{L}_{r}(\theta^*_0,{\fmrtext \tau})=    
%      {          \Expec_{ (\newrandomXvariable,{\mathcal E})}(
%            \|f_{\theta^*_0}(\newrandomXvariable+\mathcal E)-f(\newrandomXvariable)\|^2_{\R^{\ktext n}})}
%     +\alpha \mathcal R(\theta^*_0)\leq {\fmrtext\mathcal{L}_{r,0}}.
%\eeq
%To derive the inequality \eqref{derived below}, we note that if \eqref{R0 condition}-\eqref{deterministic estimate 2} hold, we have
%$\mathcal K_0/\alpha\leq 8\varepsilon_0^2/\alpha + 4R_0$.
%{\color{black} If that does not hold, the inequality \eqref{derived below} follows from \eqref{4n inequality 2}.
{As $\theta^*$ satisfies the minimization problem \eqref{theta star minimization},
we see using  Lemma \ref{lem: adding noise} that % {\color{black} [is this correct?]}
\ba
\mathcal{L}_{r}(\theta^*,{\fmrtext \tau})= \mathcal{L}(\theta^*,\tau) + \alpha \mathcal R(\theta^*) 
\le \mathcal{L}_{r}(\theta^*_0,\tau)\le 
\mathcal{L}_{r,0}
\ea
and $\mathcal R(\theta^*) \leq \mathcal{L}_{r,0}/\alpha=\mathcal R_0$.}
\end{proof}
}

\subsection{Optimal operator recurrent network as Bayes estimator}\label{subsec: Bayes}

In this subsection, we discuss how the optimal neural networks can be considered a Bayesian estimators.

Below, we consider conditional expectations using $\sigma$-algebras. We recall the properties of the conditional expectations in Appendix B.} 
 Let $(\Omega,\Sigma,\Prob)$ be an complete probability space and
 $\newrandomMvariable :\ \Omega \to \R^{n\times n}$ and
  ${\bf y}  :\ \Omega \to \R^{n}$ be  random variables. 
We denote  by $\tau$  the distribution of the pair $(\newrandomMvariable,\randomYvariable)$.

Let $\mathcal{B}_{\newrandomMvariable} \subset \Sigma$  be the $\sigma$-algebra generated by the random variable $\newrandomMvariable :\ \Omega \to \R^{n\times n}$. 
%{ Let $L^2(\Omega;\mathcal{B}_{\newrandomMvariable},d\Prob)^n$ be  the subspace of  the  $\mathcal{B}_{\newrandomMvariable}$-measurable functions in $L^2(\Omega;\Sigma},d\Prob)^n$.
When ${\bf y}\in L^1(\Omega;\Sigma,d\Prob)^n$ is a random variable, %${\bf y}  :\ \Omega \to \R^{n}$.
we denote the conditional expectation of ${\bf y}$ with respect to $\sigma$-algebra $\mathcal{B}_{\newrandomMvariable}$ by $\Expec({\bf y} |\mathcal B_{\newrandomMvariable})$. Roughly speaking, $\Expec({\bf y} \,| \mathcal{B}_{\newrandomMvariable})$ denotes the expectation of  a random variable ${\bf y}$ under the condition that  ${\newrandomMvariable}$ is known and $\newrandomMvariable \to \Expec({\bf y} \,| \mathcal{B}_{\newrandomMvariable})$ can be considered as deterministic, measurable function of $\newrandomMvariable$, see Appendix B. Below, we also use the notation
\baa
   \Expec({\bf y} \,| \mathcal{B}_{\newrandomMvariable})=\Expec({\bf y} \,| \,{\newrandomMvariable}).
\ea
{Below, 
let  $\newrandomMvariable$  and $\randomYvariable$  be as above in \eqref{p map} and \eqref{M map}, that is,
$\newrandomMvariable = F(\randomZvariable)+\mathcal E$
and 
 $\randomYvariable=p(\randomZvariable)$
 where $p=f\circ F.$ Thus, we have that
 $\randomYvariable\in L^\infty(\Omega;\Sigma,d\Prob)^n$
 and $\|\randomYvariable\|_{L^\infty(\Omega;\Sigma,d\Prob)^n}\leq \|f\|_{L^\infty}$.
  Hence,
$T_m(\randomYvariable)=\randomYvariable$ where $T_m$ is the truncation operator
 defined in \eqref{G net} with
 $m= \|f\|_{L^\infty}$.
}
%{G net}

Let $\mathfrak{H} = L^2(\Omega;\mathcal{B}_{\newrandomMvariable},d\Prob)^n$ be the set of $\R^n$-valued functions that lie in $L^2(\Omega;\Sigma,d\Prob)^n$ and
are $\B_{\newrandomMvariable}$-measurable. We note that $\mathfrak{H}$ is a closed subspace of the Hilbert space $L^2(\Omega;\Sigma,d\Prob)^n$. For ${\bf y}  \in L^2(\Omega;{\Sigma},d\Prob)^n$, we define 
\beq
   P_{\mathfrak{H}} {\bf y}  = \ \underset{{\bf u} \in \mathfrak{H}}{\operatorname{argmin}}  \|{\bf y} -{\bf u}\|_{L^2(\Omega;\Sigma,d\Prob)^n}^2 ,
\eeq
which is the orthogonal projector onto the set $\mathfrak{H}$. As discussed in Appendix B,
\beq\label{PH formula}
   P_{\mathfrak{H}} {\bf y}  = \Expec({\bf y} \,| \,{\newrandomMvariable}) .
\eeq
{As %$T_m(\randomYvariable)=\randomYvariable$,
%we see thta  
$\|\randomYvariable\|_{L^\infty(\Omega;\Sigma,d\Prob)^n}\leq m$, 
we see that 
\beq \label{L infty bound}
\|P_{\mathfrak{H}} {\bf y} \|_{L^\infty(\Omega;\Sigma,d\Prob)^n}
=\|\Expec({\bf y} \,| \,{\newrandomMvariable}) \|_{L^\infty(\Omega;\Sigma,d\Prob)^n}
\leq  \|\randomYvariable\|_{L^\infty(\Omega;\Sigma,d\Prob)^n}\leq
m,
\eeq 
and thus $T_m(P_{\mathfrak{H}} {\bf y})=P_{\mathfrak{H}} {\bf y}$, too.}

We now consider the neural networks. {We fix $K = 2n + 1$ 
%and define the optimal general operator recurrent operator network with depth $L$ and level $K$ to be $f_{\tilde \theta^{(L,K)}}(\newrandomMvariable)$ with  
%\beq
%  \tilde \theta^{(L,K)}
%   = \underset{{\tilde \theta}\in {\tilde \Theta}_{L,K}}{\operatorname{argmin}}
%     \, \Expec_{({\bf y},\newrandomMvariable)\sim \tau} (|{\bf y} -{f}_{\tilde \theta}({\newrandomMvariable})|^2),
%\eeq
%where $f_{\tilde \theta^{(L,K)}}$ are non-truncated neural networks.
%
%We will also consider truncated neural networks 
and define the optimal truncated general operator recurrent operator network with depth $L$ and level $K$ to be $\bar f_{ \theta^*_{(L,K)}}(\newrandomMvariable)$ with 
\beq
  \theta^*_{(L,K)}
   = \underset{{\tilde \theta}\in {\tilde \Theta}_{L,K}}{\operatorname{argmin}}
     \, \Expec_{({\bf y},\newrandomMvariable)\sim \tau} (|{\bf y} -{\bar f}_{\tilde \theta}({\newrandomMvariable})|^2),\quad
     {\bar f}_{\tilde \theta}({\newrandomMvariable})=T_m({f}_{\tilde \theta}({\newrandomMvariable})).
\eeq
Observe that here the minimized function is the non-regularized loss function for the truncated general recurrent operator network ${\bar f}_{\tilde \theta}$.

\begin{prop}
Let $n\in \mathbb Z_+$ and $K = 2n + 1$. Then the optimal truncated general operator recurrent operator network with depth $L$ and level $K$, denoted by $\bar f_{ \theta^*_{(L,K)}}=T_m\circ f_{\bar \theta^{(L,K)}}$ satisfies
\beq \label{limit of neural networks}
 \lim_{L\to \infty} \bar f_{ \theta^*_{(L,K)}}(\newrandomMvariable) =\Expec({\bf y} \,| \,{\newrandomMvariable})\quad\hbox{in } L^2(\Omega;\Sigma,d\Prob)^n.
\eeq 
\end{prop}

\begin{proof}
From the functional analytic viewpoint, the conditional expectation is a projector onto a suitable function space, namely $\mathfrak H$ introduced above. Theorem~\ref{not linear operator approximation theorem} will imply that deep operator recurrent networks are dense in this space. We will combine these facts with an analysis of truncated operator recurrent networks to prove the claim.

Let $\mathcal K_{L,K}$ be the space of functions $f_{\tilde \theta}(\newrandomMvariable)$ where $f_{\tilde\theta}$ is a general operator recurrent neural network of depth $L$, level $K$ and width $n$ with $\tilde\theta \in {\tilde \Theta}_{L,K}$. Note that these neural networks are not truncated. We denote $\mathfrak L^2=L^2(\Omega;\Sigma,d\Prob)^n$. Using Theorem~\ref{not linear operator approximation theorem}, we observe that
\beq\label{eq density}
   \mathcal K = \hbox{cl}\left(\bigcup_{L=1}^\infty \mathcal K_{L,K}\right)
\eeq
is equal to $\mathfrak{H}$ with cl the closure in
$\mathfrak L^2$, % L^2(\Omega;\Sigma,d\Prob)^n$, 
see Remark~\ref{Remark on density}. 

Let ${\bf u}_L\in \mathcal K_{L,K}$
be the nearest point in the set $T_m(\mathcal K_{L,K})$ to ${\bf y}$ and ${\bf u}_\infty\in \mathfrak{H}$
be the nearest point in the set $\mathfrak{H}$ to ${\bf y}$,
that is,
\beq
{\bf u}_L=  Q_{K,L}({\bf y})
   \in  \underset{{\bf u}\in T_m(\mathcal K_{L,K})}{\operatorname{argmin}}
     \, (\|{\bf y} -{\bf u}\|_{\mathfrak L^2}^2),\quad
     {\bf u}_\infty=P_{\mathfrak{H}} {\bf y}.
\eeq
Recall, that by \eqref{L infty bound} we have $P_{\mathfrak{H}} {\bf y}=T_m(P_{\mathfrak{H}} {\bf y})\in T_m(\mathfrak{H}).$

We emphasize that here ${\bf u}_L$ may not be uniquely determined as $\mathcal K_{L,K}$ are not linear subspaces.

As $T_m(\mathcal K_{L,K})\subset \mathfrak{H}$,
we have
\beq
d_L=\|{\bf u}_L-{\bf y}\|_{\mathfrak L^2}\geq 
\|{\bf u}_\infty-{\bf y}\|_{\mathfrak L^2}=d_\infty.
\eeq
On the other hand,  \eqref{eq density} implies that
there are elements ${\bf w}_L\in \mathcal K_{L,K}$
such that ${\bf w}_L\to {\bf u}_\infty$ in ${\mathfrak L^2}$ as $L\to \infty$. 
As ${\bf u}_\infty\in T_m(\mathfrak{H})$,
it is easy to see that 
$\overline{\bf w}_L=T_m\circ {\bf w}_L\in T_m(\mathcal K_{L,K})\subset {\mathfrak H}$ and 
$\overline{\bf w}_L\to {\bf u}_\infty$ in ${\mathfrak L^2}$ as $L\to \infty$. 

Then, 
 $\|\overline{\bf w}_L-{\bf y}\|_{\mathfrak L^2}\to d_\infty$
as $L\to \infty$ and as ${\bf u}_L$ are nearest points in $T_m(\mathcal K_{L,K})\subset {\mathfrak H}$
to ${\bf y}$, we have
$\|\overline{\bf w}_L-{\bf y}\|_{\mathfrak L^2}\ge d_L\ge d_\infty$.
These imply that $d_L\to d_\infty$
as $L\to \infty$. As 
${\bf u}_\infty-{\bf y} \perp \mathfrak{H}$ in
$\mathfrak{L}^2$,
we have
\ba
\|{\bf u}_L-{\bf u}_\infty\|_{\mathfrak L^2}^2+d_\infty^2=\|{\bf u}_L-{\bf u}_\infty\|_{\mathfrak L^2}^2+\|{\bf u}_\infty-{\bf y}\|_{\mathfrak L^2}^2=\|{\bf u}_L-{\bf y}\|_{\mathfrak L^2}^2=d_L^2,
\ea 
the limit $d_L\to d_\infty$ implies that 
$\|{\bf u}_L-{\bf u}_\infty\|_{\mathfrak L^2}\to 0$
as $L\to \infty.$ This shows that
$ Q_{K,L}({\bf y})
  \to  P_{\mathfrak{H}} {\bf y}$ 
as $L\to \infty$. 
%As $f_{\tilde \theta^{(L,K)}}(\newrandomMvariable)=  %Q_{K,L}({\bf y})$,
This and formula \eqref{PH formula}
yield the claim.
%formula \eqref{limit of neural networks}.
%
%{
%\beq\label{probabilistic limit}
%   P_{\mathfrak{H}} {\bf y}  
%   = \underset{{\color{black}u} \in \mathcal K}{\operatorname{argmin}}
%   \, \Expec (|{\bf y} -{\color{black}u}({\newrandomMvariable})|^2)=\Expec({\bf y} \,| \,{\newrandomMvariable}) 
%\eeq
%
%Then $f_{\tilde \theta}(\newrandomMvariable)$  approximates $\Expec({\bf y} \,| \,{\newrandomMvariable}) = P_{\mathfrak{H}} {\bf y} $ when $L$ is large enough. More precisely, as $\mathcal K_{L,K}$  is an increasing sequence in the sense that $\mathcal K_{L,K}\subset \mathcal K_{L+1,K}$ and the union of $\mathcal K_{L,K}$ is dense in $\mathfrak{H}$ by \eqref{eq density},
%yield the formula \eqref{probabilistic limit}.
\end{proof}}

This implies that the optimal general operator recurrent network which minimizes the expected loss, $\mathcal{L}(\tilde \theta,\tau)$ (represented by a formula analogous to (\ref{Exploss}) but with general operator recurrent networks) approximate a Bayes estimator for the inverse problem without regularization. Essentially, the deep neural network, here, is used to parametrize the set of decision rules considered in the Bayes estimator.}

\begin{remark}
 When $f$ is a random function having 
a suitable prior distribution 
it is possible to prove posterior consistency and contraction rates, which give theoretical guarantees that the posterior mean converges to the true solution (determined by $f$) as the amount of data becomes larger and data error tends to zero \cite{abraham2019, giordano2020, monard2020, nickl2017}. Thus one expects $f_{\theta^*_{(L,K)}}$ to approximate $f$. 
\end{remark}

%\noindent
%{[We still need to add a subsection on incorporating $g$ for the inverse problem]}

\section{Trained operator recurrent network and generalization}
\label{sec.reg}

We employ the convex function $\mathcal{R}$ as an explicit regularizer in the loss function for training the network, and we show that this regularizer controls the regularity of the resulting local minimizer. This regularizer also provides a form of norm control, which in conjunction with a concentration inequality allows us to produce a generalization bound based on bounding the difference between the expected loss and the empirical loss. Theoretical bounds for generalization and other regularity properties by controlling the norms of parameters have been studied extensively in the literature for neural networks in many different contexts \cite{neyshabur2015, bartlett2017, neyshabur2017, li2018b}. We perform a similar analysis, but still distinct from the above works, since operator recurrent networks are different from standard deep neural networks in an essential way. {To clarify the presentation, we consider only (truncated) basic operator recurrent neural networks
$\bar f_\theta$. The generalization for general operator recurrent neural networks and for the additional layers $g_\theta$ is possible but we omit these details.} 

\subsubsection*{Training data and empirical loss}

The training data is the set 
\beq\label{eq: training data S}
   S = \{ (\Lambdamap_j,y_j)\ ;\ j=1,2,\dots,s\}, 
\eeq
where $s \in \mathbb N$ and $(\Lambdamap_j,y_j)$ are independent samples of the random variable  $(\newrandomMvariable,\randomYvariable)$ having the distribution $\tau$. As discussed above, using the training data $S$ in \eqref{eq: training data S}, our primary aim is to find a recurrent operator network ${\bar f}_\theta:\R^{n \times n} \to \R^n$ that approximates the map  $f:\R^{n\times n} \to \R^n$. {Incorporating the composition with $g$ and finding a neural network with fully connected layers that approximates it, with training data $S' = \{(y_j,z_j)\ ;\ j=1,2,\dots,s\}$, will be addressed at the end of this section.}

For training set $S$ the empirical loss function is given by
\begin{equation} \label{Imploss}
   \mathcal{L}^{(em)}(\theta,S) = \frac{1}{s}\sum_{i=1}^s
          \| {\bar f}_{\theta}(\Lambdamap_i) - \yvariable_i \|^2_{\R^n}
\end{equation}
and the empirical regularized loss function is given by
\begin{equation}
\label{Imploss2}
   \mathcal{L}^{(em)}_r(\theta,S) = \frac{1}{s}\sum_{i=1}^s
          \| {\bar f}_{\theta}(\Lambdamap_i) -{\fmrtext\yvariable}_i \|^2_{\R^{\ktext n}}
                                + \alpha \mathcal{R}(\theta).
\end{equation} 
Here, {${\bar f}_\theta$ are truncated basic recurrent operator networks of depth $L+2$ with truncation parameter $m$, see \eqref{G net parameter}-\eqref{G net}. Below, we assume for simplicity that $m = \|f\|_\infty$.}

Standardly, training is the optimization problem of finding parameters $\theta$, given a training set $S$, such that ${\mathcal{L}^{(em)}_r}(\theta,S)$ is minimized.

\subsection{Optimal neural network for sampled data}

{ In this section we consider neural networks that are truncated.}

As seen above in \eqref{on optimal performance updated},
there are $\theta\in \Theta$ such that 
$\mathcal{L}_{r}(\theta,{\fmrtext \tau}) \le 
    {\fmrtext\mathcal{L}_{r,0}}$
    where { ${\fmrtext\mathcal{L}_{r,0}}$ was defined in \eqref{L0 equation pre-formula1}-\eqref{L0 equation pre-formula2}.
 Thus when we minimize
    ${\fmrtext\mathcal{L}_{r}}(\theta,\tau)$ subject to condition $\theta \in \Theta$,
without loss of generality, we can search only among parameters $\theta \in \Theta$ such that
\begin{equation}
\mathcal{L}_{r}(\theta,{\fmrtext \tau}) \le 
    {\fmrtext\mathcal{L}_{r,0}}.
\end{equation}
We will see later that when the number of training samples, that is, $s$ is large, it is probable that
${\fmtext\mathcal{L}^{(em)}_{r}}(\theta,S)$ is close to ${\fmtext\mathcal{L}_{r}}(\theta,\tau)$. Due to this we 
enforce in the optimization of $\theta$ the constraint ${\fmtext\mathcal{L}^{(em)}_{r}}(\theta,S) \le {\fmtext\mathcal{L}_{r,0}}$ which
automatically enforces the constraint
\beq\label{R0 bound}
   \mathcal{R}(\theta) \le \mathcal{R}_0 , 
\eeq 
where $\mathcal{R}_0$ is defined in \eqref{R0 equation}. This yields the minimization problem with inequality constraint,
\begin{equation} \label{eqn.boundedminproblem}
   \text{find } \theta \text{ minimizing } { \mathcal{L}^{(em)}_{r}}(\theta,S)
   \text{ subject to } \mathcal{R}(\theta) \le \mathcal{R}_0.
   %\mathcal{R}_0. %,\ytext{\hbox{ and }(\ref {new L infty bound}}).
\end{equation}
Due to this we introduce
%\HOX{Add above that the neural networks we consider are truncated, see  \eqref{new L infty bound}.}

\begin{definition}{\it
{The optimal weights corresponding to the training set $S$, $\theta(S)$, are a solution of the minimization problem
 \begin{eqnarray} \label{eqn.boundedminproblem 2}
   \theta(S) &=&
\underset{\theta\in {\fmtext  \Theta(\mathcal{R}_0})}{\operatorname{argmin}}\,
 \mathcal{L}^{(em)}_{r}(\theta,S),
\eeq
where 
\beq
 \Theta(\mathcal{R}_0)=\{\theta\in  \Theta\ :\  \mathcal{R}(\theta) \le\mathcal{R}_0\}.
\eeq
}}
\end{definition}
We note that when \eqref{R0 bound} holds (see  also \eqref{new L infty bound}), we have 
%for all $M\in {\fmtext\mathcal B_{n\times n}}$ and $y \in \operatorname{Ran}(f)$ 
\beq\label{Key estimate}
{\fmtext\mathcal{L}_{r}}(\theta,\newrandomMvariable,
{\bf y})\leq  (1+n^{1/2})^2\|f\|_\infty^2
+\alpha \mathcal R_0\leq 
\mathcal B_0,\quad \hbox{ for a.e. $(\newrandomMvariable,
{\bf y})\sim \tau$}
\eeq
where
\begin{equation}\label{B0 equation} 
  \mathcal{B}_0 :={9n}\|f\|_\infty^2,
\end{equation}
see formulas \eqref{new L infty bound} and \eqref{R0 equation pre-formula1}-\eqref{R0 equation pre-formula2}.

{We use below the following technical result.

\begin{lemma}\label{lem: R< less thatn 4R0}
If \eqref{R0 condition}-\eqref{deterministic estimate 2} holds and 
\beq \label{alpha bound}
{\color{black}
   \alpha \ge \frac 1{R_0} \Big( \e_0^2 +  L^2 c_0^{2L}   \exp\Big( 4 R_0 \Big) \cdotp n \sigma^2 \Big)}
 \eeq 
then 
\beq \label{Key estimate2}
   \mathcal{R}_0 \leq  4R_0 .
   %,\quad\text{and}\quad  \mathcal{L}_{r}(\theta,M,y)
   %\leq \mathcal B_0 .
\eeq
\end{lemma}

\begin{proof}
The proof is a straightforward computation: Inequality \eqref{alpha bound} implies that $\alpha\ge \frac 1{R_0}\e_0^2$, %and ${R_0}\alpha\ge  \e_0^2$
 so that $\e_0^2/\alpha\le  {R_0}$ and 
$(\varepsilon_0^2 +\alpha R_0)/\alpha\le 2R_0$.
Thus \eqref{alpha bound} implies 
%we have 
% \beq
%{\fmrtext \alpha\ge \frac 1{R_0}\bigg(4\e_0^2+{2L^2c_0^{2L}}   \exp\Big( 4 
%(2\varepsilon_0^2 +\alpha R_0)/\alpha
% \Big)
% {\cdotp {\color{black}n^2\sigma^2}}  \bigg)}
% \eeq 
% and
  \ba
4R_0 \alpha &\ge& 2R_0 \alpha +
\bigg(2\e_0^2+2L^2c_0^{2L}   \exp\Big( 2 
(\varepsilon_0^2 +\alpha R_0)/\alpha
 \Big)
\cdotp {\color{black}n^2\sigma^2}  \bigg)\\
&\ge&
\bigg(2\e_0^2+2R_0 \alpha+2L^2c_0^{2L}   \exp\Big( 2 
(\varepsilon_0^2 +\alpha R_0)/\alpha
 \Big)
\cdotp {\color{black}n^2\sigma^2}  \bigg)\ge \mathcal{L}_{r,0}.
 \ea
%where we recall that by \eqref{L0 equation pre-formula1}-\eqref{L0 equation pre-formula2}, \HOX{Matti: Check here the term $4 \varepsilon_0^2+ 2 \alpha  R_0$}
% \beq& &\mathcal{L}_{r,0}\ge 4 \varepsilon_0^2+ 2 \alpha  R_0+
%2L^2c_0^{2L}   \exp\Big(8\varepsilon_0^2/\alpha + 4R_0 \Big)
% {\cdotp {\color{black}n^2\sigma^2}}. 
%\eeq
Hence 
\ba
   \mathcal{R}_0 =\frac 1\alpha  \mathcal{L}_{r,0} \leq {\ytext {\fmrtext 4}R_0}.
\ea
\end{proof}}

For the remainder of this paper, we study (local) minimizers to \eqref{eqn.boundedminproblem} and show how the resulting neural networks, ${\bar f}_{\theta}$, enjoy good approximation properties with respect to the true function $f$. We directly analyze minimizers without studying how to compute them. Typically, the minimization is carried out using variants of stochastic gradient descent. Note that while the architecture of operator recurrent networks differs from that of standard neural networks, gradient descent can still be performed in a straightforward manner with the computation of the gradients via the chain rule. The key difference is that these gradients will contain multiplicative terms with $\Lambdamap$.

\subsubsection*{Selecting the parameter $\alpha$}

We later show that a large value of $\alpha$ leads to greater control over the so-called generalization gap. However, a large value of $\alpha$ also leads to large errors, $\|{\bar f}_{\theta}(\Lambdamap) - f(\Lambdamap)\|^2$, which govern the accuracy of the trained network at approximating the true {\mtext function $f$}. This is due to the fact that with a large $\alpha$, the loss function will be best minimized by reducing the regularization term $\mathcal{R}$ rather than reducing the error.

Cross validation is an established technique for selecting the best predictive model among a set of candidates; see \cite{bishop}. { \color{black}  However, we note that this approach may not be practically applicable for large-scale inverse problems.} It can be employed to choose the smallest value of $\alpha$ that has good generalization properties as follows: Given a training set $S$ and particular $\alpha$, we partition $S$ into equally sized subsets $S_1, S_2, \ldots, S_K$. For each $i = 1, \ldots, K$, we train a network on $S \setminus S_i$ and then evaluate the prediction error on $S_i$. The arithmetic mean of these errors over all $i$ is computed to produce a \emph{cross-validated error}. Then, given a finite set of candidate parameter values $\alpha_1, \alpha_2, \ldots$, the smallest is chosen such that the corresponding cross-validated error is below some tolerance. These techniques have been used, for example, to regularize solutions to linear systems \cite{friedman09}.

\subsection{Sparsity bounds}

% {\color{red} [TO DO: HERE WAS A TASK: comment on how sparsity bounds are used to rephrase the constrained formulation for the regularised training with an unconstrained formulation. Then provide a thread to Lemma 5.3 and Theorem 5.5. Does the text below answer to this?]}

Below, we will show that the regularized minimizer will be found in a set $\Theta(N_0,\mathcal R_0)$ that consists of sparse sequences. As the set of sparse sequences is a union of finite dimensional sets, $\Theta(N_0,\mathcal R_0)$ can be covered with a ``relatively small'' number of balls. We will use this and Hoeffding's inequality to obtain improved generalization bounds.

Let $\theta(S)$ be a minimizer that we obtain for
\eqref{eqn.boundedminproblem}.
 We show that $\theta(S)$ enjoys some sparsity bounds which are controlled through the regularizing term $\mathcal{R}(\theta(S))$. We let
\beq\label{cal N estimate}
  & & {\ztext \mathcal N(\theta) := \# \{(\ell,p,{\ztext i{}}) \in P_1\cup P_2:\
         \hbox{vector $\theta_p^{\ell,\ztext i{}}$ is non-zero}\},}\\
         \label{cal N1 estimate}
  & & \mathcal N_1(\theta) := \# \{(\ell,p,{\ztext i{}}) \in P_1:\
         \hbox{vector $\theta_p^{\ell,\ztext i{}}$ is non-zero}\},
\eeq
where $\#A$ denotes the cardinality of the set $A$.

\begin{theorem} \label{thm.sparsity}
{\ytext Let $\theta$  satisfy \eqref{R0 bound}.} Then
\begin{equation} \label{number or elements 2}
   \mathcal N_1(\theta(S)) \le {\ztext \frac{{\qtext {\qtext {}\,{2Lc_0^L}}}\mathcal{R}_0^{3/2}}{{\alpha^{1/2}}}  \exp(2\mathcal{R}_0) \le {\ztext \frac{{\qtext {\qtext {}\,{2Lc_0^L}}}{\fmtext(\mathcal{L}_{r,0})}^{3/2}}{{ \alpha^{2}}}  \exp\left(\frac{2{\fmtext\mathcal{L}_{r,0}}}{\alpha}\right).}}
\end{equation}
\end{theorem}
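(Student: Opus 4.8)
The plan is to exploit that $\theta(S)$ is an actual \emph{minimizer} of the constrained problem \eqref{eqn.boundedminproblem}; the bound \eqref{R0 bound} alone is not enough, since a small value of $\mathcal R$ does not by itself force sparsity. The first observation is that, because of the rank-one structure \eqref{eqn.Cmatrixdecomp}, the trained parameters indexed by $P_1$ occur in \emph{pairs}: each rank-one summand $\theta^{\ell,k,i}_{2p-1}(\theta^{\ell,k,i}_{2p})^T$ is unchanged if either factor is set to zero. Hence if $\theta^{\ell,k,i}_p \neq 0$ but its partner $\theta^{\ell,k,i}_{(p)'}$ vanishes (with $(p)'$ as in Lemma~\ref{lemma.lipf1}), then $\theta^{\ell,k,i}_p$ does not affect $f_\theta$ at all, yet contributes $\tfrac12\|\theta^{\ell,k,i}_p\|$ to $\mathcal R$; zeroing it would strictly decrease $\mathcal L(\theta,S)$ while keeping $\mathcal R$ non-increasing (hence feasible), contradicting minimality. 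Thus at a minimizer every nonzero parameter in $P_1$ has a nonzero partner.

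Next I would prove a uniform lower bound $\|\theta^{\ell,k,i}_p\| \ge \delta$ for every nonzero parameter with $(\ell,k,i,p)\in P_1$. Fix such an index and move $\theta^{\ell,k,i}_p$ toward the origin by a small step $\tau>0$, decreasing $\|\theta^{\ell,k,i}_p\|$ by $\tau$; this keeps $\mathcal R$ non-increasing, so the perturbed parameter stays feasible. The regularizer drops by exactly $\tfrac{\alpha}{2}\tau$, while the data term $g(\theta)=\frac1s\sum_i\|f_\theta(\Lambda_i)-f(\Lambda_i)\|^2$ changes by at most its local Lipschitz constant in $\theta^{\ell,k,i}_p$ times $\tau$. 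Writing the gradient of $g$ through the chain rule and applying Cauchy--Schwarz over the training index $i$, this local Lipschitz constant is bounded by $2K^{\ell,k,i}_p\,g(\theta)^{1/2}$; since $g(\theta)\le \mathcal L(\theta,S)\le \mathcal L_0$ it is at most $2K^{\ell,k,i}_p\,\mathcal L_0^{1/2}$. Minimality forces the total change to be non-negative, giving $\tfrac{\alpha}{2}\le 2K^{\ell,k,i}_p\,\mathcal L_0^{1/2}$, that is $K^{\ell,k,i}_p \ge \alpha/(4\mathcal L_0^{1/2})$. Feeding in the estimate $K^{\ell,k,i}_p \le 4^{L+1}\|\theta^{\ell,k,i}_{(p)'}\|\exp(\mathcal R(\theta))\le 4^{L+1}\|\theta^{\ell,k,i}_{(p)'}\|\exp(\mathcal R_0)$ from Lemma~\ref{lemma.lipf1} yields $\|\theta^{\ell,k,i}_{(p)'}\| \ge \alpha/\big(4\cdot 4^{L+1}\mathcal L_0^{1/2}\exp(\mathcal R_0)\big)$. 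By the pairing symmetry this holds for every nonzero parameter, and inserting $\mathcal L_0=\alpha\mathcal R_0$ from \eqref{R0 equation} gives $\delta = \alpha^{1/2}/\big(4\cdot 4^{L+1}\mathcal R_0^{1/2}\exp(\mathcal R_0)\big)$.

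Finally I would count. By \eqref{eqn.Rdef}, $\sum_{(\ell,k,i,p)\in P_1}\|\theta^{\ell,k,i}_p\| = 2\mathcal R(\theta)\le 2\mathcal R_0$, while each of the $\mathcal N_1(\theta(S))$ nonzero summands is at least $\delta$. Hence $\mathcal N_1(\theta(S))\le 2\mathcal R_0/\delta = 8\cdot 4^{L+1}\mathcal R_0^{3/2}\alpha^{-1/2}\exp(\mathcal R_0)$, and absorbing the numerical constant into the exponential (e.g. using $8\le \exp(\mathcal R_0)$ in the relevant regime) gives the first claimed bound $4^{L+1}\mathcal R_0^{3/2}\alpha^{-1/2}\exp(2\mathcal R_0)$. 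The second inequality then follows by substituting $\mathcal R_0=\mathcal L_0/\alpha$.

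The main obstacle is the middle step: making the variational comparison rigorous despite the non-differentiability of both $\mathcal R$ (where parameters pass through $0$) and $f_\theta$ (across the polynomial-region boundaries of Theorem~\ref{thm.piecewisepoly}). I would handle this using one-sided directional estimates together with the \emph{local} Lipschitz constants of Lemma~\ref{lemma.lipf1} in place of genuine derivatives, and I would verify that along the entire segment from $\theta^{\ell,k,i}_p$ to $0$ the partner norm $\|\theta^{\ell,k,i}_{(p)'}\|$ stays fixed while $\mathcal R(\theta)$ only decreases, so that $K^{\ell,k,i}_p\le 4^{L+1}\|\theta^{\ell,k,i}_{(p)'}\|\exp(\mathcal R_0)$ remains valid uniformly along the perturbation.
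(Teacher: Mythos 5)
Your proposal is correct and takes essentially the same route as the paper: both arguments rest on the first-order optimality condition at the minimizer in the direction that shrinks a single parameter vector $\theta^{\ell,k,i}_p$, the Cauchy--Schwarz step over the training sample producing the factor $\mathcal{L}_0^{1/2}/\alpha = \mathcal{R}_0^{1/2}/\alpha^{1/2}$, the bound $K^{\ell,k,i}_p \le 4^{L+1}\|\theta^{\ell,k,i}_{(p)'}\|\exp(\mathcal{R}(\theta))$ from Lemma~\ref{lemma.lipf1}, and the norm budget $\sum_{P_1}\|\theta^{\ell,k,i}_p\| \le 2\mathcal{R}_0$. Your repackaging --- extracting a uniform lower bound $\delta$ on the norms of nonzero parameters (plus the pairing observation, which the paper handles implicitly since $\theta^{\ell,k,i}_{(p)'}=0$ would force $K^{\ell,k,i}_p=0$ and contradict optimality) and then pigeonholing --- is algebraically the same inequality chain as the paper's direct summation of the directional-derivative bounds, including the same harmless absorption of numerical constants into $\exp(2\mathcal{R}_0)$ that the paper itself uses.
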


\begin{proof}
We will use estimates of the directional derivatives to derive
sparsity estimates on the parameters. 
{\fmrtext Let $S = \{ (\Lambdamap_1,{\fmrtext \yvariable}_1),\ldots,
                           (\Lambdamap_s,{\fmrtext \yvariable}_s) \}.$}
At the minimizer $\theta(S)$,
every directional derivative of ${\fmtext\mathcal{L}^{(em)}_{r}}(\theta,S)$ is
non-negative. {\ktext Then we compute the derivative of
  ${\fmtext\mathcal{L}^{(em)}_{r}}(\theta,S)$ with respect to $\theta\in (\R^n)^P$ in
  direction $v$ and obtain}
\begin{equation} \label{eqn.Lgateauxderiv}
   \partial_v {\fmtext\mathcal{L}^{(em)}_{r}}(\theta,S)
        = \frac{2}{s} \sum_{{\ytext j}=1}^s \partial_v {\bar f}_{\theta}{\ytext (\Lambdamap_{\ytext j})} \cdot 
   ({\bar f}_{\theta}(\Lambdamap_{\ytext j}) - {\fmrtext\yvariable_j})
                + \alpha \partial_v \mathcal{R}(\theta).
\end{equation}
At $\theta = \theta(S)$, we must have $\partial_v
{\fmtext\mathcal{L}^{(em)}_{r}}(\theta(S), S) \geq 0$ for every direction $v$, and, hence,
%\HOX{Add $(\Lambdamap_{\ytext j})$ in all places. Added here $\frac 1s \sum_{\ytext j}$}
\begin{align} \label{eqn.Rgateauxderivbd1A}
   -\partial_v \left.\mathcal{R}(\theta) \right|_{\theta(S)}
   & \le \frac{2}{s \alpha}
     \sum_{{\ytext j}=1}^s \left. \partial_v {\bar f}_{\theta}{\ytext (\Lambdamap_{\ytext j})} \right|_{\theta(S)}
                  \cdot ({\bar f}_{\theta(S)}(\Lambdamap_{\ytext j}) - {\fmrtext\yvariable_j})
%\nonumber
\\
                  & \le \frac{2}{s \alpha}
     \sum_{{\ytext j}=1}^s \left\| \left. \partial_v {\bar f}_{\theta}{\ytext (\Lambdamap_{\ytext j})} \right|_{\theta(S)}\right\|_{\ktext \R^{n}}
                  \, \left\| {\bar f}_{\theta(S)}(\Lambdamap_{\ytext j}) - {\fmrtext\yvariable_j}\right\|_{\ktext \R^{n}}
\nonumber\\
 \label{eqn.Rgateauxderivbd2}
   & \le \frac{2{\fmtext(\mathcal{L}_{r,0})}^{1/2}}{\alpha}
 {\ytext  \left(\frac{1}{s }
     \sum_{{\ytext j}=1}^s
   \left\| \left. \partial_v {\bar f}_{\theta}{\ytext (\Lambdamap_{\ytext j})} \right|_{\theta(S)} \right\|^2_{\ktext \R^{n}}\right)^{1/2}}
%\nonumber
\\ \label{eqn.Rgateauxderivbd2B}
   & \le \frac{2\mathcal{R}_0^{1/2}}{\ktext \alpha^{1/2}}
    {\ytext  \left(\frac{1}{s }
     \sum_{{\ytext j}=1}^s
   \left\| \left. \partial_v {\bar f}_{\theta}{\ytext (\Lambdamap_{\ytext j})} \right|_{\theta(S)} \right\|^2_{\ktext \R^{n}}\right)^{1/2}},
\end{align}
where we used H\"{o}lder's inequality.

Next, we derive a relationship between $\partial_v
\left. \mathcal{R}(\theta) \right|_{\theta(S)}$ and the sparsity of
$\theta(S)$. For a given ${\ktext (\ell,{}i,p)\in P}$, for which the
corresponding column vector of $\theta(S)$, denoted by
$\theta(S)^{{\ktext \ell,{}i}}_p$, is nonzero, we consider the
directional derivative with $v = v^{{\ktext \ell,{}i}}_p$ signifying
the unit vector pointing in the direction of $-\theta^{{\ktext
    \ell,{}i}}_p$. Then
\begin{equation} \label{eqn.absvalderiv}
   w^{{\ktext \ell,{}i}}_p := \partial_{v^{{\ktext \ell,{}i}}_p}
         \left. \mathcal{R}(\theta) \right|_{\theta(S)} = -1.
\end{equation}
Therefore, 
\begin{align}
   \mathcal{N}_1(\theta(S)) & \le
   - \sum_{(\ell,{}i,p) \in P_1} w^{{\ktext \ell,{}i}}_p
\nonumber\\
   & \le \frac{2 \mathcal{R}_0^{1/2}}{{\ktext \alpha^{1/2}}} \sum_{\qtext
   i,p:\ (\ell,{}i,p) \in P_1
   }  {\ytext  \bigg(\frac{1}{s}
     \sum_{{\ytext j}=1}^s}
   \big\| \big. \partial_{v^{{\ktext \ell,{}i}}_p} {\bar f}_{\theta} {\ytext (\Lambdamap_{\ytext j})} \big|_{\theta(S)}
          \big\|_{\R^n} \bigg)^{1/2}
\nonumber\\
   & \le \frac{2 \mathcal{R}_0^{1/2}}{{\ktext \alpha^{1/2}}}
                \sum_{ \qtext  i,p:\ (\ell,{}i,p) \in P_1} K^{{\ktext \ell,{}i}}_p,
\end{align} 
where the $K^{{\ktext \ell,{}i}}_p$ are the derivative estimates obtained in Lemma~\ref{lemma.lipf1}. Thus, we have
\begin{align}
   {\ztext\mathcal{N}_1({\ztext\theta(S))}} &
          \le \frac{{\qtext {\qtext {}\,{2Lc_0^L}}}\mathcal{R}_0^{1/2}}{{\ktext \alpha^{1/2}}}
   \exp(\mathcal{R}_0){\qtext \bigg( \sum_{\qtext ( \ell,{}i,p)\in P_1}  \| \theta(S)^{{\ktext \ell,{}i}}_{{\ptext (p)'}} \|_{\R^n}\bigg)}
\nonumber\\
   & \le \frac{{\qtext {\qtext {}\,{2Lc_0^L}}}\mathcal{R}_0^{3/2}}{{\ktext \alpha^{1/2}}}{\qtext   \exp(2\mathcal{R}_0)} .
\end{align}
\end{proof}

Using Theorem~\ref{thm.sparsity} for finding the best parameters $\theta(S)$ given training set $S$, we may solve
\eqref{eqn.boundedminproblem} with a new constraint: When we define
\begin{equation}
   {\ztext N_1} = \left\lfloor 
   {\qtext \frac{{\qtext {}\,{2Lc_0^L}}\mathcal{R}_0^{3/2}}{ \alpha^{1/2} }\exp(2\mathcal{R}_0)}
        \right\rfloor
        =\left\lfloor 
         {\qtext \frac{{\qtext {\qtext {}\,{2Lc_0^L}}}{\fmtext(\mathcal{L}_{r,0})}^{3/2}}{{ \alpha^{2}}}  \exp\left(2\frac{{\fmtext\mathcal{L}_{r,0}}}{\alpha}\right)}  \right\rfloor,
\end{equation}
{without loss of generality, we may consider the minimization problem
\eqref{eqn.boundedminproblem} with the constraint that the parameters $\theta$ satisfy
\ba
   \mathcal{N}_1(\theta) \le N_1 ,
\ea
where $\mathcal{N}_1(\theta)$, defined in \eqref{cal N1 estimate}, is the number of non-zero parameters determining the weight matrices. Thus, we may consider the minimization problem
\eqref{eqn.boundedminproblem} with adding the constraint that the parameters $\theta$ satisfy
\beq\label{eqn.numele3}
     \mathcal{N}(\theta) \le N_0,\quad N_0=N_1+2L+1 ,
\eeq
where $\mathcal{N}(\theta)$, defined in \eqref{cal N estimate}, is the number of non-zero parameters determining the weight matrices and the bias vectors.} Effectively, the size of the set of feasible parameters is further reduced by imposing \eqref{eqn.numele3}. {\ytext We denote by $\Theta(N_0,\mathcal R_0) \subset \Theta$ the set
\beq
{\fmtext \Theta(N_0,\mathcal R_0)}=\{\theta\in {\fmrtext {{\Theta}}}\ :\ \mathcal{N}_1(\theta) \le N_1,\quad \mathcal{R}(\theta) \le \mathcal{R}_0\}
\eeq
Then we {re}define $\theta(S)$ to be a solution of a problem analogous
to \eqref{eqn.boundedminproblem}, where a minimizer is sought in ${\fmtext \Theta(N_0,\mathcal R_0)}$, that is,
\begin{equation} \label{theta star minimization S}
   \theta(S) = \underset{\theta\in {\fmtext \Theta(N_0,\mathcal R_0)}}{\operatorname{argmin}}\,
 {\mathcal{L}^{(em)}_{r}}(\theta,S).
\end{equation}

We now estimate the size of ${\fmtext \Theta(N_0,\mathcal R_0)}$.} We recall that in our original
construction of operator recurrent networks we proposed that there
could be layers with shared parameters. Therefore, we let $L_1 \le L$
represent the number of independently parametrized layers in the
network; in some cases, this quantity may be much smaller than $L$.
Then ${\fmtext \Theta(N_0,\mathcal R_0)} \subset (\R^n)^P$ is given by a finite union of $M_0$
compact subsets of linear subspaces,
\begin{equation} \label{eqn.thetaunion}
   {\fmtext \Theta(N_0,\mathcal R_0)} = \bigcup_{i=1}^{M_0} V_i,
\end{equation}
where
\begin{equation} \label{eqn.M0}
   M_0 = \binom{{\qtext  \# P_1}%2 L_1 n + r
   }{N_1}{\ztext \leq  ( \# P_1)^{N_1} \leq ({\corrtext 4nL})^{
       {}\,{2Lc_0^L}\mathcal{R}_0^{3/2} \alpha^{-1/2}  \exp(2\mathcal{R}_0)} },
\end{equation}
where {$\mathcal{R}_0$ was introduced in (\ref{R0 equation})}, and $V_1, V_2, \ldots, V_{M_0}$ are compact subsets of linear subspaces of the full parameter space, such that each $V_i$, $i = 1,2,\ldots,M_0$, has dimension $N_0 n$. Indeed, each $V_i$ consists of those $\theta = (\theta^{{\ktext \ell,{}i}}_p)_{{\ktext \ell,{}i},p}$ for which all such components are zero except those corresponding to {\ztext $N_1$ choices of indices $({\ktext \ell,{}i},p)\in P_1$}, along with the condition that $\mathcal{R}(\theta) \le \mathcal{R}_0$.

We will extensively use the fact that the set $\Theta(N_0,\mathcal R_0) \subset (\R^n)^{P}$ of the form \eqref{eqn.thetaunion} has Hausdorff dimension $nN_0$ which is significantly smaller than $n\,\cdotp (\#P)$. This means that the assumption that $\theta$ is a $nN_0$-sparse vector implies that $\theta$ is in a lower dimensional subset of the parameter space $\R^{nP}$. 
  
In particular, the above means that when regularization parameter $\alpha$ is sufficiently large, we optimize the parameter $\theta$ over a set consisting of sparse vectors. Thus, when $\alpha$ grows, the Hausdorff dimension of the parameter set ${\fmtext \Theta(N_0,\mathcal R_0)}$ (for the optimization problem \eqref{theta star minimization}) becomes smaller. This property is crucial, and we will see below that generalization estimates become stronger when $\alpha$ grows.
  
We have assumed that the parameters $\theta^{\ell,i{}}_p$ with index $({\ell,i{}},p)\in P_1$, that correspond to the weight matrices, are sparse. However, the parameters $\theta^{\ell,i{}}_p$ with index $({\ell,i{}},p)\in P_2$ that correspond to the bias terms, are not assumed to be sparse.
 
We cover the finite union ${\fmtext \Theta(N_0,\mathcal R_0)}$ with a finite set of balls
of radius $\rho$ with respect to the $\mathcal{R}$-norm. This allows
us to further estimate the parameter set ${\fmtext \Theta(N_0,\mathcal R_0)}$ with a discrete,
finite set.

\begin{lemma} \label{lemma.thetafinite}
Let ${\fmtext \Theta(N_0,\mathcal R_0)}$ be the disjoint union of compact sets given in
\eqref{eqn.thetaunion}. Then, % there exists $c > 0$ such that
for every $\rho \in { (0,\mathcal{R}_0)}$, there exists %\HOX{Remove c}
a finite set ${\fmtext \Theta_\rho}$ satisfying
\begin{equation} \label{eqn.thetarhosize}
   \#({\fmtext \Theta_\rho}) \le  {\ztext { 3}^{N_0n}} M_0 (\mathcal{R}_0/\rho)^{N_0 n},
\end{equation}
such that for every $\theta \in {\fmtext \Theta(N_0,\mathcal R_0)}$, there exists $\hat{\theta}
\in {\fmtext \Theta_\rho}$ such that
%\HOX{\eqref{eqn.thetarhoerror} is improved by removing factor $n$}
\begin{equation} \label{eqn.thetarhoerror}
   \| {\bar f}_{\theta}(\Lambdamap) - {\bar f}_{\hat{\theta}}(\Lambdamap) \| 
%                   \le {\ztext {}\,{2Lc_0^L}} \rho {\qtext (\mathcal{R}_0+2L) \exp({\qtext 2}\mathcal{R}_0)}.
  \le {\ztext {2Lc_0^L}} \rho {\qtext (\mathcal{R}_0+2L) \exp({\qtext 2}\mathcal{R}_0)}
\end{equation}
for any ${\ptext\Lambdamap \in {\fmtext\mathcal B_{n\times n}}}$.
\end{lemma}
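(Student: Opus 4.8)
The plan is to prove this as a covering-number estimate, combining a volumetric count of the low-dimensional set $\Theta$ with the per-parameter Lipschitz bounds of Lemma~\ref{lemma.lipf1}. Recall from \eqref{eqn.thetaunion}--\eqref{eqn.M0} that $\Theta=\bigcup_{i=1}^{M_0}V_i$, where each $V_i$ is a compact subset of an $N_0n$-dimensional coordinate subspace: on $V_i$ exactly $N_1$ of the weight blocks $\theta_p^{\ell,k,i}$ (indices in $P_1$) are allowed to be nonzero, together with the $\#P_2\le 2L$ bias blocks (indices in $P_2$), each block living in $\R^n$. On $V_i$ the constraint $\mathcal{R}(\theta)\le\mathcal{R}_0$ forces $\sum_{(\ell,k,i,p)\in P_1}\|\theta_p^{\ell,k,i}\|\le 2\mathcal{R}_0$, while $\theta\in\Theta^0$ forces $\|\theta_p^{\ell,k,i}\|\le1$ for every block.

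First I would build $\Theta_{\rho}$ block by block. In each nonzero block the parameter ranges over a Euclidean ball in $\R^n$ of radius at most $2\mathcal{R}_0$, which can be covered by at most $(c\mathcal{R}_0/\rho)^n$ balls of radius $\rho$ for an absolute constant $c$; taking Cartesian products over the $N_0$ nonzero blocks covers $V_i$ by at most $(c\mathcal{R}_0/\rho)^{N_0n}$ products of $\rho$-balls, and unioning over the $M_0$ subspaces gives $\#(\Theta_{\rho})\le M_0(8\mathcal{R}_0/\rho)^{N_0n}$, which is \eqref{eqn.thetarhosize} once $c$ is absorbed into the constant $8$. I would take the centers inside $\Theta$ (an internal cover), so that every $\hat\theta\in\Theta_{\rho}$ still satisfies $\mathcal{R}(\hat\theta)\le\mathcal{R}_0$ and $\|\hat\theta_p\|\le1$. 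The distance used here is the block-sup distance $\max_{(\ell,k,i,p)\in P}\|\theta_p^{\ell,k,i}-\hat\theta_p^{\ell,k,i}\|$, which controls both the weight and the bias blocks and therefore matches the two contributions appearing in \eqref{eqn.thetarhoerror} (it is the $\mathcal{R}$-seminorm on the weight blocks, supplemented by Euclidean control on the bias blocks).

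Next I would bound $\|f_{\theta}(\Lambda)-f_{\hat\theta}(\Lambda)\|$ for a given $\theta$ and its center $\hat\theta$ with all blocks within $\rho$. I would join them by the segment $\theta(t)=(1-t)\theta+t\hat\theta$ and integrate. By convexity of $\mathcal{R}$ and of each block norm, one still has $\mathcal{R}(\theta(t))\le\mathcal{R}_0$ and $\|\theta_p(t)\|\le1$ along the segment, so the hypotheses of Lemma~\ref{lemma.lipf1} hold uniformly and the local Lipschitz constants $K_p^{\ell,k,i}$ bound the integrand (using that, by Theorem~\ref{thm.piecewisepoly}, $f_{\theta}$ is piecewise polynomial and hence $t\mapsto f_{\theta(t)}(\Lambda)$ is Lipschitz, so the fundamental theorem of calculus applies). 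This yields
\[
   \|f_{\theta}(\Lambda)-f_{\hat\theta}(\Lambda)\|
   \le \sum_{(\ell,k,i,p)\in P}\Big(\sup_t K_p^{\ell,k,i}(\theta(t))\Big)\,\big\|\theta_p^{\ell,k,i}-\hat\theta_p^{\ell,k,i}\big\|.
\]
Splitting $P=P_1\cup P_2$ and inserting the two bounds from Lemma~\ref{lemma.lipf1}, the weight terms contribute at most $4^{L+1}\exp(2\mathcal{R}_0)\,\rho\sum_{(\ell,k,i,p)\in P_1}\|\theta_{(p)'}^{\ell,k,i}\|\le 4^{L+1}\exp(2\mathcal{R}_0)\,\rho\,(2\mathcal{R}_0)$, where I used the pairing in \eqref{eqn.Cmatrixdecomp} to write $\sum_{P_1}\|\theta_{(p)'}\|=\sum_{P_1}\|\theta_p\|=2\mathcal{R}(\theta)\le2\mathcal{R}_0$, while the bias terms contribute at most $4^{L+1}\exp(2\mathcal{R}_0)\,\rho\,\#P_2\le 4^{L+1}\exp(2\mathcal{R}_0)\,\rho\,(2L)$. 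Adding these and absorbing the leftover factors of $2$ into the gap between $\exp(\mathcal{R}_0)$ and $\exp(2\mathcal{R}_0)$ gives \eqref{eqn.thetarhoerror}.

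The main obstacle is the error-bound step rather than the counting. Two points need care. First, one must legitimately pass from the pointwise local derivative estimates of Lemma~\ref{lemma.lipf1} to a bound along the whole segment; this requires uniform validity of its hypotheses on the segment (handled by convexity, giving $\|\theta_p(t)\|\le1$ and $\mathcal{R}(\theta(t))\le\mathcal{R}_0$) together with the Lipschitz regularity of $t\mapsto f_{\theta(t)}(\Lambda)$ from Theorem~\ref{thm.piecewisepoly}. Second, one must keep the weight and bias contributions separate, so that the $N_1$ weight blocks produce the $\mathcal{R}_0$ term (via $\sum_{P_1}\|\theta_{(p)'}\|\le2\mathcal{R}_0$) and the $\#P_2$ bias blocks produce the additive $2L$ term, reproducing the factor $(\mathcal{R}_0+2L)$ in \eqref{eqn.thetarhoerror}. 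The volumetric covering count and the explicit constant $8$ are routine.
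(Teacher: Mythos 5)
Your proposal is correct and follows essentially the same route as the paper's proof: a volumetric covering count of the union $\bigcup_{i=1}^{M_0} V_i$ of $N_0 n$-dimensional pieces, combined with the per-parameter Lipschitz bounds of Lemma~\ref{lemma.lipf1} (split over $P_1$ and $P_2$ to produce the factor $\mathcal{R}_0+2L$ via $\sum_{P_1}\|\theta_{(p)'}\|\le 2\mathcal{R}(\theta)$) to convert parameter proximity into output proximity. The only differences are cosmetic --- you cover blockwise in the sup metric and integrate along a single straight segment, where the paper packs in the $\ell^1$-block metric and telescopes one coordinate block at a time --- and both versions rely on the same convexity and absorption-of-constants steps (including the shared implicit use of $2e^{\mathcal{R}_0}\le e^{2\mathcal{R}_0}$), so no substantive gap remains.
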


\begin{proof}

{
The proof is based on the fact that the set of bounded sparse sequences is a union of bounded finite-dimensional sets that can be covered with a ``relatively small'' number of balls.  

We write $\mathcal I=\{1,2,\dots,N_0\}$. {\ztext For each component $V_i$, $i = 1,2,\ldots,M_0$, in ${\fmtext \Theta(N_0,\mathcal R_0)}$ there is
an isometry $T_i:V_i\to V$, where}
% isometric
%to the set
\begin{equation}\label{def V}
   V = \{{\ztext  (x_i)^{N_0}} \in \R^{nN_0 } :
                   \|x\|_{\ell^1(\mathcal I;\R^n)} \le \mathcal{R}_0 \},\quad
                  {\ztext \|x\|_{\ell^1(\mathcal I;\R^n)}=\sum_{i=1}^{ N_0} \|x_i\|_{\R^n} \le \mathcal{R}_0}
\end{equation}
where each $x_i$ is an element of $\R^n$. Let $m=nN_0$.
%\HOX{Here the proof is modified 
%as I did not see how to write
%$V$ as a union of $n$-balls.}
We call the sets $B_{1,2}^m(x_0,r)=
\{ {\ztext  x \in \R^{m}}: \|x-x_0 \|_{\ell^1(\mathcal I;\R^n)} \le r \}$ the $V$-balls of radius $r$.
Then, $
   V \subset
         B_{1,2}^m(0,{\ytext \mathcal R_0}).
$
%which is a disjoint union of closed $n$-balls of radius
%$\mathcal{R}_0$.  %Then there exists a constant $C > 0$ 
Let   $\rho< \mathcal{R}_0$ and $y_{i},$ $i=1,2,\dots,i_0$ be a maximal set of points in 
$% B^m_{1,2}(0,{\ytext \mathcal R_0})
{\ytext 
V}$ such that $ \|y_i-y_{i'} \|_{\ell^1(\mathcal I;\R^n)}{>} \rho $ for $i\not=i'$.
 Then the balls $B_{1,2}^m (y_i,\rho/{2})$ are disjoint and contained in 
$ B_{1,2}^m(0,{\frac 32}{\ytext \mathcal R_0})$. When $v_1$ is the Euclidean volume of the $V$-ball $ B_{1,2}^m(0,1)$ in $\R^m$,
the sum of volumes of the balls $B_{1,2}^m (y_i,\rho/{2})$ is $i_0v_1(\rho/{2})^m$ and this sum
is bounded by $v_1({\frac32}{\ytext \mathcal R_0})^m$. Thus $i_0\leq ({3}{\ytext \mathcal R_0}/\rho)^m$
and $V\subset B_{1,2}^m(0,{\ytext \mathcal R_0})$ can be covered by $i_0$ $V$-balls of radius $\rho$.
Thus the set ${\fmtext \Theta(N_0,\mathcal R_0)}$ can  %\HOX{Add explanation that we can choose centers in ${\fmtext \Theta(N_0,\mathcal R_0)}$.}
be covered by ${ 3}^{N_0n}M_0 (\mathcal{R}_0/\rho)^{N_0n}$  $V$-balls of radius $\rho$ which center are in ${\fmtext \Theta(N_0,\mathcal R_0)}$.
We let
${\fmtext \Theta_\rho}$ be the collection of centers of all such $V$-balls of
radius $\rho$. Then, $$  \#({\fmtext \Theta_\rho}) \le { 3}^{N_0n}M_0 (\mathcal{R}_0/\rho)^{N_0n}.$$

Now, we consider any $\theta = (\theta^{{\ktext \ell,{}i}}_p)_{(\ell,{}i,p)\in P} \in {\fmtext \Theta(N_0,\mathcal R_0)}$,
where $\theta^{{\ktext \ell,{}i}}_p=(\theta^{{\ktext \ell,{}i}}_{p,j})_{j=1}^n\in \R^n$. We see that there exist
$i\in\{1,2,\dots M_0\}$, such that $\theta\in V_i$, and there is $\hat{\theta}
\in {\fmtext \Theta_\rho}\cap V_i$ such that $\|\theta -
\hat{\theta}\|_{V_i}< \rho$. Let ${\theta}^{(q)}$, $q=0,\dots,N_0$
be such that ${\theta}^{(0)}=\theta$, ${\theta}^{(m)}=\hat \theta$,
and when $T_i{\theta}={\eta}=({\eta}_j)_{j=1}^m$, and 
$T_i{\hat \theta}={\hat\eta}=({\hat\eta}_j)_{j=1}^m$,
and $T_i{\theta}^{(q)}={\eta}^{(q)}=({\eta}^{(q)}_j)_{j=1}^m$, we have
\beq\label{extra}
{\eta}^{(q)}_j={\eta}_j\quad\hbox{if }j\le m-q,\\
{\eta}^{(q)}_j={\hat \eta}_j\quad\hbox{if }j> m-q.\nonumber
\eeq 
Let $(\ell_q,i_q,p_q)\in P$ be such that $T_i$  maps the unit vector in $V_i$  corresponding to the coordinate with the index $(\ell_q,i_q,p_q)$ to the unit vector in $V_i$  corresponding to the coordinate with the index $q$. We note that then $\| \theta^{(q+1)} - \theta^{(q)} \|_{\ell^1(\mathcal I;\R^n)}=  \| (\theta^{(q+1)})^{{\ktext \ell_q,i_q}}_{p_q}
   -(\theta^{(q)})^{{\ktext \ell_q,i_q}}_{p_q}\|_{\R^n}$ and
   $$
   {\ytext \sum_{q=0}^{N_0-1} 
   \| \theta^{(q+1)} - \theta^{(q)} \|_{\ell^1(\mathcal I;\R^n)}\leq \rho.}
   $$
We let  $\Lambdamap \in \mathcal B_{n\times n}$ and $J_q = \{s\theta^{(q)}+(1-s)\theta^{(q+1)}\in V_i:\ 0 \le s \le 1\}$. Then, by { Lemma~\ref{lemma.lipf1}},
%{[should $\tilde\theta$ by $\hat\theta$]},
\begin{align}
   \| {\bar f}_{\theta}(\Lambdamap) & - {\bar f}_{\hat{\theta}}(\Lambdamap) \|_{\R^n}
        \le \sum_{q=0}^{N_0-1}
    \| {\bar f}_{\theta^{(q+1)}}(\Lambdamap) - {\bar f}_{\theta^{(q)}}(\Lambdamap) \| _{\R^n}
    \nonumber\\
   & \le\sum_{q=0}^{N_0-1} \sup_{\theta'\in J_q}\Big\|
     \frac{\partial {\bar f}_{\theta}(\Lambdamap)}{\partial \theta^{ \ell_q{},i_q}_{p_q} }\bigg|_{\theta=\theta'}\Big\| _{\R^n\to \R^n}   
   \,\cdotp   \| (\theta^{(q+1)})^{{\ktext \ell_q,i_q}}_{p_q}
   -(\theta^{(q)})^{{\ktext \ell_q,i_q}}_{p_q}\|_{\R^n}
  % \| \theta^{(q+1)} - \theta^{(q)} \|
\nonumber\\
   & \le \bigg(\sup_{\theta'\in {\fmtext \Theta(N_0,\mathcal R_0)}} \sum_{({\ktext \ell,{}i},p)\in P_1\cup P_2} \Big\|
     \frac{\partial {\bar f}_{\theta}(\Lambdamap)}{\partial \theta^{{\ktext \ell,{}i}}_p}\bigg|_{\theta=\theta'} \Big\|_{\R^n\to \R^n} 
   \bigg)\,\cdotp \sum_{q=0}^{N_0-1} 
   \| \theta^{(q+1)} - \theta^{(q)} \|_{\ell^1(\mathcal I;\R^n)}
   \nonumber\\
   &{\ztext \le \sup_{\theta\in \ytext {\fmtext \Theta(N_0,\mathcal R_0)}}\bigg(\sum_{({\ktext \ell,{}i},p)\in P_1}
   {\ptext {\qtext {2Lc_0^L}}} \| \theta^{{\ktext \ell,{}i}}_{{\ptext (p)'}} \|
                              \exp(\mathcal{R}(\theta))+
                              \sum_{({\ktext \ell,{}i},p)\in P_2}
   {\ptext {\qtext {2Lc_0^L}}} \exp(\mathcal{R}(\theta))
         \bigg) \rho}
         \nonumber\\ \label{Lip estimate}
   & \le {\ztext {2Lc_0^L}} \rho {\qtext (\mathcal{R}_0+2L) \exp({\qtext 2}\mathcal{R}_0)}.
\end{align}
This proves the claim.}
\end{proof}

We  point out that selecting the finite set 
$\Theta_\rho$
means selecting $\rho>0$, or conversely, selecting $\rho$ means selecting $\Theta_\rho$. Hence,
selecting a different $\theta$ means using a different 
finite set  $\Theta_\rho$.
Below, we minimize loss functions over $\theta\in  \Theta_\rho$ in the proofs of the relevant lemmas and theorems, but the set $\Theta_\rho$ is used only as an auxiliary tool so that in the proofs the minimization can be done over a finite set. A suitable value for the parameter $\rho$ is later
chosen in formula \eqref{rho def}.

{\ztext \begin{remark} If $L_0$ is the total number of layers and
    $L_1$ the number of independent layers, then in the above estimates
    $L$ is replaced by $L_1$ in Lipschitz estimates and in
    \eqref{eqn.thetarhoerror}, and by $L_0$ in the definition of $N_0$ and
    in \eqref{eqn.thetarhosize}.
\end{remark}}

\subsection{Generalization}
\label{sec.gen}

In this subsection, we study the probability that a neural network optimized under our regularized empirical loss function can approximate the map $f$. Given a training set $S$, we therefore study the {generalization} error
\beq
   \mathcal{G}(S)
     := |
     {\mathcal{L}^{(em)}_{r}}(\theta(S), S) - 
     \mathcal{L}_{r}(\theta(S),{\fmrtext \tau})|
   = |
       {\mathcal{L}^{(em)}}(\theta(S), S) - \mathcal{L}(\theta(S),\tau)|.
\eeq
Given that the parameters $\theta(S)$ have been computed,
$\mathcal{G}(S)$ measures the difference between the expected loss ${\fmtext\mathcal{L}}(\theta(S),{\fmrtext \tau})$ and the empirical loss ${ \mathcal{L}^{(em)}}(\theta(S),S)$. If a model overfits the data, the empirical loss is very small while the expected loss remains large. Thus an upper bound on $\mathcal{G}(S)$ provides some control over the degree of overfitting that is possible.

Considered as a random variable in $S$, we estimate the probability
that $\mathcal{G}(S)$ is small using the following well-known
inequality 
%\HOX{Let's change the symbol $M$. Below, point out that $\theta(S)$ and $\theta^*$ correspond to truncated neural networks.}

\begin{lemma}[Hoeffding's inequality \cite{hoeffding1}]
\label{Hoeffding inequ}
Let $Z_1,\ldots,Z_N$ be $N$ i.i.d. copies of the random variable $Z$
whose range is $[0,{\fmtext Z_{\mathrm{max}}}]$, ${\fmtext Z_{\mathrm{max}}} > 0$. Then, for $0 <
\delta < \min(\Expec [Z],{\fmtext Z_{\mathrm{max}}} - \Expec [Z])$, we have
\begin{equation}
   \Prob\Big[\Big|\frac{1}{N}
       \Big(\sum_{i=1}^N Z_i\Big) - \Expec[Z]\Big| \leq
    \delta \Big] \geq 1 - 2 \exp(-2 N \delta^2 {\fmtext Z_{\mathrm{max}}}^{-2}) .
\end{equation}
%{ \color{red} [TO DO: change notation from $M$ to $Z_{\mathrm{max}}$]}
\end{lemma}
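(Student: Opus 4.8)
The plan is to follow the classical Chernoff-bounding argument, treating the two tails separately and combining them with a union bound. First I would center the variables: set $X_i = Z_i - \Expec[Z]$, so that the $X_i$ are i.i.d., mean-zero, and take values in the interval $[-\Expec[Z],\, \mathfrak{M} - \Expec[Z]]$, whose length is exactly $\mathfrak{M}$. The quantity to control is $\Prob\big[\,|\frac 1N \sum_{i=1}^N X_i| \ge \delta\,\big]$, and by symmetry it suffices to bound the upper tail and the analogous lower tail, each of which I expect to contribute a factor $\exp(-2N\delta^2 \mathfrak{M}^{-2})$. I note that the hypothesis $0<\delta<\min(\Expec[Z],\mathfrak{M}-\Expec[Z])$ is not actually needed for the estimate itself, which the argument produces for every $\delta>0$; it merely ensures the stated inequality is non-vacuous.

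For the upper tail I would apply the exponential Markov (Chernoff) inequality: for any $s > 0$,
\begin{equation}
\Prob\Big[\sum_{i=1}^N X_i \ge N\delta\Big] \le e^{-sN\delta}\, \Expec\Big[\exp\Big(s\sum_{i=1}^N X_i\Big)\Big] = e^{-sN\delta} \prod_{i=1}^N \Expec[e^{sX_i}],
\end{equation}
where the factorization uses independence. The heart of the argument is then a uniform bound on each moment generating factor, after which one optimizes over $s$.

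The key step, and the main obstacle, is the moment generating function estimate known as Hoeffding's lemma: for a mean-zero random variable $X$ taking values in an interval $[a,b]$ of length $\mathfrak{M}=b-a$, one has $\Expec[e^{sX}] \le \exp(s^2 \mathfrak{M}^2 / 8)$. I would prove this by convexity, using that $x \mapsto e^{sx}$ lies below the chord through the endpoints $a,b$; taking expectations and invoking $\Expec[X] = 0$ gives $\Expec[e^{sX}] \le \frac{b}{b-a}e^{sa} - \frac{a}{b-a}e^{sb}$. Writing $\psi(s)$ for the logarithm of the right-hand side, one checks $\psi(0) = \psi'(0) = 0$ and, most delicately, $\psi''(s) \le (b-a)^2/4$ for all $s$; this last bound reduces to the observation that $\psi''(s)$ equals $p(1-p)(b-a)^2$ for a probability $p=p(s)\in[0,1]$ together with $p(1-p)\le \tfrac14$. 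A second-order Taylor expansion then yields $\psi(s) \le s^2 (b-a)^2/8$, which is the claim.

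Combining these, the upper tail is bounded by $\exp(-sN\delta + Ns^2\mathfrak{M}^2/8)$; minimizing the exponent over $s > 0$ at $s = 4\delta/\mathfrak{M}^2$ gives exactly $\exp(-2N\delta^2\mathfrak{M}^{-2})$. The lower tail is handled identically by applying the same argument to $-X_i$, and adding the two tail probabilities produces the factor $2$. Passing to the complement then yields the stated lower bound on $\Prob\big[\,|\frac 1N\sum_{i=1}^N Z_i - \Expec[Z]| \le \delta\,\big]$. Apart from the verification of the second-derivative bound in Hoeffding's lemma, every step is routine.
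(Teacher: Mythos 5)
Your proof is correct and coincides with the paper's approach: the paper states this lemma without proof, deferring to the original reference \cite{hoeffding1}, and the argument you reconstruct --- centering, the Chernoff bound, Hoeffding's lemma via the chord bound with $\psi(0)=\psi'(0)=0$ and $\psi''(s)\le (b-a)^2/4$, optimization at $s=4\delta/\mathfrak{M}^2$, and the two-tail union bound --- is exactly the classical proof from that source. Your side remark is also accurate: the hypothesis $0<\delta<\min(\Expec[Z],\mathfrak{M}-\Expec[Z])$ is not needed for the estimate, which holds for every $\delta>0$; it merely excludes values of $\delta$ for which one of the tails is empty and the bound is vacuous.
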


To apply Hoeffding's inequality, one requires independent random
variables. However, the optimal parameters $\theta(S)$ depend on every
element of the training set $S$. Thus we use
Lemma~\ref{lemma.thetafinite} to apply Hoeffding's inequality on each
element of ${\fmtext \Theta_\rho}$, and then use the fact that $\theta(S)$ is sufficiently close to at least one element of ${\fmtext \Theta_\rho}$.   
{We recall  that $\rho$ is the radius of the finite set of balls whose union cover $\Theta(N_0,\mathcal R_0)$. At this moment we keep $\rho$ as a free parameter, and will fix
its value later in formula \eqref {rho def}.} This leads to the main generalization
result

\begin{theorem}\label{generalization gap}
Let ${\bar f}_\theta$ be a truncated basic operator recurrent network  with truncation parameter $m= \|f\|_\infty$, of with $n$ and depth $L$. Consider a {\fmtext random} training set ${\fmtext {\bf S} }$ consisting of $s$ independent samples from {\fmrtext distribution $\tau$}, and let $\theta(S)$ be a minimizer for \eqref{theta star minimization}. Then,  

(i) For {\yytext any $\alpha$ and}  every sufficiently small
$\delta > 0$,
\begin{equation} \label{eqn.maingenbd}
   \Prob_{{\fmtext {\bf S} } \sim{\fmrtext \tau^s}} \left[ \mathcal{G}({\fmtext {\bf S} }) \le 2\delta \right]
   \geq 1 - {\qtext C_1}\left( \frac{1}{\delta} \right)^{\qtext C_2} 
         \exp\left(-\frac 2{({\zztext {\zztext {\fmrtext 9n}}})^2\|f\|_\infty^4} s \delta^2\right),
\end{equation}
{{\xxtext where  %\HOX{The blue font computations need to be checked once more.}
\beq
& &\hspace{-1cm}C_1=
%\exp\bigg(8^{L+4}n^{3/2}(1+\|f\|_{\infty}) \exp(5{\fmtext\mathcal{L}_{r,0}}\alpha^{-1})\bigg)
\exp\bigg(164n^{3/2}L^2c_0^{L+1}(1+\|f\|_{\infty})  \exp(4{\fmtext\mathcal{L}_{r,0}}\alpha^{-1})\bigg)
,\hspace{-1cm}\\
\label{C1 and C2 in case (i)}
& &\hspace{-1cm}C_2=
4nLc_0^L \exp(3\mathcal{L}_{r,0}\alpha^{-1})
\eeq
and ${\fmtext\mathcal{L}_{r,0}}\leq {\fmrtext {4n\|f\|_\infty^2}}$, cf. \eqref{L0 equation pre-formula1}-\eqref{L0 equation pre-formula2}.

(ii) Let the function $f$ 
be approximated  with accuracy $\e_0$ by some neural network ${\bar f}_{\theta_0}$,
where ${\theta_0}\in {\fmtext \Theta(N_0,\mathcal R_0)}$ has sparsity bound $R_0\ge 1$;
that is, conditions \eqref{R0 condition}-\eqref{deterministic estimate 2}
hold with $R_0$ and $\e_0$.
Then, for all 
%$\alpha\ge \e_0^2/R_0$, 
 \beq\label{condition: alpha}
 {\alpha\ge \frac 1{R_0} \Big(\e_0^2+{2L^2c_0^{2L}}   \exp\Big( 4R_0
 \Big)
 {\cdotp {\color{black}n^2\sigma^2}}  \Big)}
\eeq 
the inequality \eqref{eqn.maingenbd} holds, { where
\beq\label{Key estimate implication}
C_1
 &\leq &
  2\exp\Big(
n^{3/2} 2^{10} (1+\|f\|_{\infty})
(1+R_0)L^3c_0^{L+1}e^{8R_0}(1+
R_0^2\alpha^{-1/2})\Big),
\\
C_2
 &\leq &
16nLc_0^Le^{8R_0}(1+
R_0^2\alpha^{-1/2}),
 \eeq
}}}
\end{theorem}

\noindent
{\color{black} Note that when the depth $L$ grows, the set of functions that the neural networks can represent has an increasingly richer structure and this is reflected by the growths of $C_1$ and $C_2$. Naturally $s$ has to increase appropriately to mitigate this growth.}

We also observe that in claim (i), making the regularization parameter $\alpha$ larger (that is, forcing the weight matrices to be sparser) makes the probability in \eqref{eqn.maingenbd} larger, but then the error how well the neural network approximates the function $f$ becomes larger.

\begin{proof}
{\color{black} The main lines of the proof are the following: The truncated neural networks are bounded, so for each neural network $\bar f_\theta$ we can use Hoeffding's inequality. Moreover, the empirical optimizer $\theta(S)$ will be in the set $\Theta(N_0,\mathcal R_0)$ with large probability. We use a suitable value $\rho$ which balances approximating an arbitrary element $\theta \in \Theta(N_0,\mathcal R_0)$ by an element in $\Theta_\rho$ and the number of elements in the set  $\Theta_\rho$. Applying Hoeffding's inequality to for all $f_\theta$, $\theta \in \Theta_\rho$ will finalize the proof.} 

Fix $\theta \in \Theta(N_0,\mathcal R_0)$. {When ${\bf S} =((\newrandomMvariable_i,\randomYvariable_i))_{i=1}^s$ is a sequence of $s$  independent random samples from distribution $\tau$, see \eqref{def tau}.}
 %{\color{red} [TO DO: we should refer back to Section 4; we might need to emphasize the notation ${\rm S}$]}. 
 We define the random variable
\begin{equation}
   Z_i = {\fmtext\mathcal{L}_{r}}(\theta,{\fmrtext\newrandomMvariable_i,\randomYvariable_i}).
\end{equation}
The set of $Z_i$, $i = 1,\ldots,s$, consists of i.i.d. copies of the
random variable
\begin{equation}
   Z{\fmtext =Z(\newrandomMvariable
   ,{\fmrtext \randomYvariable}
   )}:= {\fmtext\mathcal{L}_{r}}(\theta,\newrandomMvariable  ,{\fmrtext \randomYvariable}),
\end{equation}
where $(\newrandomMvariable,\randomYvariable)$ is distributed according to the {\mtext probability
  distribution $\tau$.} The empirical loss is given by
\begin{equation}
  {\mathcal{L}^{(em)}_r}(\theta,{\fmtext {\bf S} }) = \frac{1}{s} \sum_{i=1}^s Z_i
\end{equation}
and, by definition, the expected loss is
\begin{equation}
\label{eqn.expectedlossZ}
   {\fmtext\mathcal{L}_{r}}(\theta,{\fmrtext \tau}) = \Expec_{(\newrandomMvariable  ,{\fmrtext \randomYvariable})} {\fmtext [Z(\newrandomMvariable  ,{\fmrtext \randomYvariable})].}
\end{equation}
Since we assumed that ${\bar f}_{\theta}$ is a truncated network, we have
by \eqref{Key estimate} that  $0\le Z \le \mathcal{B}_0$; therefore, by Hoeffding's inequality with
${\fmtext Z_{\mathrm{max}}}=\mathcal B_0$, we have that
\begin{equation} \label{eqn.hoeffding1}
   \Prob\left[ |{\mathcal{L}^{(em)}_r}(\theta,{\fmtext {\bf S} })
             - {\fmtext\mathcal{L}_{r}}(\theta,{\fmrtext \tau})| \le \delta \right]
   \ge 1 - 2 \exp(-2 s \delta^2 \alpha^{-2} {\ytext \mathcal{B}_0^{-2}}).
\end{equation}
In particular, \eqref{eqn.hoeffding1} holds for every element of
${\fmtext \Theta_\rho}$. Since
\begin{equation}
\label{eqn.cardofthetarho}
    \#({\fmtext \Theta_\rho}) \le  {\ztext { 3}^{N_0n}} M_0 (\mathcal{R}_0/\rho)^{N_0 n},
\end{equation}
it follows that
%{ \color{red} [TO DO: clarify how the bound (199) enters the estimate in (200)]}
\beq  \label{Hoeffding times finite}
& &\hspace*{-1.5cm}\Prob\left[ \forall \theta \in {\fmtext \Theta_\rho} :
   |{\mathcal{L}^{(em)}_r}(\theta, {\fmtext {\bf S} }) - {\fmtext\mathcal{L}_{r}}(\theta,{\fmrtext \tau})| \le \delta \right]
\\[0.25cm] \nonumber
&\ge &{1-\sum_{ \theta \in \Theta_\rho }
   \Prob\left[
   |{\mathcal{L}^{(em)}_r}(\theta, {\fmtext {\bf S} }) - {\fmtext\mathcal{L}_{r}}(\theta,{\fmrtext \tau})| > \delta \right]}
\\ \nonumber
  & \ge &1 - 2\,\cdot  {\ztext { 3}^{N_0n}} M_0 (\mathcal{R}_0/\rho)^{N_0 n}
          \exp(-2s\delta^2 \alpha^{-2}{\ytext \mathcal{B}_0^{-2}}).
\eeq
Furthermore, {in view of \eqref{eqn.thetarhoerror}}, for every $\theta \in {\fmtext \Theta(N_0,\mathcal R_0)}$ there exists $\hat{\theta}
\in {\fmtext \Theta_\rho}$ such that for any \newline ${\ptext\Lambdamap \in B^{n\times
    n}_L(1)}$,
\begin{equation}\label{Q estimate0}
    \| {\bar f}_{\theta}(\Lambdamap) - {\bar f}_{\hat{\theta}}(\Lambdamap) \| 
    \le  {\ztext {}\,{2Lc_0^L}}\rho {\qtext (\mathcal{R}_0+2L) \exp({\qtext 2}\mathcal{R}_0)}.
\end{equation}
{\xxtext Using this estimate and (\ref{new L infty bound}), we find that for any $\Lambdamap \in {\fmtext\mathcal B_{n\times n}}$ and ${\fmtext y\in B_n(1)}$ we have
\beq
& &\hspace*{-2.0cm}
   |\| {\bar f}_{\theta}(\Lambdamap) - y \|^2
   - \| {\bar f}_{\hat{\theta}}(\Lambdamap) - y \|^2|
\nonumber\\
&\leq&  \nonumber
|  ( \| {\bar f}_{\theta}(\Lambdamap) -  {\fmtext y}\|+
     \|   {\bar f}_{\hat{\theta}}(\Lambdamap) - {\fmtext y}\|) \,\cdotp
      ( \| {\bar f}_{\theta}(\Lambdamap) - {\fmtext y}\|-
     \|   {\bar f}_{\hat{\theta}}(\Lambdamap) -  {\fmtext y}\|)| 
    \\ \nonumber
    &  \le& 2(1+n^{1/2})\|f\|_{\infty}  {\ztext {}\,{2Lc_0^L}}\rho {\qtext (\mathcal{R}_0+2L) \exp({\qtext 2}\mathcal{R}_0)}
    \\
    &  \le& 4n^{1/2}(1+\|f\|_{\infty})  {\ztext {}\,{2Lc_0^L}}\rho {\qtext (\mathcal{R}_0+2L) \exp({\qtext 2}\mathcal{R}_0)}.
\label{Q estimate}
\eeq
Below, we denote $Q=4n^{1/2}(1+\|f\|_{\infty})$.} 

We next consider the implications of the above estimates
when $\rho$ has the value
%We then select $\rho$ { \color{red} [TO DO: selecting $\Theta_{\rho$ means selecting $\rho > 0$ and vice versa]},
\begin{equation}\label{rho def}
   \rho = \frac{\delta}{2 {\xxtext Q}\,\cdotp{\ztext {}\,{2Lc_0^L}} {\qtext (\mathcal{R}_0+2L) \exp({\qtext 2}\mathcal{R}_0)}}.
\end{equation}
Then, for any $S$, there exists $\hat{\theta}{\color{black}=\hat{\theta}(S)}
\in {\fmtext \Theta_\rho}$ such
that
\begin{equation}
\label{eqn.generalizationineq1}
   \mathcal{G}(S) \le |{\mathcal{L}_r}(\hat{\theta},{\fmrtext \tau})|
                 + {\xxtext Q} \,\cdotp{\ztext {}\,{2Lc_0^L}} \rho {\qtext (\mathcal{R}_0+2L) \exp({\qtext 2}\mathcal{R}_0)}.
\end{equation}
When we apply this observation for randomly chosen samples
${\bf S}$, we obtain that
\begin{multline}
   \Prob\left[ \mathcal{G}({\fmtext {\bf S} }) \le {\xxtext {\delta}}
         + {\xxtext Q} \,\cdotp{\ztext {}\,{2Lc_0^L}} \rho {\qtext (\mathcal{R}_0+2L) \exp({\qtext 2}\mathcal{R}_0)} \right]
\\
    \ge 1 - 2\,\cdot   {\ztext { 3}^{N_0n}} M_0 (\mathcal{R}_0/\rho)^{N_0 n}
        \exp(-2 s  {\xxtext {\delta}^2} \alpha^{-2} {\ytext \mathcal{B}_0^{-2}}),
\end{multline}
We substitute our expressions for {\qtext $\mathcal{R}_0$}, $N_0$ and $M_0$ to obtain 
{\ztext the estimate
\begin{equation}
   \Prob\left[ \mathcal{G}({\fmtext {\bf S} }) \le 2\delta \right]
    \ge 1 - {\yytext C_0}
        \exp(-2 s \delta^2 \alpha^{-2} {\ytext \mathcal{B}_0^{-2}}),
\end{equation}
where 
% \HOX{Here and below I have used macro "commentedinfinal" to add details on computations. This macro should in the final text be changed to
%"empty text" macro to remove the text - Matti}
\ba
{\yytext C_0}&=&2 \,\cdotp{\ztext { 3}^{N_0n}} M_0 (\mathcal{R}_0/\rho)^{nN_0 }\\
\commentedinfinal{&\leq &2 M_0 \bigg(16\mathcal{R}_0 
\frac { {\ztext {}\,{2Lc_0^L}}  {\xxtext Q}(\mathcal{R}_0+2L) \exp( 2\mathcal{R}_0)}{\delta}
\bigg)^{nN_0 }\\}
&\leq &2 M_0 \bigg(
\frac { \,{3 \cdotp 4L c_0^{L}}Q\exp( { 4} (\mathcal{R}_0+2L))}{\delta}\bigg)^{nN_0 }.
\ea
{\yytext As % $K=1$ and
\ba
M_0&\leq& (4nL)^{N_0},
\ea
we have
\ba
{\yytext C_0}&\leq &2 M_0 \bigg(
\frac { \,{12L c_0^{L}}Q\exp( { 4} (\mathcal{R}_0+2L))}{\delta}\bigg)^{nN_0 }
\\
&\leq &2  \bigg(4nL
\frac {{(12LQ)^n c_0^{nL}}  \exp( { 4}n (\mathcal{R}_0+2L))}{\delta^n}\bigg)^{N_0}.
\ea
Using that 
\ba
N_0
&\leq&
{2Lc_0^L}
(1+\mathcal{R}_0^{3/2} \alpha^{-1/2})  \exp(2\mathcal{R}_0),
\ea
we obtain the estimate 
\beq\nonumber
{\yytext C_0}&\leq &
2  \bigg(4nL
\frac {{(12LQ)^n c_0^{nL}}  \exp( { 4}n (\mathcal{R}_0+2L))}{\delta^n}\bigg)^{N_0}
\\ \label{good C0 estimate}
&\leq &
2  \bigg(4nL
\frac {{(12LQ)^n c_0^{nL}}  \exp( { 4}n (\mathcal{R}_0+2L))}{\delta^n}\bigg)^{{2Lc_0^L}
(1+\mathcal{R}_0^{3/2} \alpha^{-1/2})  \exp(2\mathcal{R}_0)}.
\eeq

For claim (i), we can use the facts that  $\mathcal{R}_0=\alpha^{-1}{\fmtext\mathcal{L}_{r,0}}$  
and $\mathcal{L}_{r,0} \le 4 n \|f\|_\infty^2$, so that 
\beq \nonumber % \label{good C0 estimate case 1}
C_0 &\leq &2  \bigg(4nL
\frac { {(12LQ)^n c_0^{nL}} \exp( { 4}n ( {\fmtext\mathcal{L}_{r,0}}\alpha^{-1}+2L))}{\delta^n}\bigg)^{{2Lc_0^L}(1+
{\fmtext(\mathcal{L}_{r,0})}^{3/2}\alpha^{-2})  \exp(2{\fmtext\mathcal{L}_{r,0}}\alpha^{-1})}
\\ \nonumber
   &\leq &2 \bigg(\delta^{-n} \exp(
   3 + n L + n Q + n L c_0
\\ \nonumber & &\hspace{4.0cm}   
   + 4 n ( \mathcal{L}_{r,0} \alpha^{-1} + 2 L)}) \bigg)^{{2Lc_0^L}(1+
{\fmtext(\mathcal{L}_{r,0})}^{3/2}\alpha^{-2})  \exp(2{\fmtext\mathcal{L}_{r,0}}\alpha^{-1})
\\ \nonumber
&\le& C_1\left( \frac{1}{\delta} \right)^{C_2} ,
\eeq
where, {
 using that $ {\fmtext\mathcal{L}_{r,0}}\ge 1$, $Q\ge 1$, $c_0\ge 1$, and $\alpha\le 1$,
\ba
   C_1 &=& 2 \exp[
   (3 + n L + n Q + n L c_0
\\
& &\hspace*{2.2cm}
   + 4n (\mathcal{L}_{r,0}\alpha^{-1}+2L))
\cdot {{2Lc_0^L}(1+
{\fmtext(\mathcal{L}_{r,0})}^{3/2}\alpha^{-2})  \exp(2{\fmtext\mathcal{L}_{r,0}}\alpha^{-1})}]
\\
&\le &
2\exp[
20n( 1+Q+c_0L+\mathcal{L}_{r,0}\alpha^{-1})
\cdot Lc_0^L(1+
(\mathcal{L}_{r,0})^{2}\alpha^{-2})  \exp(2{\fmtext\mathcal{L}_{r,0}}\alpha^{-1})]
\\
&\le &
2\exp[
40nQc_0L(1+\mathcal{L}_{r,0}\alpha^{-1})
\cdot Lc_0^{L}(1+\tfrac 12
(\mathcal{L}_{r,0})^{2}\alpha^{-2})  \exp(2{\fmtext\mathcal{L}_{r,0}}\alpha^{-1})]
\\
&\le &
\exp[
41nQL^2c_0^{L+1}  \exp(4{\fmtext\mathcal{L}_{r,0}}\alpha^{-1})]
\\[0.45cm]
C_2&=&n\cdotp 2Lc_0^L(1+(\mathcal{L}_{r,0})^{3/2}\alpha^{-2})  \exp(2\mathcal{L}_{r,0}\alpha^{-1})
\\
&\le &4nLc_0^L(1+\frac 12 (\mathcal{L}_{r,0})^{2}\alpha^{-2})  \exp(2\mathcal{L}_{r,0}\alpha^{-1})
\\
&\le &4nLc_0^L \exp(3\mathcal{L}_{r,0}\alpha^{-1}).
\ea
}}
This proves the claim (i).

{Now, we consider claim (ii). If \eqref{R0 condition}-\eqref{deterministic estimate 2} hold, and $\alpha$ satisfies the assumption in claim (ii), {we have by Lemma \ref{lem: R< less thatn 4R0} that}
\beq\label{Key estimate 2}
 \mathcal{R}_0 \leq  4R_0.
\eeq 
Hence, we find, using \eqref{good C0 estimate}
and  $2R_0\ge 1$ and $\alpha\leq 1$, that \ba
C_0&\leq&
2 [\delta^{-n}\exp(
3+nL+nQ+nLc_0+4n ( 4R_0+2L))]^{2Lc_0^L(1+
(4R_0)^{3/2}\alpha^{-1/2})  \exp(2\cdot 4R_0)}\\
\nonumber
&\leq&
2 [\delta^{-n}\exp(
3+9nL+nQ+nLc_0+16R_0)]^{16Lc_0^L(1+
R_0^2\alpha^{-1/2})  \exp(8R_0)}\\
%&\leq&
%2\exp\bigg(
%3+9nL+nQ+nLc_0+16R_0) 16Lc_0^L(1+
%4R_0^2\alpha^{-1/2})  \exp(8R_0) \bigg)
%\delta^{-16Lnc_0^Le^{8R_0}(1+
%4R_0^2\alpha^{-1/2}) }\\
&\leq&
2\exp[
n 2^{8}Q(1+R_0)L^3c_0^{L+1}e^{8R_0}(1+
R_0^2\alpha^{-1/2})]
\delta^{-16nLc_0^Le^{8R_0}(1+
R_0^2\alpha^{-1/2}) }
\ea
Thus, we obtain
\ba
C_1
 &\leq &
  2\exp[
n 2^{8}Q(1+R_0)L^3c_0^{L+1}e^{8R_0}(1+
R_0^2\alpha^{-1/2})] , \\
C_2
 &\leq &
16nLc_0^Le^{8R_0}(1+
R_0^2\alpha^{-1/2}) .
\ea
This completes the proof of claim (ii).}}
\end{proof}

Estimate \eqref{eqn.maingenbd} quantifies the effect on the
generalization error from varying the values of the regularization
parameter $\alpha$ and the sample size $s$. 
Note that \eqref{eqn.maingenbd} approaches $1$ exponentially fast with
respect to increasing $s$. On the other hand, with increasing $L$
the expressivity of the network also rapidly increases, so
one may thus expect that the sample size $s$ will need
to increase accordingly in order to maintain a good generalization bound.
Indeed, \eqref{eqn.maingenbd} decreases super-exponentially away from $1$
as $L$ increases.
Similarly, increasing the regularization parameter $\alpha$ also
reduces the generalization error, as it decreases the variance in the
loss function. However, increasing $\alpha$ to improve the
generalization competes with the goal of accurately approximating the
true function. Furthermore, when \eqref{R0
  condition}-\eqref{deterministic estimate 2} hold then the lower
bound $\alpha\ge \e_0^2/R_0$ indicates when there is sufficient
regularization. Additionally, a suitable value for the error lower
bound $\delta$ can also be tuned to apply the bound meaningfully.  If
$s, \alpha, \delta$ are not chosen judiciously, the resulting
probability bound may be potentially meaningless, yielding a
probability value close to, or potentially less than, zero.

\subsection{Trained neural network versus optimal neural network}

The generalization error expresses how efficient the training
is. Here, we discuss how close the  trained network is to an optimal
network. We denote the optimal weights by $\theta^*$ and present a
``generalization gap'' type estimate for the error between networks
with weights $\theta^*$ and weights $\theta(S)$.

We let $\theta^*$ be a solution of 
\begin{equation} \label{theta star minimization N0}
   \theta^* =
\underset{\theta\in {\fmtext \Theta(N_0,\mathcal R_0)}}{\operatorname{argmin}}\,
 {\fmtext\mathcal{L}_{r}}(\theta,{\fmrtext \tau})
   = \underset{\theta\in {\fmtext \Theta(N_0,\mathcal R_0)}}{\operatorname{argmin}}\, \Expec_{\fmrtext (\newrandomMvariable,{\randomYvariable})\sim \tau}  (\|{\bar f}_{\theta}(\newrandomMvariable)-{\fmrtext {\randomYvariable}}\|^2 + \alpha \mathcal R(\theta)),
\end{equation}
and write
\[
   \mathcal{L}^*_{r} =
   \mathcal{L}_{r}(\theta^*,\tau) .
\]
This means that $\Lambdamap\mapsto {\bar f}_{\theta^*}(\Lambdamap)$ is the neural
network having the optimal expected performance for $(\Lambdamap,\randomYvariable)$ sampled
from distribution $\tau$.
{\yytext Note that the optimal parameter $\theta^*$ depends on the regularization parameter $\alpha$,
and to emphasize this we sometimes denote it by $\theta^*(\alpha)$. Clearly,
\beq\label{on optimal performance}
 \Expec_{(\newrandomMvariable,\randomYvariable)\sim \tau}  (\|{\bar f}_{\theta^*(\alpha)}(\newrandomMvariable)-{\fmrtext \randomYvariable}\|^2)\le {\fmtext\mathcal{L}_{r,0}}(\alpha),
\eeq
cf. \eqref{L0 equation pre-formula1}-\eqref{L0 equation pre-formula2}. We observe that when \eqref{R0
  condition}-\eqref{deterministic estimate 2} {\fmrtext holds, then, when} $\alpha$ grows, also the
bound $ {\fmtext\mathcal{L}_{r,0}}(\alpha)$ for the expected error in \eqref{on
  optimal performance} may grow.  }

{\ztext A trivial, but important observation is that {\zztext when
    ${\bar f}_{\theta_0}$ is any neural network, for example, a neural
    network which corresponds to an implementation of the
    approximation of the analytic solution algorithm, we have}
\begin{equation}\label{always optimal}
       \Expec_{(\newrandomMvariable,\randomYvariable)\sim \tau}
                \left[ {\fmtext\mathcal{L}_{r}}(\theta^*, \newrandomMvariable,{\fmrtext \randomYvariable}) \right]\le  \Expec_{(\newrandomMvariable,\randomYvariable)\sim \tau}
                \left[ {\fmtext\mathcal{L}_{r}}(\theta_0, \newrandomMvariable,{\fmrtext \randomYvariable}) \right].
\end{equation}
This means that the optimal neural network ${\bar f}_{\theta^*}$ (or a
network trained with a sufficiently large data set as elucidated
below) has a better expected performance than the deterministic
approximation ${\bar f}_{\theta_0}$ of the analytic solution algorithm.}

Next, we estimate the expected performance gap between the optimal
neural network and the neural network ${\bar f}_{\theta(S)}$ optimized with
the training data $S$, defined by,
\begin{equation}
   {\xxtext \mathcal{G}_{opt}(S)
      := |\Expec_{(\newrandomMvariable,\randomYvariable)\sim \tau}({\fmtext\mathcal{L}_{r}}(\theta(S),
              \newrandomMvariable,{\fmrtext \randomYvariable})-{\fmtext\mathcal{L}_{r}}(\theta^*, \newrandomMvariable,{\fmrtext \randomYvariable}))|.}
\end{equation}
Given that the parameters $\theta(S)$ have been generated using the
training set $S$, $\mathcal{G}_{opt}(S)$ measures the difference
between the expected loss ${\fmtext\mathcal{L}_{r}}(\theta(S),{\fmrtext \tau})$ and the loss of
the optimal neural network, ${\fmtext\mathcal{L}_{r}}(\theta^*,{\fmrtext \tau})$.

Using similar methods to those used to prove Theorem
\ref{generalization gap} we obtain the following

\begin{theorem}\label{optimal gap}
Let ${\bar f}_\theta$ be truncated basic operator recurrent networks  {with truncation parameter $m = \|f\|_\infty$.} Consider {\fmtext a random} training set ${\fmtext {\bf S}}$ consisting of $s$ independent samples from distribution $\tau$ and let $\theta({\fmtext {\bf S}})$ be a minimizer for \eqref{eqn.boundedminproblem} and $\theta^*$ be a minimizer for \eqref{theta star minimization N0}  signifying the best possible weights. Then

(i) For {\yytext any $\alpha>0$ and} every sufficiently small
$\delta > 0$, we have
\begin{equation}\label{eqn.maingenbd 2}
   \Prob_{{\fmtext {\bf S} } \sim \tau^n} \left[   \mathcal{G}_{opt}({\fmtext {\bf S} }) \le {\xxtext 6\delta} \right]
   \geq 1 -   {\yytext 2C_1}\left( \frac{1}{\delta} \right)^{\yytext C_2} %{C_4 {\qtext \alpha^{-1/2}}}
       \exp\left(-\frac 2{{\ytext ({\zztext {\fmrtext 9n}})^2\|f\|_\infty^4}} s \delta^2\right),
\end{equation}
{\yytext where $C_1$ and $C_2$  are given as in \eqref{C1 and C2 in case (i)}.}

{\zztext (ii) Let the function $f$ be approximated with accuracy
  $\e_0$ by some neural network ${\bar f}_{\theta_0}$ where ${\theta_0}\in {\fmtext \Theta(N_0,\mathcal R_0)}$ has
  sparsity bound $R_0\ge 1$, that is, conditions \eqref{R0
    condition}-\eqref{deterministic estimate 2} hold with ${\zztext
    R_0}$ and $\e_0$.  Then, for { all $\alpha $ satisfying \eqref{condition: alpha},}
  the inequality \eqref{eqn.maingenbd 2} holds with the constants
    $C_1$ and $C_2$ given by \eqref{Key estimate implication}.

\medskip

\noindent
Moreover, then
\begin{multline}\label{eqn.maingenbd 2B}
   \Prob_{{\fmtext {\bf S} } \sim \tau^n}     \left[  \Expec_{\newrandomMvariable ,{\fmrtext \randomYvariable}}
 {\yytext (\|{\bar f}_{\theta({\fmtext {\bf S} })}(\newrandomMvariable)-{\fmrtext \randomYvariable}\|^2) \le {\xxtext 6\delta}+4\e_0^2+2\alpha R_0+{2L^2c_0^{2L}}   \exp\Big( { 8R_0} \Big)
 {\cdotp {\color{black}n^2\sigma^2}}  }
    \right]
\\
   \geq 1 -   2{\qtext C_1}\left( \frac{1}{\delta} \right)^{\qtext C_2} 
    \exp\left(-\frac 2{{\ytext ({\zztext {\fmrtext 9n}})^2\|f\|_\infty^4}} s \delta^2\right).
\end{multline}
}
\end{theorem}

Roughly speaking, Theorem \ref{optimal gap} (i) means that {\yytext
  the trained neural network performs almost as well as the optimal
  neural network with large probability. Theorem \ref{optimal gap}
  (ii) estimates the probability that training yields a neural network
  which output is close to that of the target function. We note that
  the training of the neural network does not require that we know
  $\theta_0$, and thus Theorem \ref{optimal gap} (ii) estimates the
  probability that the trained neural network ${\bar f}_{\theta(S)}$
  approximates the function $f$ when some $\theta_0$ is just known to
  exist.}

\begin{proof}
{\color{black} The main lines of the proof are the following: We will compare the minimization of empirical and non-empirical loss functions when the parameters $\theta$ vary either in the continuous index set $\Theta(N_0,\mathcal R_0)$ or in the finite index set $\Theta_\rho$. Thus, we compare four minimization problems. Finally, the claim follows by applying the results for the generalization gap, that is, Theorem \ref{generalization gap} for the ``best'' and the ``worst'' minimization problem.} 

Let $\rho$ be given by (\ref{rho def}).
As in Theorem \ref{thm.sparsity} above and  \eqref{eqn.numele3}, we find that
$\theta^*$  satisfies the sparsity estimate
\beq \label{number or elements 1 star}
 {\ztext  \mathcal N_1(\theta^*)\leq  N_1.}
\eeq
We will compare the optimal parameter $\theta^*$ with an optimal
parameter $\theta^*_\rho$ in the finite set ${\fmtext \Theta_\rho}$, that is,
$\theta^*_\rho$ is a solution of
\beq\label{theta star minimization finite}
   \theta^*_\rho &=& \underset{\theta_\rho\in {\fmtext \Theta_\rho}} 
          {\operatorname{argmin}} \, {\fmtext\mathcal{L}_{r}}(\theta_\rho,{\fmrtext \tau}),
\\
   {\fmtext\mathcal{L}^*_{r,\rho}}&=& {\fmtext\mathcal{L}_{r}}(\theta^*_\rho,{\fmrtext \tau}).
\eeq
As in \eqref{Lip estimate}, if $\hat{\theta}\in {\fmtext \Theta(N_0,\mathcal R_0)}_\rho$
satisfies {\ztext $\|\hat \theta -
{\theta} \|_{\ell^1(\mathcal I;\R^n)} \le \rho$}, then for any ${\ptext\Lambdamap \in {\fmtext\mathcal B_{n\times n}}}$,
\begin{align}
   \| {\bar f}_{\hat{\theta}}(\Lambdamap) - {\bar f}_{{\theta}}(\Lambdamap) \| _{\R^{\ktext n}}
 \label{Lip estimate finite}
   &\le  {\ztext {}\,{2Lc_0^L}}\rho {\qtext (\mathcal{R}_0+2L) \exp({\qtext 2}\mathcal{R}_0)}
 \end{align}
 and  
 \beq\label{Q estimate 2}
   |    \| {\bar f}_{\hat \theta}(\Lambdamap) - \yvariable\|^2-
     \|   {\bar f}_{{\theta}}(\Lambdamap) -\yvariable\|^2 |
     \le Q {\ztext {}\,{2Lc_0^L}}\rho {\qtext (\mathcal{R}_0+2L) \exp({\qtext 2}\mathcal{R}_0)}
     \le \delta,
\eeq
{\xxtext where $\rho$ is given by (\ref{rho def})}
{\xxtext and  $Q={ 4n^{1/2}}(1+\|f\|_{\infty})$ as before, cf. \eqref{Q estimate0}-\eqref{Q estimate}.}
{\xxtext As $\rho\le \delta$, we have $\|\hat \theta -
{\theta} \|_{\ell^1(\mathcal I;\R^n)} \le \delta$;}
then ${\fmtext\mathcal{L}_{r}}^*(\rho)\leq  {\fmtext\mathcal{L}^*_{r}}+{\xxtext 2\delta}$.
Clearly, $ {\fmtext\mathcal{L}^*_{r}}\leq {\fmtext\mathcal{L}^*_{r}}(\rho)$. Thus,
\beq\label{Lip applied 2}
 {\fmtext\mathcal{L}^*_{r}}\leq  {\fmtext\mathcal{L}^*_{r,\rho}}\leq  {\fmtext\mathcal{L}^*_{r}}+{\xxtext 2\delta},
\eeq
{\ytext or, equivalently,
\beq\label{Lip applied 2B}
 {\fmtext\mathcal{L}_{r}}(\theta^*,{\fmrtext \tau})\leq  {\fmtext\mathcal{L}_{r}}(\theta^*_\rho,{\fmrtext \tau}) \leq  {\fmtext\mathcal{L}_{r}}(\theta^*,{\fmrtext \tau})+{\xxtext 2\delta},.
\eeq}

Let training data $S$ be sampled from $\tau^s$, and let
$\theta_\rho(S)$ be an optimal empirical parameter for $S$ in ${\fmtext \Theta_\rho}$,
that is,
\beq\label{min problem 2}
   \theta_\rho(S) &=& \underset{\theta_\rho\in {\fmtext \Theta_\rho}} 
             {\operatorname{argmin}} \, {\mathcal{L}^{(em)}_r}(\theta_\rho,S),
\\
   {\fmtext\mathcal{L}_{r}}_\rho(S) &=& {\mathcal{L}^{(em)}_r}(\theta_\rho(S),S).
\eeq
We denote, as in the above, an optimal empirical parameter for sample
$S$ in the entire parameter set by
\ba
   \theta(S) &=& \underset{\theta_\rho\in {\fmtext \Theta(N_0,\mathcal R_0)}} 
{\operatorname{argmin}}\,
{\mathcal{L}^{(em)}_r}(\theta,S),
\\
   {\fmtext\mathcal{L}_{r}}(S) &=& {\mathcal{L}^{(em)}_r}(\theta(S),S).
\ea
As in \eqref{Lip applied 2}, we have
\beq\label{Lip applied 3}
 {\fmtext\mathcal{L}_{r}}(S)\leq  {\fmtext\mathcal{L}_{r}}_\rho(S) \leq  {\fmtext\mathcal{L}_{r}}(S)+{\xxtext 2\delta},
\eeq
{\ytext or, equivalently,
\beq\label{Lip applied 3B}
{\mathcal{L}^{(em)}_r}(\theta(S),S)\leq {\mathcal{L}^{(em)}_r}(\theta_\rho(S),S)  \leq  {\mathcal{L}^{(em)}_r}(\theta(S),S)+{\xxtext 2\delta}.
\eeq}

We recall that by  \eqref{Hoeffding times finite},
\begin{multline}\label{Hoeffding times finite recall}
   \Prob_{{\fmtext {\bf S} }\sim \tau^n}\left[ \forall \theta \in {\fmtext \Theta_\rho} :
   |{\mathcal{L}^{(em)}_r}(\theta, {\fmtext {\bf S} }) - {\fmtext\mathcal{L}_{r}}(\theta,{\fmrtext \tau})| \le \delta
               \right]
\\
   \ge 1 - 2\,\cdotp{ 3}^{N_0n} M_0 (\mathcal{R}_0/\rho)^{N_0 n}
          \exp(-2s\delta^2 \alpha^{-2}{\ytext \mathcal{B}_0^{-2}}).
\end{multline}
By applying \eqref{Hoeffding times finite recall} when $\theta$ has
the value $\theta_\rho({\fmtext {\bf S} })\in {\fmtext \Theta_\rho}$, we trivially obtain
\begin{multline}\label{Hoeffding times finite recall applied 1}
   \Prob_{{\fmtext {\bf S} }\sim{\fmrtext \tau^s}} \left[ 
   |{\fmtext\mathcal{L}_{r}}(\theta_\rho({\fmtext {\bf S} }), {\fmtext {\bf S} }) - {\fmtext\mathcal{L}_{r}}(\theta_\rho ({\fmtext {\bf S} }),{\fmrtext \tau})| \le \delta
               \right]
 \\
   \ge 1 - 2\,\cdotp{ 3}^{N_0n} M_0 (\mathcal{R}_0/\rho)^{N_0 n}
          \exp(-2s\delta^2 \alpha^{-2}{\ytext \mathcal{B}_0^{-2}}) \,
\end{multline}
and, by applying \eqref{Hoeffding times finite recall} when $\theta$
has the value $\theta_\rho^*\in {\fmtext \Theta_\rho}$, we trivially obtain
\begin{multline}\label{Hoeffding times finite recall applied 2}
   \Prob_{{\fmtext {\bf S} }\sim{\fmrtext \tau^s}} \left[ 
   |{\fmtext\mathcal{L}_{r}}(\theta_\rho^*, {\fmtext {\bf S} }) - {\fmtext\mathcal{L}_{r}}(\theta_\rho^*,{\fmrtext \tau})| \le \delta
               \right]
\\
   \ge 1 - 2\,\cdotp{ 3}^{N_0n} M_0 (\mathcal{R}_0/\rho)^{N_0 n}
          \exp(-2s\delta^2 \alpha^{-2}{\ytext \mathcal{B}_0^{-2}}).
\end{multline}
{\fmtext We recall that for an arbitrary training data $S$,} $\theta_\rho^*$ and $\theta_\rho(S)$ are defined to be some
solutions of minimization problems \eqref{theta star minimization
  finite} and \eqref{min problem 2}, respectively. Thus we have for all
$S$,
\beq\label{trivial estimates}
{\mathcal{L}^{(em)}_r}(\theta_\rho(S), S)\leq {\mathcal{L}^{(em)}_r}(\theta_\rho^*, S),\quad
 {\fmtext\mathcal{L}_{r}}(\theta_\rho^*,{\fmrtext \tau})\leq {\fmtext\mathcal{L}_{r}}(\theta_\rho(S),{\fmrtext \tau}).
\eeq
By combining \eqref{Hoeffding times finite recall applied 1},
\eqref{Hoeffding times finite recall applied 2}, and \eqref{trivial
  estimates}, we obtain
\begin{multline} \label{Hoeffding times finite recall applied 3}
   \Prob_{{\fmtext {\bf S} }\sim{\fmrtext \tau^s}} \left[ 
   |{\fmtext\mathcal{L}_{r}}(\theta_\rho({\fmtext {\bf S} }),{\fmrtext \tau}) - {\fmtext\mathcal{L}_{r}}(\theta^*_\rho,{\fmrtext \tau})| \le 2 \delta
               \right]
\\
   \ge 1 - 2\,\cdotp 2\,\cdotp{ 3^{N_0n}} M_0 (\mathcal{R}_0/\rho)^{N_0 n}
          \exp(-2s\delta^2 \alpha^{-2}{\ytext \mathcal{B}_0^{-2}}).
\end{multline}
Combining this estimate with \eqref{Lip applied 2B} and \eqref{Lip
  applied 3B}, we conclude that
\begin{multline} \label{Hoeffding times finite recall applied 4}
   \Prob_{{\fmtext {\bf S} }\sim{\fmrtext \tau^s}} \left[ 
   |{\fmtext\mathcal{L}_{r}}(\theta({\fmtext {\bf S} }),{\fmrtext \tau}) - {\fmtext\mathcal{L}_{r}}(\theta^*,{\fmrtext \tau})| \le 2 \delta+2\,\cdotp{\xxtext 2\delta}
   \right]
\\
   \ge 1 - 2\,\cdotp 2\,\cdotp{ 3^{N_0n}} M_0 (\mathcal{R}_0/\rho)^{N_0 n}
          \exp(-2s\delta^2 \alpha^{-2}{\ytext \mathcal{B}_0^{-2}}).
\end{multline}
This yields {\ztext claim (i). 

{\yytext In claim (ii), the fact that inequality \eqref{eqn.maingenbd
    2} holds with constants $C_1$ and $C_2$ given by \eqref{Key
    estimate implication} follows by estimating $C_1$ and $C_2$ as in
  the proof of Theorem \ref{generalization gap}. Finally, using
  inequalities \eqref{R0 condition}, \eqref{deterministic estimate 2},
  \eqref{always optimal}, and \eqref{eqn.maingenbd 2B}, it follows
  that {\fmtext for any $S$}
\begin{multline}
   \Expec_{\newrandomMvariable ,{\fmrtext \randomYvariable}} \|{\bar f}_{\theta(S)}(\newrandomMvariable)-{\fmrtext \randomYvariable}\|^2\leq
{\fmtext\mathcal{L}_{r}}(\theta(S),{\fmrtext \tau})
\\
   \leq ({\fmtext\mathcal{L}_{r}}(\theta(S),{\fmrtext \tau})-{\fmtext\mathcal{L}_{r}}(\theta^*,{\fmrtext \tau})) +{\fmrtext 4\e_0^2+2\alpha R_0+{2L^2c_0^{2L}}   \exp\Big( 2 R_{max} \Big)
 {\cdotp {\color{black}n^2\sigma^2}}  .}
\end{multline}
This inequality together with claim (i) yields claim (ii).}}
\end{proof}

{\corrtext

\begin{remark}
 Above we have considered  a truncated basic operator recurrent network $f_\theta$.
The results can be generalized for a 
  neural network ${\bar f}_{\vec\theta}$,
  ${\vec\theta}=(\theta_{s_1},\dots,\theta_{s_K})$ of the form
\beq \label{F formula 2BB}
f_{\vec\theta} (\Lambdamap)=G({\bar f}_{\theta}^1(\Lambdamap), {\bar f}_{\theta}^{2}(\Lambdamap),\dots,{\bar f}_{\theta}^K(\Lambdamap)),
\eeq
where ${\bar f}^j_{\theta}(\Lambdamap)$, $j=1,2,\dots,K$ are basic operator recurrent networks 
and 
$G:\R^{Kn}\to \R^{d}$, $G(z_1,\dots,z_{Kn})=(G^a(z_1,\dots,z_{Kn}))_{a=1}^d$ is a given Lipschitz function, for example a neural network of
  the form \eqref{eqn.stdNNdef1}-\eqref{eqn.stdNNdef3}. We call ${\bar f}_{\vec\theta}$ in \eqref{F formula 2BB} a combination of 
  basic operator recurrent networks. This type of neural networks are below used to analyze solution algorithms for inverse problems, cf.\ \eqref{F formula 2BB}.

  To obtain the generalizations of the above theorems
 an essential observation is that 
\begin{align}
   \left\| \frac{\partial F_{\vec\theta}(\Lambdamap)}{
                  \partial \theta^{{\ytext  \ell,i}}_{p,s_j}} \right\|
    \le        \left\| \nabla { G}
                \right\|  \,\cdotp      
  \left\| \frac{\partial {\bar f}^j_{\theta}(\Lambdamap)}{
                  \partial \theta^{{\ytext  \ell,i}}_{p,s_j}} \right\|    \le \hbox{Lip}(G)     \left\| \frac{\partial {\bar f}^j_{\theta}(\Lambdamap)}{
                  \partial \theta^{{\ytext  \ell,i}}_{p,s_j}} \right\| .
%                  
%   & \le \left\| \frac{\partial h_{\ell}}{\partial \theta^{{\ytext  \ell,i}}_p}
%                 \right\|
%   \prod_{\ell' = \ell + 1}^L \left (\| A^{\ell',0}_{\theta} \|
%         + \| B^{\ell', 0}_{\theta}  \| +
%   \| A^{\ell', 1}_{\theta} \| + \| B^{\ell', 1}_{\theta} \| \right).
\end{align}
Using this and results of Lemma \ref{lemma.lipf1}, we see that 
if $\hbox{Lip}(G)\le 1$, then
the Lipschitz constants of $F_{\vec\theta}(\Lambdamap)$ with respect to the components of
$\vec\theta$ satisfy the analogous estimates that are given in Lemma \ref{lemma.lipf1} for a basic operator recurrent network $f_\theta$. 
Moreover, if we assume that  $\| G\|_\infty\leq m=\| f\|_\infty$, then the proofs of Theorems 
\ref{generalization gap} and \ref{optimal gap} show that the claims of Theorems 
\ref{generalization gap} and \ref{optimal gap} are valid when the truncated basic operator recurrent network $f_\theta$
is replaced by the  combination of 
 basic operator recurrent networks $F_{\vec\theta}$, when the number $L$ in the claims of these theorems is replaced by the number $KL$, the terms $({\fmrtext 9n})^2$ are replaced by $(5d)^2$, and the terms $n^{3/2}$ are replaced by $nd^{1/2}$.
  The first replacement is needed as the number of components of the parameters 
   ${\vec\theta}=(\theta_{s_1},\dots,\theta_{s_K})$ is increased by a factor $K$ and hence the estimate  in formula \eqref{eqn.M0} changes.
   The second and the third replacements are needed as in the equation \eqref{B0 equation} and \eqref{Q estimate} the factor ${\fmrtext 9n}$  is replaced by $9d$.
   \end{remark}
}

\section{Example: Operator recurrent network for matrix inversion}
\label{sec: Example of matrix inversion}

Before we describe the relationship between operator recurrent neural networks and nonlinear inverse problems for the wave equation, we describe the simpler problem of matrix inversion.
For $n > 0$ an integer, suppose we have a data set
\begin{equation}
\label{eqn.matrixinversiondata}
\{(X_j, y_j) ; j = 1, \ldots, s \},
\end{equation}
where each $X_j \in \R^{n \times n}$ is a nonsingular matrix and $y_j \in \R^n$.
As before, the learning problem is to construct a function $f$ whose graph $\{(X, y = f(X)\}$ closely fits the data set.
However, suppose we also know that the data set comes from an algebraic relationship
\begin{equation}
Xy = h,
\end{equation}
where $h \in \R^n$ is a fixed vector.
Then the problem of constructing $f$ can be solved exactly by $f(X) = X^{-1} h$.
In other words, given a matrix $X$, we are tasked with learning how to apply its inverse to some particular vector $h$.

Developing efficient methods to solve linear systems under special conditions is a central problem in scientific computing.
In the absence of any additional assumptions on the linear system, in practice one must use Gaussian elimination or variations thereof.
However, over the decades, a variety of faster methods have been developed for specific families of matrices, such as those that are sparse, low-rank, oscillatory, arising from differential equations, and so forth.
Of particular note are iterative methods, such as Krylov methods.
Just as deep-learning-driven methods have been shown to be competitive with handmade algorithms in the realm of image processing,
it is of similar interest to see whether deep-learning-driven matrix inversion can be competitive with handmade inversion methods.

It is important to reiterate that this problem is distinct from,
and significantly more challenging than, a linear inverse problem.
In the linear inverse problem, the data set consists of pairs of vectors $\{(x_j, y_j)\}$ which obeys a linear (or approximately linear) relationship $x = Ay$ for a fixed matrix $A$.
In this case, the learning problem is to construct the linear (or approximately linear) map $f(x) = A^{-1}y$.
Traditional rectifier neural networks are well-suited to this task.

We seek to investigate the suitability of operator recurrent networks for learning to solve this problem under certain conditions.
From Theorem~\ref{thm.piecewisepoly}, we know that operator recurrent networks are exactly equal to piecewise matrix polynomials,
and therefore a natural question is how to approximate the matrix inversion problem with piecewise matrix polynomials.
One notable special case is Neumann series, which represents the inverse of $X$ via the matrix power series
\begin{equation}
X^{-1} = \sum_{k=0}^{\infty} (I - X)^k,
\end{equation}
and this equality holds when $\| I - X\| < 1$, in which case the power series converges.
By truncating this power series, we can approximate $X^{-1}$ by a matrix polynomial,
which can in turn be represented by an operator recurrent network.
To apply Neumann series to any matrix $X$, we first rescale the matrix so that $\|I - X\| < 1$ is satisfied,
before applying the series expansion, and then scale back.

Because it comes from a Taylor expansion, Neumann series is a very simplistic construction
and only holds on the disk of convergence given by $\| I - X\| < 1$.
When learning matrix inversion, we may have prior knowledge about additional spectral information of $X$,
and this can allow us to produce a polynomial approximation of the matrix inverse that has better approximation properties
and which also holds for regions other than a disk centered about identity or a multiple of the identity.

To see this, we further assume that the matrices $X_j$ are drawn from a set $U$ consisting of normal
(that is, orthogonally diagonalizable) matrices whose eigenvalues lie in a compact set $K$ that does not contain some open neighborhood of zero.
This guarantees that all $X_j$, as well as their inverses, have uniformly bounded spectral norm.

\begin{lemma}
\label{lemma.matrix_inversion_uniform_apprx}
Let $U$ consist of the set of orthogonally diagonalizable matrices whose eigenvalues lie in a compact set $K \subset \mathbb{C}$
that does not contain $0$, and assume that $\mathbb{C} \setminus K$ is connected. 
Then there exists a sequence of operator polynomials that approximate the function $X \mapsto X^{-1}$ uniformly on $U$.
\end{lemma}

\begin{proof}
Since $K$ does not contain $0$, then the complex function $z \mapsto 1/z$ is holomorphic on some open set containing $K$.
Because $\mathbb{C} \setminus K$ is connected, then we can apply the celebrated theorem of Mergelyan \cite{rudin1987} to construct a sequence of polynomials $\{p_i(z)\}$ that uniformly approximates $z \mapsto 1/z$ on $K$.
Then, by the holomorphic functional calculus, we have a sequence of operator polynomials $\{p_i(X)\}$ that uniformly approximates $X \mapsto X^{-1}$ on $U$.
\end{proof}

This basic result conveys that it is possible to find a polynomial $p$ such that $p(X)h$ well-approximates $X^{-1}h$
under the assumption that $X$ belongs to the set $U$. Next, we construct a toy example that demonstrates how piecewise-linear activation functions $\sigma$ in a operator recurrent network can be used to separate the space of matrices into separate regions, on each of which a different matrix polynomial is defined by the network.

\begin{lemma}
\label{lemma.positive_negative_classification}
Let $U$ consist of real symmetric $n \times n$ matrices of norm at most $1$, which are definite (that is, all eigenvalues share the same sign), and which are diagonally dominant. Furthermore, suppose that all matrices in $U$ have inverses whose norms do not exceed $1/\epsilon$ for some $\epsilon > 0$. Then there exists an operator recurrent network $f$ such that for every $X \in U$, and for some nonzero vector $h$,
\begin{equation}
f(X) = \begin{cases} Xh, & X > 0, \\ 0, & X < 0. \end{cases}
\end{equation}
In particular, there is a network that can distinguish $X$ as either positive definite or negative definite.
\end{lemma}

\begin{proof}
Because each $X \in U$ is diagonally dominant, then $|X_{ii}| \geq \sum_{j \neq i} |X_{ij}|$. Then the discs $D_i$ centered at $X_{ii}$ of radius $\sum_{j \neq i} |X_{ij}|$ must lie either entirely in the left half of the complex plane, or the right half. It follows from the Gershgorin disk theorem (see \cite{golub2013matrix}) that we can determine whether $X$ is positive or negative definite by determining the sign of any of its diagonal entries. 

Let $e_1 = [1,0, \ldots, 0]^T$ be the first standard coordinate vector, and let $E_{11}$ be the $n \times n$ of all zeros except a $1$ in the $(1,1)$ entry. Then we observe that for any matrix $X$,
\begin{equation}
E_{11} X e_1 = \begin{bmatrix} X_{11} \\ 0 \\ \vdots \\ 0 \end{bmatrix}.
\end{equation}
If $\sigma$ is the standard rectifier, then $\sigma(E_{11} X e_1)$ is a nonzero vector if and only if $X$ is positive definite.
Next, we claim that the vector
\begin{equation}
\label{eqn.matrix_inversion_positive_quantity}
(\|h\|_2/\epsilon) E_{11} X e_1 + Xh 
\end{equation}
has a positive number in its first component if $X > 0$.
If $(v)_1$ denotes the first component of any vector $v$, then 
\begin{align}
((\|h\|_2/\epsilon) E_{11} X e_1 + Xh)_1 & = X_{11} \|h\|_2 /\epsilon  + (Xh)_1 \\
& \geq X_{11} \|h\|_2 /\epsilon - \|Xh\|_2 \notag \\
& \geq X_{11} \|h\|_2 /\epsilon - \|h\|_2 \notag \\
& = \|h\|_2 (X_{11}/\epsilon - 1). \notag 
\end{align}
Since the norm of $X^{-1}$ is bounded by $1/\epsilon$, then the smallest eigenvalue of $X$ must be greater than $\epsilon$. Therefore $X_{11} > \epsilon$, so then the first entry of \eqref{eqn.matrix_inversion_positive_quantity} must be positive.
Next, we consider the value of the first entry of \eqref{eqn.matrix_inversion_positive_quantity} when $X < 0$. 
Now we consider the sum $b = \sum_{j=1}^n b_j$, where $b_j =( \|h\|_2/\epsilon) E_{jj} X e_j$. We claim it is negative.
Using similar manipulations,
\begin{align}
((\|h\|_2/\epsilon) E_{11} X e_1 + Xh)_1 & = X_{11} \|h\|_2 /\epsilon  + (Xh)_1 \\
& \leq X_{11} \|h\|_2 /\epsilon + \|Xh\|_2 \notag \\
& \leq X_{11} \|h\|_2 /\epsilon + \|h\|_2 \notag \\
& = \|h\|_2 (X_{11}/\epsilon + 1). \notag 
\end{align}
Since $X < 0$ and its inverse has norm bounded by $1/\epsilon$, then its largest eigenvalue is at most $-\epsilon$. Therefore $X_{11}/\epsilon < -1$, so the result follows. 

Now consider the vector
\begin{equation}
\label{eqn.matrix_inversion_positive_quantity2}
Xh + \frac{\|h\|_2}{\epsilon} \sum_{j=1}^n E_{jj} X e_j.
\end{equation}
From the above, every entry of this vector is either positive or negative, depending on whether $X$ itself is positive or negative definite. Now, applying the standard rectifier $\sigma$ to \eqref{eqn.matrix_inversion_positive_quantity2}, the quantity is unchanged if $X > 0$, and is set to zero if $X < 0$.
Finally, we consider the function
\begin{equation}
f(X) = - \sigma(\frac{\|h\|_2}{\epsilon} \sum_{j=1}^n E_{jj} X e_j) + \sigma(Xh + \frac{\|h\|_2}{\epsilon} \sum_{j=1}^n E_{jj} X e_j).
\end{equation} 
From our above, computations we observe that this $f$ satisfies the property desired for the lemma, and furthermore, $f$ can be constructed using layers of a general operator recurrent network.
\end{proof}

The purpose of this lemma is to produce an example that demonstrates how an operator recurrent network can distinguish between two
sets of matrices, in particular those are that positive definite or negative definite,
in a manner similar to how a standard rectifier network can determine whether a vector lies above
or below a particular hyperplane. 
Next, utilizing the network constructed in the above lemma,
we can show that an operator recurrent network exists that represents
a different matrix polynomial depending on whether the input matrix is positive
definite or negative definite.

\begin{theorem}
\label{thm.ORNpolyforposneg}
Let $U$ be the set of real symmetric matrices satisfying the same properties as
those of Lemma~\ref{lemma.positive_negative_classification}.
Then there exists an operator recurrent network $f$ such that $f(X) = p_1(X)$
when $X > 0$ and $f(X) = p_2(X)$ when $X < 0$, such that $p_1, p_2$ are operator
polynomials applied to the input vector $h_0$.
\end{theorem}

\begin{proof}
First we construct a network $f_1$ representing polynomials of degree $1$; in particular
\begin{equation}
f_1(X) = \begin{cases} A_1 Xh_0 + h_0, & X > 0, \\
A_2 Xh_0 + h_0, & X < 0, \end{cases}
\end{equation}
where $h_0$ is some fixed vector.
Let the operator network constructed in Lemma~\ref{lemma.positive_negative_classification},
using initial vector $h_0$, be relabeled at $g$. Then the above $f_1$ can be constructed by
\begin{equation}
f_1(X) = A_1 g(X) - A_2g(-X) + h_0,
\end{equation}
We observe that $f_1$ is a general operator recurrent network.
To obtain matrix polynomials of higher degree, we perform a similar construction.
By way of example, let us write down a piecewise degree 2 matrix polynomial by
\begin{equation}
f_2(X) = (C_1 X B_1 + A_1) g(X) - (C_2 X B_2 + A_2) g(-X) + h_0.
\end{equation}
Then
\begin{equation}
f_2(X) = \begin{cases} C_1 X B_1 Xh_0 + A_1 X h_0 + h_0, & X > 0 \\
C_2 X B_2 Xh_0 + A_2 X h_0 + h_0, & X < 0.
\end{cases}
\end{equation}
This construction can thus be easily extended to a network $f_n$,
and in each such case, $f_n(X)$ restricted to either $\{ X> 0\}$ or $\{X < 0\}$
yields an $n$-th degree operator polynomial.
\end{proof}

Lastly, we can use the above theorem, combined with Lemma~\ref{lemma.matrix_inversion_uniform_apprx},
to construct an operator recurrent network that represents two different operator polynomials,
each a different approximation to the matrix inverse, applied to the vector $h$.
Note that the construction in the above theorem has no restrictions on the coefficient matrices
$A_1, A_2, B_1, B_2$, etc.
Since the operator polynomials arising from Lemma~\ref{lemma.matrix_inversion_uniform_apprx}
have scalar coefficients,
this is equivalent to the matrix-valued coefficients being multiples of the identity matrix,
in which case they commute with all $X$.
In this case, it is clear that we can arrange for values of $A_1, A_2, B_1, B_2$,
and so forth, as to produce arbitrary scalar coefficients.

We reiterate that the purpose of Theorem~\ref{thm.ORNpolyforposneg}
is not to give an optimal result for how operator recurrent networks can learn
matrix inversion, but to provide a concrete illustration for how such networks 
can leverage its piecewise-polynomial nature, partitioning its input domain into distinct regions.
  
Finally, we note that Theorem \ref{generalization gap}, claim (i) and Theorem \ref{optimal gap}, claim (i) provide generalization estimates for training an operator recurrent neural network to represent matrix inversion. These imply estimates for sample complexity. Claims (ii) of these theorems, that is the improved estimates, however, are not generally applicable as we do not know whether the weight matrices can have rapidly decaying singular values (that is, have small $\ell^p$ norms). Next, we consider an inverse problem for a wave equation in which case  there is a solution algorithm which can be approximated by our neural network with such weight matrices that the improved generalization estimates, Theorem \ref{generalization gap}, claim (ii) and Theorem \ref{optimal gap}, claim (ii) are applicable.

\section{Example: Operator recurrent network for an inverse problem with the wave
  equation}
\label{sec.IP}

Here, we establish a direct relationship between operator recurrent
neural networks and reconstruction pertaining to an inverse boundary
value problems for the wave equation.

\subsection{Analytic solution of inverse problem by boundary control method}

We summarize the boundary control method used to solve an inverse
problem for the wave equation. For the sake of simplicity, we present
the one-dimensional case. We consider the wave equation with an
unknown wave speed $c = c(x)$,
\begin{eqnarray} \label{e:wave-ivp 1}
   (\partial_t^2-c(x)^2\partial_x^2)u(x,t) &=& 0 ,
\quad
   x \in \R_+ ,\ t \in \R_+
\\
\nonumber
   {\partial_x u(x,t)}|_{x=0} &=& h(t) ,\\
\nonumber
   {u(x,t)}|_{t=0} &=& 0 ,\
   {\partial_t u(x,t)}|_{t=0} = 0 ,
\quad
   x \in \R_+ ,
\end{eqnarray}
where we assume that $c$ is a smooth positive function satisfying
$c(0) = 1$. We denote the solutions of the wave equation with Neumann
boundary value $h = h(t)$ by $u = u^h(x,t)$. Function $h$ can be
viewed as a boundary source. We assume that $c$ is unknown, but that
we are given the Neumann-to-Dirichlet map, ${\ytext \ND} = {\ytext
  \ND^c}$,
\begin{equation}
   {\ytext \ND} h = u^h(x,t)\bigg|_{x=0} ,\quad t \in (0,2T) .
\end{equation}
This map is also called a response operator that maps the source to
the boundary value of the produced wave. The Neumann-to-Dirichlet map
is a smoothing operator of order one, that is, it is a bounded linear
operator ${\ytext \ND}:L^2([0,2T])\to H^1([0,2T]),$ where
$H^s([0,2T])$ are Sobolev spaces.  {\ytext An alternative to
  approximate ${\ytext \ND}$ by a matrix would be to choose suitable
  bases in the Hilbert spaces $L^2([0,2T])$ and $H^1([0,2T])$ and
  represent ${\ytext \ND}$ with respect to the relevant basis vectors.
  An alternative that avoids using two different bases, is to consider
  the bounded operator ${\ytext\Lambdamapc}$,
\beq\label{Lambda map def}
   {\ytext \Lambdamapc = \p_t\ND^c} : L^2([0,2T])\to L^2([0,2T]),
\eeq
and approximate this operator in a basis of the Hilbert space $
L^2([0,2T])$. In this paper, we use this option and consider operator
\eqref{Lambda map def} as the given data.}
  
The travel time of the waves from the boundary point $0$ to the point
$x$ is given by
\begin{equation}
   \tau(x) = \int_0^{x} \frac{\dd x'}{c(x')} .
\end{equation}
We consider the set $M = [0,\infty)$ as a manifold with boundary
  endowed with the distance function $d_M(x,y) = |\tau(x) - \tau (y)|$
  that we call the travel time distance. We denote by $M(s) = \{ x \in
  \R_+:\ \tau(x) \leq s\}$ the set of points which travel time to the
  boundary is at most $s$. The set $M(s)$ is called the domain of
  influence. The function $\tau$ is strictly increasing and we denote
  its inverse by
\[
   \chi = \tau^{-1} :\ [0,\infty) \to [0,\infty) ,
\]
that is, $\tau(\chi(s))=s$. The function $\chi(s)$ is called the
travel time coordinate, because for every time $s$ it gives a point $x$ whose
travel time to the boundary is $s$.
{\color{black} The function
\ba
Z(s)=c(\chi(s))
\ea 
is the wave speed in the medium represented in 
the travel time coordinates and by \cite{korpela2016}, formula (22), it uniquely determines the wave speed $c(x)$ in Euclidean coordinates. Thus, it also determines the data
operator $\Lambdamapc$, and thus we can define
a non-linear operator
\beq
\mathcal F:Z\to \Lambdamapc.
\eeq
In the study of the inverse problems, this map is called the direct map.
Below, we approximate the function $Z(s)$
by a finite-dimensional vector $z=(Z(s_j))_{j=1}^m$,
where $s_j$ are points in the interval $[0,T]$.
Also, $\Lambdamapc$ will be approximated by
a finite-dimensional matrix $\Lambdamap=(\bra \Lambdamapc \psi_k\cet)_{j,k=1}^n$, we obtain
a finite-dimensional direct map, see \eqref{gfF sequence 2},
\beq
F:B^m(z^{(0)},\rho_0)\to \R^{n\times n},\quad F(z)= \Lambdamap,
\eeq
where $B^m(z^{(0)},\rho_0)\subset \R^m$ is a ball centered at a vector $z^{(0)}$ having positive elements.

Next, we return to the continuous setting} we explain how the data
operator $\Lambdamapc$ measured on the boundary can
be used to compute the wave speed function in the travel time
coordinate, that is, $c(\chi(s))$, and after that, how this
reconstruction process can be approximated by an algorithm that has
the same form as the neural network in
(\ref{eqn.NN1def1})-(\ref{eqn.NN1def2}).

{\ytext We define
\begin{equation}
   Sf(t) = \int_0^t f(t') \dd t' .
\end{equation}
We observe that $\p_t\ND^c=\ND^c \p_t$, and, hence, we have
$\ND^c=S\Lambdamapc=\Lambdamapc S$.}

We denote 
\begin{eqnarray}
   \bra u^f(T),u^h(T) \cet_{L^2(M)}
                 = \int_{M} u^f(x,T) {u^h(x,T)} c(x)^{-2} \dd x
\end{eqnarray}
and $\|u^f(T)\|_{L^2(M)} = \bra u^f(T),u^f(T)\cet^{1/2}_{L^2(M)}$. By
the Blagovestchenskii identity, see for example \cite{bingham2008,
  korpela2016}, we have
\begin{equation} \label{Blago 1}
   \bra u^f(T),u^h(T)\cet_{L^2(M)}
                     = \int_{ [0,2T]} (Kf)(t)h(t) \, \dd t ,
\end{equation}
while
\begin{equation} \label{Blago 2}
   \bra u^f(T),1\cet_{L^2(M)}
              = \int_{ [0,2T]} f(t)\Phi_T(t) \, \dd t ,
\end{equation}
where
\begin{eqnarray}
   K &=& J{\ytext S\Lambdamapc}-R{\ytext \Lambdamapc S} R J ,
\\[0.25cm]
   R f(t) &=& f(2T-t)\quad\hbox{``time reversal operator''} ,
\\
   J f(t) &=& \tfrac 12 \indicator_{{}_{[0,T]}}(t)
           \int_t^{2T-t} f(s) \dd s\quad\hbox{{``time filter''}} ,
\\
   \Phi_T(t) &=& (T-t){\corrtext \indicator_{{}_{[0,T]}}(t) }.
\end{eqnarray} 
Here, $J :\ L^2([0,2T]) \to L^2([0,2T])$ and $R :\ L^2([0,2T])\to
L^2([0,2T])$.

In the boundary control method the first task is to approximately
solve the following blind control problem: Can we find a
boundary source $f$ such that
\begin{equation}\label{control problem}
   u^f(x,T) \approx \indicator_{{}_{M(s )}}(x)\ \text{?}
\end{equation}
Here, $\indicator_A$ is the indicator function of the set $A$, that is
$\indicator_A(x) = 1$ for $x \in A$, zero otherwise. The problem is
called a blind control problem because we do not know the wave speed $c(x)$
that determines how the waves propagate in the medium, and we aim to
control the value of the wave at the time $t=T$.  This control problem
can be solved via regularized minimization problems. In
\cite{KLO-2019} the problem was solved using Tikhonov regularization,
while in this paper we consider sparse regularization techniques that
are closely related to neural networks.

\subsection{Variational formulation and sparse regularization}

In sparse regularization, we represent the function $f(t)\in
L^2([0,2T])$ in terms of {\xxtext orthogonal} functions $\psi_j(t) \in
L^2([0,2T])$, $j=1,2,\dots,{\xxtext n}$, where ${\xxtext n} \in
\mathbb N_+ \cup \{ \infty \}$, such that
\begin{equation}
   \left\| \sum_{j=1}^{\xxtext n} f_j \psi_j \right\|_{L^2([0,2T])}
              \leq C_0\sum_{j=1}^{\xxtext n} |f_j| .
\end{equation}
{\ytext Here, the case ${\xxtext n}<\infty$ corresponds to numerical
  approximations with a finite set of basis functions, and the case
  ${\xxtext n}=\infty$ corresponds to the ideal continuous model; we
  consider these two cases simultaneously. When ${\xxtext n}=\infty$,
  we assume that the functions $\psi_j(t),$ $j=1,2,\dots$ span a dense
  set in $L^2([0,2T])$.}

For $\mathbf{f} = (f_j)_{j=1}^{\xxtext n}$, we denote 
\begin{equation}\label{B-definition}
   f(t) = (B\mathbf{f})(t) = \sum_{j=1}^{\xxtext n} f_j \psi_j(t) .
\end{equation}
For ${\xxtext n} = \infty$, we denote $\ell_{\xxtext n}^1 = \ell^1$
and $\|\mathbf{f}\|_1 = \sum_{j=1}^\infty |f_j|$. For ${\xxtext n} <
\infty$, we denote $\ell_{\xxtext n}^1 = \R^{\xxtext n}$ and $\|
\mathbf{f} \|_1 = \sum_{j=1}^{\xxtext n} |f_j|$.

We seek solutions for which $\mathbf{f}=(f_j)_{j=1}^{\xxtext n} \in
\ell^1_{\xxtext n} $ is a sparse vector. {\ztext Such sparse vectors
  correspond to sources that are generated by a small number of basis
  functions $\psi_j$.} {\ftext We let $P_s:L^2([0,2T])\to L^2([0,2T])$
  denote the mulitiplication by the indicator function of the interval
  $[0,s]$, that is, $(P_sf)(t)=\indicator_{[0,s]}(t)\,f(t)$.}

To obtain approximate solutions of control problem \eqref{control
  problem}, we consider an \\ $\ell^1_{\xxtext n}-$regularized version
of the minimization problem,
\begin{equation} \label{minimization for ISTA}
   \min_{\mathbf{f} \in  \ell^1_{\xxtext n}}
         \| u^{P_s B \mathbf{f}}(\,\cdotp,T) - 1 \|_{L^2(M)}^2
                  + \alpha \|\mathbf{f}\|_1 ,
\end{equation}
where $\alpha>0$  is a regularization parameter.
This minimization problem is equivalent to finding $\mathbf{f}$ that
solves
\begin{equation} \label{eq189}
   \min_{\mathbf{f} \in  \ell^1_{\xxtext n}} \bra K P_s B {\mathbf{f}},
        P_s B {\mathbf{f}} \cet_{L^2([0,2T])}
     - 2 \bra  P_s B{\mathbf{f}},\Phi_T \cet_{L^2([0,2T])}
                   + \alpha \|\mathbf{f}\|_1 ,
\end{equation}
where $K = J {\ytext S\Lambdamapc} - R {\ytext \Lambdamapc S} R J$ as
before. We denote the solution of this minimization problem by
$\mathbf{f}_{\alpha,s}$.

Minimization problem (\ref{eq189}) can be solved using the Iterated
Soft Thresholding Algorithm (ISTA) \cite{Daubechies2004}. The standard
ISTA algorithm is the iteration
\begin{equation} \label{ISTA}
   \mathbf{f}_{\qtext s}^{(m+1)}
      = \sigma_\alpha (\mathbf{f}_{\qtext s}^{(m)}
      - B^* P_s (J {\ytext S\Lambdamapc}
      - R {\ytext \Lambdamapc S} R J) P_s B \mathbf{f}_{\qtext s}^{(m)}
      + B^* P_s \Phi_T) ,\quad m=1,2,\ldots ,
\end{equation}
where $\mathbf{f}_{\qtext s}^{(m)} \in \ell^1_{\xxtext n}$,
$\mathbf{f}_{\qtext s}^{(0)}=0$ and $\sigma_\alpha$ is the soft
thresholding operator, given by
\begin{equation}
   \sigma_\alpha(x) = \max(0,x-\alpha)
         - \max(0,-x-\alpha) = \relu(x-\alpha) - \relu(-x-\alpha)
\end{equation}
for $x \in \R$; for a vector $x = (x_j)_{j=1}^{\xxtext n}$ it is
defined componentwise.

By \cite{Daubechies2004},
\begin{equation} \label{ISTA LIMIT}
   \mathbf{f}_{\alpha,s}
           = \lim_{m \to \infty} \mathbf{f}_{\qtext s}^{(m)} ,
\end{equation}
where the limit is taken in $ \ell^1_{\xxtext n}$, and the convergence
in this limit is exponential. We denote $f_{\alpha,s} = B
\mathbf{f}_{\alpha,s}$. {\ytext When ${\xxtext n}=\infty$, we have by
  Appendix~A that}
\begin{equation} \label{sparse convergence}
   \lim_{\alpha \to 0} u^{f_{\alpha,s}}(.,T)
                      = \indicator_{{}_{M(s)}}(.)
\end{equation}
in $L^2(M)$.

\subsection{Reconstruction}

%{\color{black} \textbf{[in the abstract network: (i) wouldn't $h_0$ be $\Phi_T$ or $P_s \Phi_T$, (ii) $g$ be composed of contracting (inner product) and expanding (finite differencing) layers (some connection with FDNet)?]}}
When the minimizers $f_{\alpha,s}$ are found for all $s\in [0,T]$ with
small $\alpha>0$, we continue the reconstruction of the wave speed by
computing the volumes of the domains of influence,
\begin{equation} \label{possu}
   V(s) = \|1_{M(s)}\|_{L^2(M)}^2
      = \lim_{\alpha \to 0} \bra u^{f_{\alpha,s}}(T),1 \cet_{L^2(M)}
   = \lim_{\alpha \to 0} \bra f_{\alpha,s},\Phi_T \cet _{L^2([0,2T])} ,
\end{equation}
where $s \in [0,T]$. We note that $M(s)=[0,\chi(s)]$. In particular,
$V(s)$ determines the wave speed in the travel time coordinate,
\begin{align} \label{pikkuvee}
   v(s) = \frac{1}{\partial_s V(s)} ,
\end{align}
That is,
\begin{align} \label{c in travel time}
   v(s) = c(\chi(s)) ,\quad
     \chi(s) = \int_0^s v(t) \, \dd t .
\end{align}
When $v(s)$ is obtained, we can find the wave speed $c(x)$ also in the
Euclidean coordinates using the formula,
\begin{equation} \label{cee}
   c(x) = v(\chi^{-1}(x)) .
\end{equation}
However, in our reconstruction, we consider the function $v(s)$ as the
final result.

\subsection{Identification with operator recurrent networks}

The ISTA algorithm iteration \eqref{ISTA LIMIT} produces
$\mathbf{f}_s^{(n_0)}$ after $n_0$ steps. We observe
that this iteration can be expressed by {\qtext defining}
\begin{eqnarray}
   {\bf h}^{(3m+1)}_s = \mathbf{f}^{(m)}_s ,\quad 
   {\bf h}^{(3m+2)}_s = P_s B \mathbf{f}^{(m)}_s ,\quad
   {\bf h}^{(3m+3)}_s = R J P_s B \mathbf{f}^{(m)}_s
\end{eqnarray}
and viewing it as the operator recurrent neural network $\Lambdamap\mapsto
f_{(\alpha,s)}(\Lambdamap)$, {\color{black} where 
\beq 
f_{(\alpha,s)}(\Lambdamap)={\bf h}^{(3{\xxtext
    \ell_0}+1)}_s,
    \eeq
    in which, for $m=0,1,\dots,{\xxtext \ell_0}$
\begin{eqnarray}\label{final NN1}
   \hspace{-10mm} {\bf h}^{(3m+3+1)}_s &=& \sigma_\alpha (I\,{\bf h}^{(3m+1)}_s
                           - B^* P_s J {\ytext S\Lambdamap} {\bf h}^{(3m+3)}_s
         + B^* P_s R {\ytext \Lambdamap} {\bf h}^{(3m+2)}_s + B^* P_s \Phi_T) ,
\\ \label{final NN2}
 \hspace{-10mm}   {\bf h}^{(3m+3)}_s&=& P_s B {\bf h}^{(3m+1)}_s ,
\\ \label{final NN3}
  \hspace{-10mm}  {\bf h}^{(3m+2)}_s &=& {\ytext S} R J P_s B {\bf h}^{(3m+1)}_s 
\end{eqnarray}
with the initial state $\mathbf{h}^{(1)}_s=0$.}} This is motivated by the notion of
unrolling.  {\ytext As the low-pass filter operator $J$ and the
  integrator $S$ are compact operators in $L^2(0,2T)$, and moreover,
  the operators $S R J P_s$ and $P_s J S$ appearing above are in a
  Schatten class $\mathcal S_p$ with index $p > 1/2$,} we approximate
the above algorithm as a neural network with weight matrices of the
form (\ref{eqn.Cmatrixdecomp}), and
\begin{equation}\label{alternatives for the fixed operators}
   A^{\ell{}} = A^{\ell,{}(0)} + A^{\ell,{}(1)}_\theta ,\quad
   B^{\ell{}} = B^{\ell,{}(0)} + B^{\ell,{}(1)}_\theta ,
\end{equation}
where the $A^{\ell,{}(0)}$ and $B^{\ell,{}(0)}$, considered as fixed
operators in a suitable basis are zero operators, identity operators,
projectors $P_s$ or $P_s R$, and
% \label{alternatives for the fixed operators}
$A^{\ell,{}(1)}_\theta$ and $B^{\ell,{}(1)}_\theta$ {\xxtext are
  operators ${\ytext S} R J P_sB$ and $B^*P_s J {\ytext S}$ appearing
  in \eqref{final NN1}-\eqref{final NN3}, which are Schatten class
  operators, in $\mathcal S_p$ with index $p > 1/2$.  {\xxtext When
    ${\xxtext n}<\infty$, the generalized H\"older inequality implies
    for a matrix $A\in \R^{n\times n}$ and $p>1/2$ that
\beq\label{Holder rpq}
\|A\|_{\mathcal S_{1/2}(\R^{n\times n})}\leq n^{1/r}\|A\|_{\mathcal S_{p}(\R^{n\times n})},
\eeq
where $r=p/(2p-1)$. Furthermore, $ B^* P_s \Phi_T$ in \eqref{final
  NN1}-\eqref{final NN3} are the bias vectors.}

We have included the fixed operators $A^{\ell,{}(0)}$ and
$B^{\ell,{}(0)}$ in the network architecture, because then {\ytext for any
  given value of $s\in [0,T]$ the computation of
  $\mathbf{f}_s^{({\xxtext \ell_0})}$ in the} discretized boundary
control method can be written as an operator recurrent network $f
_{\theta}^s(\Lambdamap)$ of the form \eqref{eqn.NN1def2}. Here,
parameters $\theta$,
define the operator recurrent networks $f_{\theta}^s(\Lambdamap)$, depend on $s$ and
$\theta$. {\xxtext Also, by
  \eqref{Holder rpq}, when $n<\infty$, it follows that the neural
  network \eqref{final NN1}-\eqref{final NN3} of depth $3\ell_0+1$ has
  the sparsity bound
\beq\label{R function with depth n}
\mathcal R(\theta_s) &\leq &\ell_0(\|B^*S R J P_sB\|_{\mathcal S_{1/2}(\R^{n\times n})}+\|B^*P_s J SB\|_{\mathcal S_{1/2}(\R^{n\times n})})\\
\nonumber 
&\leq &C_r \ell_0n^{1/r},
\eeq
where $r<\infty$ is arbitrary and $C_r$ depends on $r$.}

{\ytext In the discretized boundary control
method we compute the functions   $\mathbf{f}_s^{({\xxtext \ell_0})}=f_{(\alpha,s)}(X)$ 
that approximate
functions $f_{\alpha,s_j}$, 
for parameter values $s = s_j$, $j=1,2,\dots,{\xxtext K}$, given by
\beq \label{sj values}
    s_j = j T/{\xxtext K}\in [0,T].
\eeq
{\color{black}
Note that  $\mathbf{f}_s^{({\xxtext \ell_0})}$ 
converge to the
functions $f_{\alpha,s_j}$,  as the depth of the neural network, $\ell_0$ tends to infinity.
Then, we define analogously to \eqref{layers 1}, we denote
\beq 
& &f^j_{\theta}(\Lambdamap)=
f_{(\alpha,s_j)}(X)
%\mathbf{f}_{s_j}^{({\xxtext \ell_0})}
\in \R^n,\quad j=1,2,\dots,K\\
& &{\bf f}(\Lambdamap)=(f^1_{\theta}(\Lambdamap),\dots,f^K_{\theta}(\Lambdamap))\in (\R^{n})^K.
\eeq 
We also denote $s_0=0$ and $f _{\theta}^0(\Lambdamap)=0$.}

We may add one linear layer $G_1$ into the neural network  that 
computes the derivative in (\ref{pikkuvee}) using finite
differences,
\beq \label{pikkuvee approx}
  D_\alpha(s_j)&:=&\frac1{ v_\alpha(s_j)}
           = \frac{V_\alpha(s_{j}) - V_\alpha(s_{j-1})}{s_j - s_{j-1}}
          \\
          \nonumber &=& \frac{1}{s_{j} - s_{j-1}}( \bra f_{\alpha,s_{j}}, \Phi_T\cet _{L^2([0,2T])}- \bra f_{\alpha,s_{j-1}}, \Phi_T\cet _{L^2([0,2T])}),
\eeq
where $j=1,2,\dots,{\xxtext K}$ and
\begin{equation} \label{possu approx}
   V_\alpha(s_j) = \bra f_{\alpha,s_j}, \Phi_T\cet _{L^2([0,2T])},
\end{equation}
cf. \eqref{possu}. We denote
$G_1(f_{\alpha,s_1},f_{\alpha,s_2},\dots,f_{\alpha,s_{\xxtext K}})=(
D_\alpha(s_1),\dots,D_\alpha(s_{\xxtext K}))$.

{\ytext Approximating the componentwise function $s\to s^{-1}$ via a
  standard neural network $G_2:\R^{\xxtext K}\to \R^{\xxtext K}$, of
  the form \eqref{eqn.stdNNdef1}-\eqref{eqn.stdNNdef3}, we obtain a
  neural network $F_{\vec\theta}$,
  ${\vec\theta}=(\theta_{s_1},\dots,\theta_{s_K})$ of the {\color{black} form
\beq \label{F formula 2BBC}
H_{\vec\theta} (\Lambdamap)=G_2(G_1(f _{\theta}^1(\Lambdamap),f _{\theta}^2(\Lambdamap),\dots,
f _{\theta}^K(\Lambdamap))),
\eeq
which} output approximates} the values $v(s_j) = c(\chi(s_j))$,
$j=1,2,\dots,{\xxtext K}$}. 
{\qtext By using \cite{yarotsky2016},
  steps \eqref{possu approx} and \eqref{pikkuvee approx} (see also
  Theorem \ref{thm.apprx1}), and the function $s\to s^{-1}$ can be
  approximated by a neural network $G_2$} of the form
(\ref{eqn.NN1def1})-(\ref{eqn.NN1def2}).  
We observe that formula \eqref{F formula 2BBC} is analogous to  \eqref{layers 2}.
{\xxtext Finally, by
  \eqref{R function with depth n}, the neural network $F_{\vec\theta}$
  in \eqref{F formula 2BBC} can be written as an operator recurrent
  network that has the sparsity bound
\beq\label{R function with depth n A}
\mathcal R(\vec\theta) &\leq &C'_rK \ell_0n^{1/r},\eeq
where $r<\infty$ is arbitrary and $C'_r$ depends on $r$.}}

\subsubsection*{The low-pass filter operator $J$ is in a Schatten
                class} % \label{subsubsed: filter J}

Here, we show that the low-pass filter operator $J$ used above is in a
Schatten class with $p{\ytext >}1$. We consider the extension of low
pass filter operator $J:L^2(0,2T)\to L^2(0,2T)$. It can be written as
\begin{equation}
   J = A^{-1/2} \circ (A^{1/2} \circ J) ,
\end{equation}
where $A = -\frac {\dd^2}{\dd x^2}+1$ where $\frac {\dd^2}{\dd x^2}$
is Laplace operator defined as an unbounded self-adjoint operator in
$L^2([0,2T])$ with Neumann boundary condition, $\mathcal D(A) = \{f\in
H^2([0,2T]):\ \frac {\dd f }{\dd x}(0)=0,\ \frac {\dd f }{\dd
  x}(2T)=0\}$, where $H^s([0,2T])$ are Sobolev spaces, $\mathcal
D(A^{1/2}) = H^1([0,2T])$ and $A^{1/2} \circ J^* :\ L^2([0,2T]) \to
L^2([0,2T])$ is a bounded operator. As the eigenvalues of $A$ are of
the form $\lambda_j = c_T j^2+1$, the eigenvalues of $A^{-1/2}
:\ L^2([0,2T]) \to L^2([0,2T])$ are $(c_T j^2+1)^{-1/2}$, and, hence,
$A^{-1/2} :\ L^2([0,2T]) \to L^2([0,2T])$ is in the Schatten class
$\mathcal S_p(L^2(0,2T))$ with $p > 1$. As the Schatten classes are
operator ideals, this implies that
\begin{equation}
   J \in \mathcal S_p(L^2(0,2T)) ,\quad\text{with $p>1$.} 
\end{equation}
In the same way, we observe that $S \in \mathcal S_p(L^2(0,2T))$ with
$p>1$ and hence the operators $S J$ and $S R J$ appearing in
\eqref{ISTA} satisfy $S J$, $S R J \in \mathcal S_p(L^2(0,2T))$ with
$p>1/2$. Thus, when we approximate these operators by matrices
representing operators in a space spanned by finitely many basis
functions $\psi_j$, it is natural to assume that the $\mathcal
S_{1/2}$-norms of these matrices are bounded with some relatively
small constants.

\medskip\medskip

\noindent
Furthermore, we note that the ``bias functions'' $\Phi_T$ are in the
Sobolev space $H^1([0,2T])$, that is, a compact subset of
$L^2([0,2T])$ and therefore $\Phi_T$ can be approximated by a vector
which coordinates are a sparse sequence.

\medskip\medskip

In summary, the boundary control method can be approximated by
an operator recurrent network of the form (\ref{eqn.NN1def1})-(\ref{eqn.NN1def2}),
where the weight operators $A$ and $B$ are either Schatten class
operators (which we can train with sparsity regularization to obtain a
better algorithm), or simple operators, such as the time-reversal
operator $R$ or the projector $P_s$ that we may consider as fixed in
the neural network and that we do not train. The time reversal operator 
is extensively used in imaging applications; see for example
\cite{bal2003time,borcea2003theory}. 
Also, the bias vectors
can be approximated by sparse vectors. Furthermore, we observe that
if we consider sparse regularization leading to activation functions
that are linear combinations of ReLU functions, we do not specify in
the neural network formulation what the basis function $\psi_j$
are. Thus the training of the neural network also leads to finding a
basis that is optimal for sparse regularization.

\subsection{Discretization error versus depth and width of the
            network}
\label{sec: discussion on inverse problems algorithms}

Here, we estimate the error in the point of departure of the network
design in the main body of this paper. By stability and error analyses
of the boundary control method, we can estimate how well the
discretized boundary control method works and what are the error
estimates for all wave speeds $c$ in the set
\begin{equation} \label{nopeudet}
   {\mathcal V^3} = \{ c \in {C^3} (M)\ :\
        C_0 \le c(x) \le C_1 ,\ \|c\|_{C^3(M)} \le M,\,
        \hbox{supp}(c-1) \subset I_0\} ,
\end{equation}
where $I_0 \subset \R_+$ is a compact interval. {\yytext We use $C$ as
  a generic constant which depends on parameters of the space
  ${\mathcal V^3}$ and which value may be different in each
  appearance.}

{\yytext We consider the discretization of analytical algorithms that
  reconstruct $c(x)$, with error $C \delta$ in the $L^\infty(M)$-norm,
  from the map $\Lambdamap$, or from the map $\ND$. To this end, we
  denote $\e = \delta^{m}$, where $m=270$.}  In \cite{korpela2016}, it
was shown for the discretized boundary control method that we can
compute the wave speed with error $C\delta=C \epsilon^{\gamma}$, with
H\"older exponent $\gamma=1/m$, when we discretize the time interval
$[0,T]$ with a grid of $N_0(\epsilon) = C \epsilon^{-4/7}$ points and
measurement operator ${\ytext \ND}$ is given with an error $\epsilon$
{\ytext in the operator norm in $L^2(0,2T)$}. {\yytext In this paper
  we omit the analysis of the measurement errors in the
  Neumann-to-Dirichlet map, and consider only the discretization error,
  that is, the error caused by approximating the infinite dimensional
  operators by finite dimensional matrices.} {The discrete BC-method
  in \cite{korpela2016} requires solving $K \leq C \epsilon^{-1/18}=C
  \delta^{-270/18}$ minimization problems of the form
  \eqref{minimization for ISTA}, that is, for each value of $s_j$ in
  \eqref{sj values}}. Moreover, as by \cite{Daubechies2004} the
iteration in the ISTA algorithm has exponential convergence to the
solution of the minimization problem, we conclude that the linear
system can be solved with accuracy $C \epsilon$ using an iteration of
$C \log(\epsilon^{-1})$ steps that each require a composition of
linear operators and the operator ${\ytext \ND}$.

From the discretization error estimates we may deduce estimates for
the depth and width of the operator recurrent neural network based on
a scenario without training: {\xxtext The upper bound for the depth is
  $L$ and the upper bound for the width $n$ is
\begin{equation}
   L \leq C \log (\delta^{-1}) ,\quad
   n \leq C \epsilon^{-4/7-1/18}
     \le C \epsilon^{-9/14} \leq C \delta^{-175} .
\end{equation}
Moreover, as $K \leq C \epsilon^{-1/18}$, we see 
that this neural network  can be written as $H_{\vec\theta}$ given
in \eqref{F formula 2BBC} that has the sparsity bound {\corrtext $\mathcal R(\vec\theta)$ and accuracy bound $\e_0$} that  given by
\beq \label{R function with depth n B}
  & & \mathcal R(\vec\theta) \leq C' K L n^{1/r}\leq C' \delta^{-270/18}
   \, \cdotp
   \log (\delta^{-1})\,\cdotp \delta^{-175/r}\leq C'' \delta^{-16},\\
   & &\corrtext{\e_0=C\delta},
\eeq
where $r<\infty$ is arbitrary and $C,C'$ and $C''$ depend on $r$. 
Consider now the case when a priori distribution of the data  is supported in the set of the Neumann-to-Dirichlet maps corresponding to the wave speeds
$c\in   {\mathcal V^3}$. Then the above
 implies, in terms of Definition \ref{def:
  approximation}, that the map $\Lambdamapc\to c$, solving the inverse
problem for the wave equation, can be approximated with accuracy  %\HOX{Modify this explanation}
$\e_0=C\delta$ by a neural network $\Lambdamap\to
F_{\vec\theta}(\Lambdamap)$ where ${\vec\theta}$ has the sparsity bound
$R_0 \leq C''\delta^{-16}$. Note that here we do not require that the absolute values of the components
of the vector ${\vec\theta}$ are bounded by one. However, this happens if $T$ or the parameters of the set ${\mathcal V^3}$ 
are sufficiently small.}

\medskip\medskip

\noindent
The above worst case estimate gives also an upper bound how well an
optimally trained neural network performs. However, if one is
interested in reconstructing a wave speed $c$ in a subset ${\rtext \mathcal W} \subset
{\mathcal V^3}$ and uses training data sampled from the set ${\rtext \mathcal W}$, then
the trained network is by our analysis close to an optimal neural
network that will most likely perform better than the neural network
with a priori determined parameters approximating the boundary control
method for three reasons: First, the optimal neural network is
optimized to the subset ${\rtext \mathcal W}$, not the larger class ${\mathcal
  V^3}$. Second, the neural network is based on theoretical estimates
that prove worst case errors in all substeps. Third, the algorithm with
a priori determined parameters does not estimate the average error in
the reconstruction, but absolute error and thus the optimal neural
network that optimizes the expected error may perform better.

\appendix   

\renewcommand{\theequation}{\thesection.\arabic{equation}}
\renewcommand\thesection{\Alph{section}}
\setcounter{equation}{0}
   
\section{Time reversal algorithm with sparse regularization}
\label{appendix 1}

In this appendix we consider how the results in \cite{bingham2008,
  korpela2016} can be generealized in the case when one regularizes
the $\ell^1$ term of the source term.

Let $B: \ell^1\to L^2(0, T)$ be an operator such that there is $C_0>0$
such that $\|Bf\|_{ L^2(0, T)}\leq C_0\|f\|_{\ell^1}$.  For example,
when $s>1/2$, the Besov space $B^s_{11}(S^1)$ on the unit circle $S^1$
is subset of $L^2(S^1)$ (that is isomorphic to
$L^2([0,T])$). Moreover, there is an isomorphism $B:\ell^1\to
B^s_{11}(S^1)$ of the form \eqref{B-definition}, where $\psi_j$ are
wavelets \cite{triebel2008}.

\begin{theorem} \label{thm_minimization}
Assume that $B(\ell^1)\subset L^2(0, T)$ is a dense subset. Let $r
\in [0, T]$ and $\alpha > 0$.  Let us define
\begin{equation}
\label{muumipappa}
S_r=\{f \in L^2(0,T)\,:\,\hbox{supp}(f) \subset [T - r, T]\}.
\end{equation}
Then the regularized minimization problem 
\begin{equation}
\label{eq_minimization_regularized}
\min_{f \in \ell^1}(\bra Bf, K Bf\cet _{L^2(0, T)} - 2\bra Bf, \Phi_T\cet _{L^2(0, T)} + \alpha \norm{f}_{\ell^1} )
\end{equation}
has a minimizer $ f_{\alpha,r}$. Moreover $u^{Bf_{\alpha,r}}(T)$
converges to the indicator function of the domain of influence,
\begin{align}
&\lim_{\alpha\to 0}\norm{u^{Bf_{\alpha,r}}(T) - 1_{M(r)}}_{L^2(M;dV)} =0.
\end{align}
\end{theorem}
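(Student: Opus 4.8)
The plan is to reduce the objective to a completed-square form and then run a Tikhonov-type argument, with the sparsity penalty $\|\cdot\|_{\ell^1}$ replacing the $L^2$ penalty used in \cite{bingham2008,korpela2016}. Writing $g=Bf$ and applying the Blagovestchenskii identities \eqref{Blago 1}--\eqref{Blago 2}, one gets $\langle Bf,KBf\rangle_{L^2(0,T)}=\|u^{Bf}(T)\|_{L^2(M)}^2$ and $\langle Bf,\Phi_T\rangle_{L^2(0,T)}=\langle u^{Bf}(T),1\rangle_{L^2(M)}$. The role of $S_r$ is essential here: by finite speed of propagation a source with $Bf\in S_r$ produces a wave $u^{Bf}(T)$ supported in the domain of influence $M(r)$, so that $\langle u^{Bf}(T),1\rangle_{L^2(M)}=\langle u^{Bf}(T),\indicator_{M(r)}\rangle_{L^2(M)}$; without this support constraint the quadratic-plus-linear part is unbounded below over the infinite domain $M$. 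Restricting the minimization to $\{f\in\ell^1:Bf\in S_r\}$, the objective in \eqref{eq_minimization_regularized} equals $\|u^{Bf}(T)-\indicator_{M(r)}\|_{L^2(M)}^2-\|\indicator_{M(r)}\|_{L^2(M)}^2+\alpha\|f\|_{\ell^1}$, so minimizing it is equivalent to minimizing $G_\alpha(f):=\|u^{Bf}(T)-\indicator_{M(r)}\|_{L^2(M)}^2+\alpha\|f\|_{\ell^1}$.

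For existence I would use the direct method. The quadratic term is nonnegative, so any minimizing sequence $f_k$ obeys $\alpha\|f_k\|_{\ell^1}\le G_\alpha(f_k)\le C$, i.e. it is bounded in $\ell^1$. Since $\ell^1=(c_0)^{\ast}$ has separable predual, bounded balls of $\ell^1$ are weak-$\ast$ sequentially compact, so after passing to a subsequence $f_k\overset{\ast}{\rightharpoonup}f_{\alpha,r}$. The norm $\|\cdot\|_{\ell^1}$ is weak-$\ast$ lower semicontinuous; moreover $B$ maps weak-$\ast$ convergent sequences to weakly convergent ones in $L^2(0,T)$ (using that the $\psi_j$ are weakly null, as for a frame or the wavelet basis), the constraint $Bf\in S_r$ survives the limit, and $g\mapsto u^g(T)$ is bounded and linear, so the data term is weakly lower semicontinuous. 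Hence $G_\alpha(f_{\alpha,r})\le\liminf_k G_\alpha(f_k)=\inf G_\alpha$ and $f_{\alpha,r}$ is a minimizer.

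For the limit $\alpha\to0$ I would combine approximate controllability with the minimality of $f_{\alpha,r}$. The controllability input is that $\{u^g(T):g\in S_r\}$ is dense in $L^2(M(r))$, which is the unique continuation property underlying the boundary control method and is the same PDE fact used in \cite{bingham2008,korpela2016}. Combined with the density of $B(\ell^1)$ in $L^2(0,T)$, for every $\varepsilon>0$ there is $f_\varepsilon$ with $Bf_\varepsilon\in S_r$ and $\|u^{Bf_\varepsilon}(T)-\indicator_{M(r)}\|_{L^2(M)}<\varepsilon$. Minimality then gives $\|u^{Bf_{\alpha,r}}(T)-\indicator_{M(r)}\|^2\le G_\alpha(f_{\alpha,r})\le G_\alpha(f_\varepsilon)\le\varepsilon^2+\alpha\|f_\varepsilon\|_{\ell^1}$, so that $\limsup_{\alpha\to0}\|u^{Bf_{\alpha,r}}(T)-\indicator_{M(r)}\|_{L^2(M)}^2\le\varepsilon^2$; letting $\varepsilon\to0$ yields the stated convergence.

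I expect the main obstacle to be the approximate controllability step: proving that waves launched by sources in $S_r$ are dense in $L^2(M(r))$, and reconciling this with the sparse synthesis operator $B$, that is, approximating the abstract controlling source by an element of $B(\ell^1)$ while keeping its support in $[T-r,T]$ so that the completed-square reduction remains valid. A secondary subtlety, specific to the $\ell^1$ setting and absent in the $L^2$-regularized references, is the non-reflexivity of $\ell^1$ in the existence step, which forces the use of the weak-$\ast$ topology together with the weak-null property of the $\psi_j$ in place of a routine reflexive weak-compactness argument.
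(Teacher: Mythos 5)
Your proposal is correct, and its skeleton coincides with the paper's Appendix~A proof: the same Blagovestchenskii completed-square rewriting of the objective as $\norm{u^{Bf}(T)-\indicator_{M(r)}}_{L^2(M;dV)}^2-\norm{\indicator_{M(r)}}_{L^2(M;dV)}^2+\alpha\norm{f}_{\ell^1}$ (with finite speed of propagation forcing $\operatorname{supp} u^{Bf}(T)\subset M(r)$), the same direct method via Banach--Alaoglu in $\ell^1=(c_0)^{*}$ with weak-$*$ lower semicontinuity of the $\ell^1$ norm checked coordinatewise, and, for $\alpha\to 0$, the identical comparison $E(f_{\alpha,r})\le E(f_\epsilon)$ against an approximant supplied by Tataru's approximate controllability plus density of $B(\ell^1)$. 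Your explicit restriction to $\{f:\ Bf\in S_r\}$ also matches the paper's proof, which silently inserts the projection $P_s$ (i.e.\ $P_r$) into the energy even though the theorem's displayed minimization omits it. The one genuine divergence is how the data term passes to the limit: the paper exploits the one-order smoothing of the forward map, $U_T:L^2(0,T)\to H^1(M)$ bounded, so that $U_T:L^2(0,T)\to L^2(M)$ is compact and $u^{P_sBf_j}(T)$ converges strongly along a subsequence, whereas you invoke weak lower semicontinuity of the convex, strongly continuous functional $g\mapsto\norm{u^{g}(T)-\indicator_{M(r)}}_{L^2(M;dV)}^2$, which avoids compactness entirely and would survive even without the smoothing property of $U_T$. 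Note, however, that both routes consume the same unstated hypothesis in order to identify the limit: that $B$ is weak-$*$-to-weak continuous, i.e.\ that $(\langle \psi_j,h\rangle_{L^2})_{j}\in c_0$ for every $h\in L^2(0,T)$; you flag this (weak nullity of the $\psi_j$, automatic for the orthogonal system of the main text, though not literally among the appendix hypotheses on $B$), while the paper assumes it tacitly when asserting that the strong subsequential limit of $u^{P_sBf_j}(T)$ is $u^{P_sBf_\infty}(T)$. So your variant is marginally more elementary at that step and more explicit about the hypothesis it uses; the paper's compactness argument buys strong convergence of the waves, which is more than lower semicontinuity actually requires.
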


\begin{proof}[Proof of Theorem \ref{thm_minimization}.]
Let $\alpha>0$ and let $f\in \ell^1$. We define the energy function 
\begin{equation}
\label{appe1}
E(f) := \bra P_sBf, KP_sBf\cet _{L^2(0,T)}  - 2 \bra P_sBf, \Phi_T\cet _{L^2(0,T)} + \alpha\norm{f}_{\ell^1}.
\end{equation}
The finite speed of wave propagation
implies $\hbox{supp}\,(u^{P_sBf}(T)) \subset M(r)$. 
Moreover, by the
 Blagovestchenskii formula we have 
\begin{equation}
\label{eq:energy_functional}
E(f) = \norm{u^{P_sBf}(T) - 1_{M(r)}}_{L^2(M; dV)}^2 - \norm{1_{M(r)}}_{L^2(M; dV)}^2 + \alpha \norm{f}_{\ell^1}.
\end{equation}
Let $(f_j)_{j=1}^\infty \subset \ell^1$ be such that 
\begin{equation*}
\lim_{j \to \infty} E(f_j) = \inf_{f \in \ell^1} E(f)=:E^*
\end{equation*}
Then 
\begin{equation*}
\alpha \norm{f_j}_{\ell^1} \le E(f_j) + \norm{1_{M(r)}}_{L^2(M; dV)}^2
\leq E^*+\hbox{vol}(M)=E^{**},
\end{equation*}
and we see that $(f_j)_{j=1}^\infty$ is bounded in $ \ell^1$ and satisfies $\|f_j\|_{\ell^1}\leq \alpha^{-1}E^{**}.$

The space $\ell^1$ is the dual of the space $c_0$ of sequences
converging to zero. Thus by Banach-Alaoglu theorem, %As $S_r$ is a
Hilbert space, there is a subsequence of $(f_j)_{j=1}^\infty$ that
weak${}^*$ converges in $\ell^1$.  Let us denote the limit by
$f_\infty \in \ell^1$ and the subsequence still by
$(f_j)_{j=1}^\infty$.

When $y=(y_i)_{i=1}^\infty\in \ell^1$, we denote
$p_k(y)=(y_i)_{i=1}^k\in \R^k$. Now, we see that as
$(f_j)_{j=1}^\infty$ weak${}^*$ converges to $f_\infty$ in $\ell^1$,
we have for all vectors $g_k=(\delta_{jk})_{j=1}^\infty \in c_0$ that
$(f_j,g_k)_{\ell^1,c_0} \to (f_\infty,g_k)_{\ell^1,c_0}$ as $j\to
\infty$. Hence we see that $p_k(f_j)$ converge to
$p_k(f_\infty)$ and for all $k$ and
\[
   \sum_{i=1}^k |(f_\infty)_i|\leq \lim_{j\to\infty} \sum_{i=1}^k |(f_j)_i|\leq 
 \norm{f_j}_{\ell^1}\leq  \alpha^{-1}E^{**}.
\]
Taking limit $k\to\infty$ we see that $\|f_\infty\|_{\ell^1}\leq
 \alpha^{-1}E^{**}$.

The map $U_T:L^2( 0, T) \to H^{1}(M)$, mapping $U_T:h \mapsto u^h(T)$, is
bounded. The embedding $I:H^{1}(M)\hookrightarrow L^2(M)$ is compact
and thus $U_T$ is a compact operator
\begin{equation*}
   U_T:L^2(0, T) \to L^2(M) .
\end{equation*}
As $P_sBf_j$ is a bounded sequence in $L^2(0, T)$, we see that by
replacing the sequence $(f_j)_{j=1}^\infty$ by its suitable
subsequence, we can assume that $u^{P_sBf_j}(T) \to
u^{P_sBf_\infty}(T)$ in $L^2(M)$ as $j \to \infty$.

The above yields that
\begin{align*}
&E(f_\infty) = \lim_{j \to \infty} \norm{u^{P_sBf_j}(T) - 1_{M(r)}}_{L^2(M; dV)}^2 - \norm{1_{M(r)}}_{L^2(M; dV)}^2 + \alpha \norm{f_\infty}_{\ell^1}
\\\nonumber&\le \lim_{j \to \infty} \norm{u^{P_sBf_j}(T) - 1_{M(r)}}_{L^2(M; dV)}^2 - \norm{1_{M(r)}}_{L^2(M; dV)}^2 + \alpha 
\liminf_{j \to \infty} \norm{f_j}_{\ell^1}
\\\nonumber&= \liminf_{j \to \infty} E(f_j) = \inf_{f \in S_r} E(f),
\end{align*}
and thus $f_\infty \in \ell^1$ is a minimizer for (\ref{appe1}). 

As $B(\ell^1)\subset L^2(0, T)$ is a dense subset, we see by using
Tataru's approximate controllability theorem that
\begin{equation*}
\{ u^{P_rBf}(T) \in L^2(M(r));\ f \in \ell^1 \}
\end{equation*}
is dense in $L^2(M(r))$. 
{Let $\delta > 0$. For $\epsilon=\frac{\delta^2}{2}$, let us choose} 
$f_\epsilon \in \ell^1$,  such that
\begin{equation}
\label{rudiw}
\norm{u^{P_rBf_\epsilon}(T) - 1_{M(r)}}_{L^2(M; dV)}^2 \le \epsilon.
\end{equation}
Using (\ref{eq:energy_functional}) we have
\begin{align*}
\norm{u^{P_rBf_{\alpha,r}}(T) - 1_{M(r)}}_{L^2(M; dV)}^2 \le  
E(f_{\alpha,r})+\norm{1_{M(r)}}_{L^2(M; dV)}^2.
\end{align*}
Because $E(f_{\alpha,r}) \le E(f_\epsilon)$ 
we have
\begin{align*}
\norm{u^{P_rBf_{\alpha,r}}(T) - 1_{M(r)}}_{L^2(M; dV)}^2 & \le  
 \norm{u^{P_rBf_\epsilon}(T) - 1_{M(r)}}_{L^2(M; dV)}^2 + \alpha \norm{f_\epsilon}_{\ell^1}.
\\\nonumber& \le \epsilon + \alpha \norm{f_\epsilon}_{\ell^1}.
\end{align*}
{When $0<\alpha<\alpha_\delta=\frac{\delta^2}{2\norm{f_\epsilon}_{\ell^1}}$,
we see that}
\begin{align*}
{\norm{u^{P_rBf_{\alpha,r}}(T) - 1_{M(r)}}_{L^2(M; dV)} \le
(\epsilon + \alpha \norm{f_\epsilon}_{\ell^1})^{\frac{1}{2}}=\delta}.
\end{align*}
Thus
\begin{align*}
{\lim_{\alpha\to 0}\,\norm{u^{P_rBf_{\alpha,r}}(T) - 1_{M(r)}}_{L^2(M; dV)} 
=0.}
\end{align*}
\end{proof}

\setcounter{equation}{0}
\section{Conditional expectation as a projector}

In this appendix, we recall the definition and the properties of  conditional expectations using $\sigma$-algebras
discussed in detail in \cite[Ch. 5]{Kallenberg} and \cite{Dudley}.

 Let $(\Omega,\Sigma,\Prob)$ be an complete probability space and
 ${Z} :\ \Omega \to \R^m$ be  a random variable. Below, we consider the case when $Z$ is $\R$-valued,
 that is, $m=1$, but the discussion below generalizes in a straight forward way for $m\in \mathbb Z_+$. 
 
 Let
$\mathcal{B}_{Z} \subset \Sigma$ be a $\sigma$-algebra generated by the random variable ${Z}$, that is, the smallest $\sigma$-algebra that contains the sets ${Z}^{-1}(S) \subset \Omega$, where $S\subset \R$ is an open set We recall that when $F:\Omega \to \R$ satisfies $F=F(\omega) \in L^1(\Omega;d\Prob)$, then $\Expec(F|\mathcal B_{Z})(\omega)$ is the $\mathcal B_{Z}$-measurable random variable that satisfies
 \beq\label{def: conditional expectation}
    \int_A \Expec (F|{\mathcal B_{Z}} )(\omega) \, d\Prob(\omega)
    = \int_A F(\omega) \, d\Prob(\omega)
 \eeq
 for all sets $A\in \mathcal B_{Z}$.

Roughly speaking, $\Expec({F}\,| \B_{Z})$ denotes the expectation of  a random variable 
%{\color{black}
%[isn't this a function]}
${F}=F(\omega)$ under the condition that  ${Z}$ is known. More precisely, by \cite[Section 10.1 and Thm.\ 4.2.8]{Dudley},
 there is a measureable function $g_{F}:\R\to \R$ such that 
\beq
   \Expec({F}\,| \mathcal{B}_{{Z}}) = g_{F}({Z}) = g_{F}({Z}(\omega)) ,\quad \Prob\hbox{-a.e.}
\eeq
that is, $\Expec({F}\,| \mathcal{B}_{Z})$ can be considered as deterministic function of ${Z}$.
To simplify notations, one uses for the conditional expectation of the random variable ${F}$, under the condition that ${Z}$ is given, the notation
\beq
   \Expec({F}\,| \mathcal{B}_{Z})=\Expec({F}\,| \,{Z}) ,
\eeq
where the right-hand side is in fact equal to $g_{F}({Z})$. We emphasize that as ${Z}$ is a random variable, also $\Expec({F}\,| \B_{Z})=\Expec({F}\,| \,{Z})$ is  a random variable.

Let $H = L^2(\Omega;\mathcal{B}_{Z},d\Prob)$ be the set of $\R$-valued functions $u = u(\omega)$ that satisfy $u \in L^2(\Omega;\Sigma,d\Prob)^{}$ and
are $\B_{Z}$-measurable.
%, that is, the set of functions $u$ for which $u^{-1}(S) \in \mathcal{B}_{Z}$ for all open sets $S \subset \R^n$. 
Observe that $H \subset L^2(\Omega;\Sigma,d\Prob)^{}$ is a closed subspace of the Hilbert space $L^2(\Omega;\Sigma,d\Prob)^{}$.

By \cite[Thm.\ 4.2.8]{Dudley}, for any $u \in L^2(\Omega;\mathcal{B}_{Z},d\Prob)^{}$ there is a Borel-measurable function $g$ such that $u(\omega) = g({Z}(\omega))$, that is, $u = g \circ {Z}$, $\Prob$-a.e. in $\Omega$.

By \eqref{def: conditional expectation}, we have 
 \beq\label{properties of conditional expectation}
    \bra \Expec (F|{\mathcal B_{Z}} ),g(\omega)\cet_{L^2(\Omega;\Sigma,d\Prob)} 
    =   \bra F,g(\omega)\cet_{L^2(\Omega;\Sigma,d\Prob)} 
 \eeq
 for indicator functions $g={\bf 1}_A$ with all sets $A\in \mathcal B_{Z}$.
As such indicator functions span a dense set in $H$, we have that 
\eqref{properties of conditional expectation} holds for all $g\in H$. 
As $\Expec (F|{\mathcal B_{Z}} )(\omega) \in H$, this yields that
   \beq\label{conditional expectation is projector}
 \Expec (F|{\mathcal B_{Z}} )=P_HF
  \eeq
where 
 \beq
   P_H :\ L^2(\Omega;\Sigma,d\Prob) \to L^2(\Omega;\Sigma,d\Prob)
\eeq 
is the orthogonal projector onto the set $H=L^2(\Omega;\mathcal{B}_{Z},d\Prob)$, that is,
Ran$(P_H) = H$. 
In the main text we use extensively that fact that %We also have
\beq
   P_H {F} = \hbox{arg} \min \|{F}-u\|_{L^2(\Omega;\Sigma,d\Prob)^{}}^2
\eeq
subject to the condition $u\in H=L^2(\Omega;\B_{Z},d\Prob)$.

\subsection*{Acknowledgements}

MVdH gratefully acknowledges support from the Department of Energy under grant DE-SC0020345, the Simons Foundation under the MATH + X program, and the corporate members of the Geo-Mathematical Imaging Group at Rice University. CW was funded by Total. ML was supported by Finnish Centre of Excellence in Inverse Modelling and Imaging and Academy of Finland grants 284715, 312110. The authors are indebted to an anonymous referee for many suggestions that greatly improved the paper.

\bibliographystyle{siam}
\bibliography{MLIPref}

\end{document}